\setlist[enumerate]{leftmargin=.5in}
\setlist[itemize]{leftmargin=.5in}
\numberwithin{equation}{section}
\newtheorem{theorem}{Theorem}
\numberwithin{theorem}{section}
\newtheorem{proposition}[theorem]{Proposition}
\newtheorem{lemma}[theorem]{Lemma}
\newtheorem{remark}[theorem]{Remark}
\theoremstyle{definition}
\newtheorem{definition}[theorem]{Definition}
\newtheorem{example}[theorem]{Example}
\newtheorem{assumption}[theorem]{Assumption}
\newcommand{\tg}{\tilde{g}}
\newcommand{\rr}{\mathbb{R}}
\newcommand{\Fc}{\mathcal{F}}
\newcommand{\Oc}{\mathcal{O}}
\newcommand{\supp}{\operatorname{supp}}
\newcommand{\WF}{\mathrm{WF}}                         
\newcommand{\WFL}{\mathrm{WF}_{L(\theta,p)}}
\newcommand{\WFLo}{\mathrm{WF}_{L(\theta_0,p_0)}}
\newcommand{\WFpto}{\mathrm{WF}_{(\theta_0,p_0)}}
\newcommand{\NL}{N(L\thp)}
\newcommand{\NLo}{N(L\thpo)}
\newcommand{\RA}{R_A}
\newcommand{\LA}{\mathcal{L}_A}
\newcommand{\Rp}{R_\psi}
\newcommand{\Lp}{\mathcal{L}_\psi}
\newcommand{\thperp}{\theta^\perp}
\newcommand{\thoperp}{\theta_0^\perp}
\newcommand{\thbar}{\overline\theta}
\newcommand{\thp}{{(\theta,p)}}
\newcommand{\thpo}{{(\theta_0,p_0)}}
\newcommand{\norm}[1]{\left\lVert#1\right\rVert}      
\newcommand{\abs}[1]{\left|#1\right|}                 
\newcommand{\paren}[1]{\left(#1\right)}               
\newcommand{\bparen}[1]{\left[#1\right]}               
\newcommand{\sparen}[1]{\left\{#1\right\}}		      
\renewcommand{\epsilon}{\varepsilon}
\renewcommand{\rho}{\varrho}
\newcommand{\vp}{{\varphi}}
\newcommand{\vpp}{{(\varphi,p)}}
\newcommand{\vppo}{{(\varphi_0,p_0)}}
\newcommand{\tA}{\tilde{A}}
\newcommand{\tL}{\tilde{L}}
\newcommand{\lA}{\overline{A}}
\renewcommand{\tilde}{\widetilde}
\newcommand{\LD}{L^2(D)}
\newcommand{\Lloc}{L^2_{\text{loc}}}
\definecolor{jf}{rgb}{1,0,1}
\newcommand{\xb}{x_b}
\newcommand{\vpo}{\varphi_0}
\renewcommand{\th}{\theta}
\newcommand{\tho}{\theta_0}
\newcommand{\po}{p_0}
\newcommand{\So}{S^1}
\newcommand{\Sor}{S^1\times \mathbb{R}}
\renewcommand{\th}{\theta}
\newcommand{\one}{\mathds{1}}
\newcommand{\onea}{\one_{A}}
\newcommand{\rtwo}{{{\mathbb R}^2}}
\newcommand{\rn}{{{\mathbb R}^n}}
\newcommand{\nn}{\mathbb{N}}
\newcommand{\st}{\hskip 0.3mm : \hskip 0.3mm}
\newcommand{\be}{\begin{equation}}
\newcommand{\ee}{\end{equation}}
\newcommand{\bea}{\begin{eqnarray}}
\newcommand{\eea}{\end{eqnarray}}
\newcommand{\bean}{\begin{eqnarray*}}
\newcommand{\eean}{\end{eqnarray*}}
\newcommand{\bel}[1]{\begin{equation}\label{#1}}
\newcommand{\eel}[1]{{\label{#1}\end{equation}}}
\newcommand{\intt}{{\operatorname{int}}}
\newcommand{\ext}{{\operatorname{ext}}}
\newcommand{\cl}{{\operatorname{cl}}}
\newcommand{\bd}{{\operatorname{bd}}}
\newcommand{\smo}{\setminus\boldsymbol{0}}
\newcommand{\xio}{{\xi_0}}
\newcommand{\xo}{{x_0}}
\newcommand{\dx}{\mathrm{dx}}
\newcommand{\dpp}{\mathrm{dp}}
\newcommand{\dth}{\mathrm{d}\theta}
\newcommand{\dt}{\mathrm{d}t}
\newcommand{\omo}{\om_0}
\newcommand{\om}{\omega}
\newcommand{\etao}{\eta_0}
\numberwithin{equation}{section}
\title{Analyzing Reconstruction Artifacts from Arbitrary Incomplete
X-ray CT Data\thanks{Submitted to the editors DATE.  \funding{This
work was partially funded by: Innovation Fund Denmark and Maersk Oil
and Gas (Borg);  HC \O rsted Postdoc programme, co-funded by Marie Curie Actions at the Technical University of
Denmark (Frikel); ERC Advanced Grant No.\  291405 and EPSRC Grant
EP/P02226X/1 (J{\o}rgensen); Otto M{\o}nsteds Fond, Tufts FRAC and NSF
grants DMS 1311558 and DMS 1712207 (Quinto)}}}
\title[Artifacts from Arbitrary Incomplete CT Data]{Analyzing
Reconstruction Artifacts from Arbitrary Incomplete X-ray CT Data}
\author{Leise Borg$^1$}
  \author{J{\"u}rgen Frikel$^2$}
 \author{Jakob~Sauer J{\o}rgensen$^3$}
 \author{Eric~Todd Quinto$^4$}
\begin{document}

\maketitle 

\footnotetext[1]{Department of Computer Science, University of Copenhagen (lebo@di.ku.dk)}
\footnotetext[2]{Department of Computer Science and Mathematics,
OTH Regensburg, (juergen.frikel@oth-regensburg.de). Her
work was partially funded by: Innovation Fund Denmark and Maersk Oil
and Gas }

\footnotetext[3]{School of Mathematics, University of Manchester,
Manchester, M13 9PL, United Kingdom
(jakob.jorgensen@manchester.ac.uk). HC \O rsted Postdoc programme, co-funded by Marie Curie Actions at the Technical University of
Denmark (Frikel); ERC Advanced Grant No.\  291405 and EPSRC Grant
EP/P02226X/1 }

\footnotetext[4]{Department of Mathematics, Tufts University, Medford,
MA 02155 USA, (todd.quinto@tufts.edu) partial support from NSF grants
DMS 1311558 and 1712207, as well as support from the Otto M{\o}nsteds
Fond, DTU, and Tufts University Faculty Research Awards Committee}

\noindent\textbf{Keywords:} X-ray tomography, incomplete data tomography, limited angle
  tomography, region of interest tomography, reconstruction artifact,
  wavefront set, microlocal analysis, Fourier integral
  operators
 
\noindent\textbf{2010 AMS subject clasifications: }44A12, 92C55,
35S30, 58J40

%
%

\begin{abstract}
  This article provides a mathematical analysis of singular
    (nonsmooth) artifacts added to reconstructions by filtered
    backprojection (FBP) type algorithms for X-ray CT with arbitrary
  incomplete data.  We prove that these singular artifacts arise
  from points at the boundary of the data set.  Our results show that,
  depending on the geometry of this boundary, two types of artifacts can
  arise: object-dependent and object-independent artifacts.
  Object-dependent artifacts are generated by singularities of the object
  being scanned and these artifacts can extend along lines.  They
  generalize the streak artifacts observed in limited-angle
  tomography. Object-independent artifacts, on the other hand, are
  essentially independent of the object and take one of two forms: streaks
on lines if the boundary of the data set is not smooth at a point and
  curved artifacts if the boundary is smooth locally.  We prove that
    these streak and curve artifacts are the only singular artifacts that
    can occur for FBP in the continuous case. In addition to the geometric
  description of artifacts, the article provides characterizations of their
  strength in Sobolev scale in certain cases.  The results of this article
  apply to the well-known incomplete data problems, including limited-angle
  and region-of-interest tomography, as well as to unconventional X-ray CT
  imaging setups that  arise in new practical applications.
  Reconstructions from simulated and real data are analyzed to illustrate
  our theorems, including the reconstruction that motivated this work---a
  synchrotron data set in which artifacts appear on lines that have no
  relation to the object.

\end{abstract}

\date{\today}






\section{Introduction}\label{sect:intro}

Over the past decades computed tomography (CT) has established itself
as a standard imaging technique in many areas, including materials
science and medical imaging.  One collects X-ray measurements from
many different directions (lines) that are distributed all around the
object.  Then one reconstructs a picture of the interior of the object
 using an appropriate mathematical algorithm.  In classical
tomographic imaging setups, this procedure works very well because the
data can be collected all around the object, i.e., the data are
\emph{complete}, and standard reconstruction algorithms, such as filtered
backprojection (FBP), provide accurate reconstructions
\cite{Na:book,SidkyPanVannier2009}. However, in many CT problems, some
data are not available, and this leads to incomplete (or limited) data
sets. The reasons for data incompleteness might be patient related
(e.g., to decrease dose) or practical (e.g., when the scanner cannot
image all of the object, as in digital breast tomosynthesis).  

Classical incomplete data problems have been studied from the
beginning of tomography, including \emph{limited-angle tomography},
where the data can be collected only from certain view-angles
\cite{Louis:1980LA,Ka1997:limited-angle}; \emph{interior} or
\emph{region-of-interest (ROI) tomography}, where the X-ray
measurements are available only over lines intersecting a 
subregion of the object \cite{FRS, RZ, KR1996}; or \emph{exterior
tomography}, where measurements are available only over all lines outside a 
subregion \cite{Na1980, Q1998}.  

In addition, new scanning methods generate novel data sets, such
as the synchrotron experiment \cite{BJS-techreport,Borg2017} in
Section \ref{sect:synchrotron} that motivated this research. That
reconstruction, in Figure \ref{fig:streaks}, includes dramatic streaks
that are independent of the object and were not described in the
mathematical theory at that time but are explained by our main
theorems. A thorough practical investigation of this particular
problem was recently presented in \cite{Borg2017}.

\begin{figure}[t]
  \begin{center} \includegraphics[width=0.3\textwidth,
height=0.3\textwidth]{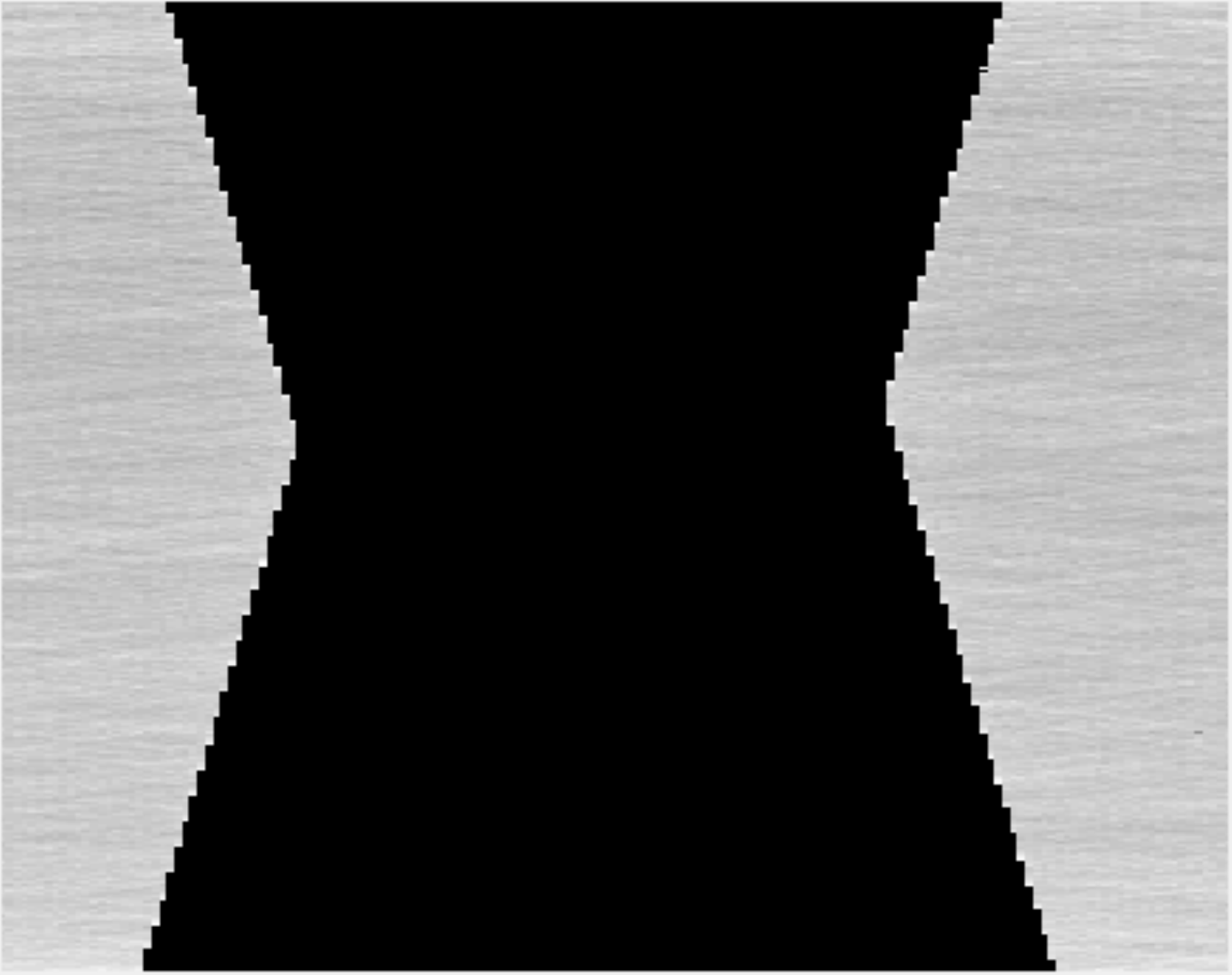} \quad
\includegraphics[width=0.3\textwidth,
height=0.3\textwidth]{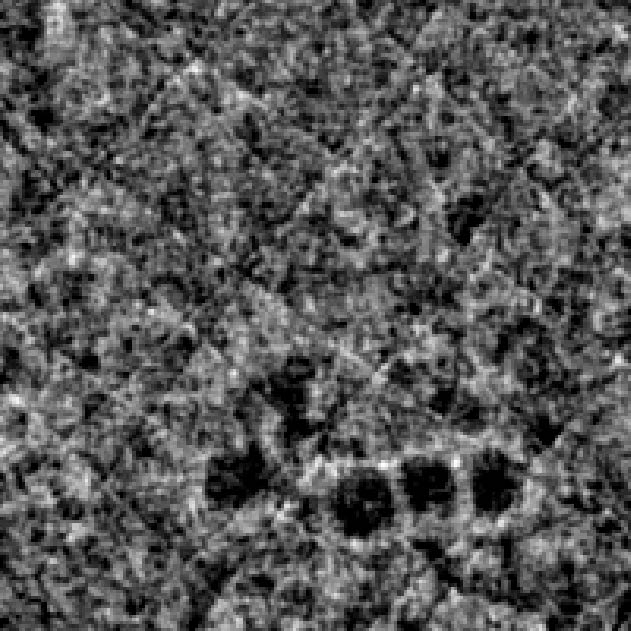}
\end{center}
\caption{\textit{Left:} A small part of the sinogram of the chalk
sample analyzed in Section \ref{sect:synchrotron}.  Notice that
boundary of the data set in this enlargement is jagged.
\textit{Right:} Small central section of a reconstruction of the
chalk.  Notice the streak artifacts over lines in the reconstruction.
Monochromatic parallel beam data were taken of the entire cross
section of the chalk over $1800$ views covering $180$ degrees, and
there were $2048\times 2048$ detector elements with a $0.5$ mm field
of view, providing micrometer resolution of the sample.  Data
\cite{Sorensen2016} obtained, with thanks from the Japan Synchrotron
Radiation Research Institute from beam time on beamline BL20XU of
SPring-8 (Proposal 2015A1147).  For more details, see Section
\ref{sect:synchrotron} and \cite[\copyright IOP Publishing.
Reproduced by permission of IOP Publishing.  All rights
reserved]{Borg2017}.}\label{fig:streaks}\end{figure}

Regardless of the type of data incompleteness, in most practical CT
problems a variant of FBP is used on the incomplete data to produce
reconstructions \cite{SidkyPanVannier2009}.  It is well-known that
incomplete data reconstruction problems that do not incorporate a
priori information (as is the case in all FBP type reconstructions)
are severely ill-posed (e.g., \cite{Lo1986} or \cite[Section
6]{Natterer01} for limited-angle CT).  Consequently, certain image
features cannot be reconstructed reliably \cite{Quinto93} and, in
general, artifacts, such as the limited-angle streaks in Figure
\ref{fig:LA-FBP} in Section \ref{sect:numerical} can occur.
Therefore, reconstruction quality suffers considerably, and this
complicates the proper interpretation of images. 

We  consider the continuous case, so we do not evaluate discretization
errors. By \emph{artifacts}, we mean nonsmooth image features
(singularities), such as streaks, that are added to the reconstruction by
the algorithm and are not part of the original object (see Definition
\ref{def:artifact+sing}).

\subsection{Related research in the mathematical literature} Our work
is based on microlocal analysis, a deep theory that describes how
singularities are transformed by Fourier integral operators, such as
the X-ray transform. Early articles using microlocal analysis in
tomography include \cite{Palamodov-1986}, which considers nonlinear
artifacts in X-ray CT, \cite{Quinto93}, which characterizes visible
and invisible singularities from X-ray CT data, \cite{GU1989} which
provided a general microlocal framework for admissible complexes, and
\cite{KLM} which considers general measures on lines in $\rtwo$.
Subsequently, artifacts were extensively studied in the context of
limited-angle tomography, e.g., \cite{Ka1997:limited-angle} and then
\cite{FrikelQuinto2013}. The strength of added artifacts in
limited-angle tomography was analyzed in \cite{Nguyen2015ip}.  Similar
characterizations of artifacts in limited-angle type reconstructions
have also been derived for the generalized Radon line and hyperplane
transforms as well as for other Radon transforms (such as circular and
spherical Radon transform), see
\cite{FrikelQuinto2015,FrikelQuinto-hyperplane, Nguyen-spherical,
Barannyk2016ip,Nguyen2017faa}.

Metal in objects can corrupt CT data and create dramatic streak
artifacts \cite{CT-Artifacts-BF:2012}.  This can be dealt with as an
incomplete data problem by excluding data over lines through the
metal.  Recently, this problem has been mathematically modeled in a
sophisticated way using microlocal analysis in \cite{ParkChoiSeo:2017,
Rigaud:2017,PUW-metal}.  A related problem is studied in
\cite{CPWWS-susceptibility, PalamodovSusceptibility-2016, PUW-streaks},
where the authors develop a streak reduction method for quantitative
susceptibility mapping.  Moreover, microlocal analysis has been used
to analyze properties of related integral transforms in pure and
applied settings \cite{BQ1987,GU1989, SU:SAR2013, FLU, Q2006:supp}.

\subsection{Basic mathematical setup and our results}

We use microlocal analysis to present a unified approach to analyze
reconstruction artifacts for arbitrary incomplete X-ray CT data that
are caused by the choice of data set.  We not only consider all of the
above mentioned classical incomplete data problems but also emerging
imaging situations with incomplete data.  We provide a geometric
characterization of the artifacts and we prove it describes all
singular artifacts that can occur for FBP type algorithms in the
continuous case. 


If $f$ is the density of the object to be reconstructed, then each CT
measurement is modeled by a line integral of $f$ over a line in the
data set.  As we will describe in Section \ref{sect:notation},
we parametrize lines by $\thp\in \Sor$, and the CT measurement of $f$
over the line $L\thp$ is denoted $Rf\thp$.  With complete data, where
$Rf\thp$ is given over all $\thp\in \Sor$, accurate reconstructions can 
be produced by the FBP algorithm.  In incomplete data CT
problems, the data are taken over lines $L\thp$ for $\thp$ in a proper
subset, $A$, of $\Sor$
and, even though FBP 
is designed for complete data, it is still one of the preferred 
reconstruction methods in practice, see \cite{SidkyPanVannier2009}. 
As a result, incomplete data CT reconstructions
usually suffer from artifacts. 


We prove that incomplete data artifacts arise from points at the
boundary or ``edge'' of the data set, $\bd(A)$, and we show that there
are two types of artifacts: \emph{object-dependent} and
\emph{object-independent} artifacts.  The object-dependent artifacts
are caused by singularities of the object being scanned.  In this
case, artifacts can appear all along a line $L\thpo$ (i.e., a streak)
if $\thpo\in \bd(A)$ \emph{and} if there is a singularity of the
object on the line (such as a jump or object boundary tangent to
the line)---this singularity of the object ``generates'' the artifact
(see Theorem \ref{thm:streak}\,\ref{thmPart:object dep}).
The streak artifacts observed in limited-angle tomography are special
cases of this type of artifact.

The object-independent artifacts are essentially independent of the
object being scanned (they depend primarily on the geometry of
$\bd(A)$) and they can appear either on lines or on curves.  If the
boundary of $A$ is smooth near a point $\thpo\in\bd(A)$, then we prove
that artifacts can appear in the reconstruction along curves generated
by $\bd(A)$ near $\thpo$, and they can occur whether the object being
scanned has singularities or not (see Theorem \ref{thm:curve}\,\ref{no
sing}\ref{Rf neq 0}). We also prove that, if $\bd(A)$ is not smooth
(see Definition \ref{def:smooth}) at a point $\thpo$, then,
essentially independently of the object, an artifact line can be
generated all along $L\thpo$ (see Theorem
\ref{thm:streak}\,\ref{thmPart:nonsmooth}). 

We will illustrate our results with reconstructions for classical
problems including limited-angle tomography and ROI tomography, as
well as problems with novel data sets, including the synchrotron data
set in Figure \ref{fig:streaks}.  In addition, we provide estimates of
strength of the artifacts in Sobolev scale.

To the best of our knowledge, the mathematical literature up
until now used microlocal and functional analysis to explain streak
artifacts on lines that are generated by singularities of the object,
\emph{and} they exclusively focused on specific problems, primarily
limited-angle tomography (e.g., \cite{Ka1997:limited-angle,
Nguyen2015ip, FrikelQuinto2013}).  Important work was done to analyze
visible singularities for ROI (or local) tomography (e.g., \cite{FRS,
Quinto93, RZ, KLM, KR1996}).  However, we are not aware of any
reference where a microlocal explanation for the ring artifact in ROI
CT was provided, although researchers are well aware of the ring
itself (e.g., \cite{EHMCCL-ringROI,CHrB-ringROI}).  We are also not
aware of microlocal analyses of more general imaging setups, such as
the nonstandard one presented in Figure \ref{fig:streaks}.

\subsection{Organization of the article}

In Section \ref{sect:math basis}, we provide notation and some of the basic
ideas about wavefront sets.  In Section \ref{sect:results} we give our main
theoretical results, and in Section \ref{sect:numerical}, we apply them to
explain added artifacts in reconstructions from  classical and novel limited
data sets.  In Section \ref{sect:strength}, we describe the strength of
added artifacts in Sobolev scale.  Then, in Section \ref{sect:reduction},
we describe a simple, known method to decrease the added artifacts and
provide a reconstruction and theorem to justify the method. We provide more
details of the synchrotron experiment in Section \ref{sect:synchrotron} and
observations and generalizations in Section \ref{sect:disc}.  Finally, in
the appendix, we give some technical theorems and then prove the main
theorems.

\section{Mathematical basis}\label{sect:math basis}

Much of our theory can be made rigorous for distributions of compact
support (see \cite{Friedlander98, Rudin:FA} for an overview of
distributions), but we will consider only Lebesgue measurable
functions.  This setup is realistic in practice, and our theorems are
simpler in this case than for general distributions.  Remark
\ref{rem:odds and ends} provides perspective on this.

The set $\LD$ is the set of square-integrable functions on the closed
unit disk\hfil\newline  $D=\sparen{x\in \rtwo\st \norm{x}\leq 1}$.  The set
$\Lloc(\rtwo)$ is the set of locally square-integrable
functions---functions that are square-integrable over every compact
subset of $\rtwo$.  We define $\Lloc(\Sor)$ in a similar way where
$S^1$ is the circle of unit vectors in $\rtwo$.

\subsection{Notation}\label{sect:notation}
 
Let $\thp\in \Sor$, then the line perpendicular to $\th$ and
containing $p\th$ is denoted \bel{def:L} L\thp = \sparen{x\in \rtwo\st
x\cdot \th = p}.\ee Note that $L\thp = L(-\th,-p)$.  For $\th\in \So$
let $\thperp$ be the unit vector $\pi/2$ radians counterclockwise from
$\th$.  We define the \emph{X-ray transform} or \emph{Radon line
transform} of $f\in \LD$ to be the integral of $f$ over $L\thp$:
\bel{def:R}Rf\thp =\int_{-\infty}^\infty f(p\th+t\thperp)\,\dt.\ee The
symmetry of our parametrization of lines gives the symmetry condition
\bel{symmetryR} Rf(\th,p) = Rf(-\th,-p).\ee For functions $g$ on
$\Sor$, the \emph{dual Radon transform} or \emph{backprojection
operator} is defined \bel{def:R*}R^* g(x) = \int_{\So}
g(\th,x\cdot\th)\,\dth.\ee When visualizing functions on $\Sor$, we
will use the natural identification \bel{def:thbar}\begin{aligned}
\rtwo\ni(\vp,p)&\mapsto (\thbar(\vp),p)\in \Sor\ \ \text{where}\ \
\thbar(\vp):= (\cos(\vp),\sin(\vp))\in \So
\end{aligned}\ee
and for functions $g$  on $\Sor$ the
identification 
\bel{def:tg}\tg(\vp,p) = g(\thbar(\vp),p) \text{ for } (\vp,p)\in \rtwo.\ee
The \emph{sinogram} of a function
$g\thp$ is a grayscale picture on $[0,\pi]\times \rr$ or
$[0,2\pi]\times \rr$ of the mapping $(\vp,p)\mapsto \tg(\vp,p)$.

\subsection{Wavefront sets}\label{sect:WF} In this section, we define
some important concepts needed to describe singularities in general.
Sources, such as \cite{Friedlander98}, provide introductions to
microlocal analysis.  Generally cotangent spaces are used to describe
microlocal ideas, but they would complicate this exposition, so we
will identify a covector $(x,\xi\dx)$ with the associated ordered pair
of vectors $(x,\xi)$.  The book chapter \cite{KrQu-chapter} provides
some basic microlocal ideas and a more elementary exposition adapted
for tomography.

The concept of the wavefront set is a central notion of microlocal
analysis. It defines singularities of functions in a way that
simultaneously provides information about their location and
direction. We will employ this concept to define (singular) artifacts
precisely, and we will use the powerful theory of microlocal analysis
to analyze artifacts generated in incomplete data reconstructions in
tomography.

In what follows, by a \emph{cutoff function at $\xo\in\rtwo$},
we will denote a $C^\infty$-function of compact support that is nonzero at
$\xo$. We now define singularities and the wavefront set.

\begin{definition}[Wavefront set  \cite{Treves:1980vf,Friedlander98}]
\label{def:WF} Let $\xo\in \rtwo$, $\xio\in \rtwo\smo$, and $f\in
\Lloc(\rtwo)$.  We say $f$ is \emph{smooth at $\xo$ in direction
$\xio$} if there is a cutoff function $\psi$ at $\xo$ and an open cone
$V$ containing $\xio$ such that the Fourier transform $\Fc(\psi
f)(\xi)$ is rapidly decaying at infinity for $\xi\in V$.\footnote{That
is, for every $k\in\nn$, there is a constant $C_k>0$ such that
$\abs{\Fc(\psi f)(\xi)}\leq C_k/(1+\norm{\xi})^k$ for all $\xi\in V$.}

We say  $f$ has a \emph{singularity at $\xo$ in direction $\xio$}, or a singularity at $(\xo,\xio)$, if $f$ 
is not smooth at $\xo$ in direction $\xio$. 

The \emph{wavefront set of $f$}, $\WF(f)$, is defined as the set of
all singularities $(\xo,\xio)$ of $f$.  \end{definition} 
$f$ has a \emph{singularity} at $\xo$ if $f$ is not smooth at $\xo$ in
some direction. 


For $(\xo,\xio)\in\WF(f)$, the first entry $\xo$ will be called the
\emph{base point} of $(\xo,\xio)$.  Hence, the base point of a
singularity gives the location where the function $f$ is singular (not
smooth) in some direction.  If we say $f$ has a singularity at $\xo$,
we mean $\xo$ is the base point of an element of $\WF(f)$.

As an example, let $B$ be a subset of the plane with a smooth boundary
and let $f$ be equal to $1$ on $B$ and $0$ off of $B$.  Then, $\WF(f)$
is the set of all points $(x,\xi)$ where the base points $x$ are on
the boundary of $B$ and $\xi$ is normal to the boundary of $B$ at $x$.
In this case, $f$ has singularities at all points of $\bd(B)$.

\begin{remark}[Wavefront set for functions defined on $\Sor$]
The notion of a singularity and the wavefront set can also be defined
for functions $g\in \Lloc(\Sor)$ using the identification
\eqref{def:tg}.

In order to define $\WF(g)$, let $\tg$ denote the locally
square-integrable function on $\rtwo$ defined by \eqref{def:tg}. Let
$\thp\in \Sor$ and $\vp\in \rr$ with $\th = \thbar(\vp)$. Let $\eta\in
\rtwo\smo$.  Then, we say that $g$ has a singularity at
$\paren{\thp,\eta}$ if $\tg$ has a singularity at
$\paren{(\vp,p),\eta)}$, i.e., $\paren{\thp,\eta}\in \WF(g)$ if
$\paren{(\vp,p),\eta}\in \WF(\tg)$. In that case, the base point of a
singularity of $g$ is of the form $\thp$.


Note that the wavefront set is well-defined for functions on $\Sor$ as
both $\tg$ and $\vp\mapsto \thbar(\vp)$ are $2\pi$-periodic in $\vp$.
\end{remark}





 \begin{definition}\label{def:WFsets} Let $(\th,p)\in \Sor$. The
\emph{normal space of the line $L\thp$} is \bel{NL}\NL =
\sparen{(x,\om\th)\st x\in L\thp,\, \om\in \rr}.\ee For $f\in
\Lloc(\rtwo)$, the set of singularities of $f$ normal to $L\thp$ is
\bel{WFL}\WF_{L(\th,p)}(f) = \WF(f)\cap \NL.\ee 

If $\WFL(f)\neq \emptyset$, then we say $f$ has a
\emph{singularity (or singularities) normal to $L\thp$}.

If $\WFL(f)=\emptyset$, then we say $f$ is \emph{smooth normal to the
line $L\thp$}.

For $\xo\in \rtwo$, we let \[\WF_{\xo}(f) = \WF(f)\cap
\paren{\sparen{\xo}\times \rtwo}.\]

For $g\in \Lloc(\Sor)$,  we define \bel{WFthp} \WF_{\thp}(g) = \WF(g)\cap
\paren{\sparen{\thp}\times \rtwo}.\ee 
\end{definition}

It is important to understand each set introduced in Definition
\ref{def:WFsets}: $\NL$ is the set of all $(x,\xi)$ such that $x\in
L\thp$ and the vector $\xi$ is normal to $L\thp$ at $x$.
Therefore, $\WF_{L\thp}(f)$ is the set of wavefront
directions $(x,\xi)\in\WF(f)$ with $x\in L\thp$ and $\xi$ normal to
this line.

The set $\WF_{\xo}(f)$ is the wavefront set of $f$ above $\xo$, and
$\WF_{\xo}(f)= \emptyset $ if and only if $f$ is smooth in some
neighborhood of $\xo$ \cite{Friedlander98}.

 If $g\in \Lloc(\Sor)$, then $\WF_{\thp}(g)$ is the set of wavefront
directions with base point $\thp$.  We will exploit the sets
introduced in these definitions starting in the next section.


\section{Main results}\label{sect:results}

In contrast to limited-angle characterizations in
\cite{Ka1997:limited-angle,FrikelQuinto2013}, our main results
describe artifacts in arbitrary incomplete data reconstructions
that include the classical limited data problems as special
cases.  Our results are formulated in terms of the wavefront
set (Definition \ref{def:WF}), which provides a precise concept of
singularity.

In many applications, reconstructions from incomplete CT data are
calculated by the filtered backprojection algorithm (FBP), which is
designed for complete data (see \cite{SidkyPanVannier2009} for a practical
discussion of FBP). In this case, the incomplete data are
often extended by the algorithm to a complete data set on $\Sor$ by
setting it to zero off of the set $A$ (\emph{cutoff region}) over which
data are taken. Therefore, the incomplete CT data can be modeled as
\bel{def:RA} \RA f\thp = \onea\thp Rf\thp, \ee where $\onea$ is the
characteristic function of $A$.\footnote{The characteristic function of a
  set $A$ is the function that is equal to one on $A$ and zero outside of
  $A$.}
Thus, using the FBP algorithm to calculate a reconstruction from such data
gives rise to the reconstruction operator:
\bel{def:LA} \LA f = R^*\paren{\Lambda \RA f} = R^*\paren{\Lambda
\onea R f}, \ee where $\Lambda$ is the standard FBP filter (see e.g.,
\cite[Theorem 2.5]{Na:book} and \cite[\S 5.1.1]{Natterer01} for
numerical implementations) and $R^*$ is defined by \eqref{def:R*}.

Our next assumption collects the conditions
we will impose on the cutoff region $A$. There, we will use the notation $\intt(A)$, $\bd(A)$, and
$\ext(A)$ to denote the interior of $A$, the boundary of $A$,
and the exterior of $A$, respectively.

\begin{assumption}\label{hyp:A} Let  $A$ be a  proper subset of  $\Sor$ 
(i.e., $A\neq  \Sor$) with a nontrivial interior and assume $A$
is symmetric in the following sense: \bel{A symmetry}\text{if }\thp\in
A\text{ then }(-\th,-p)\in A.\ee In addition, assume that $A$ is
the smallest closed set containing $\intt(A)$, i.e. $A=\cl(\intt(A))$.
\end{assumption}

We now explain the importance of this assumption. Since $A$ is proper,
data over $A$ are incomplete.  Being symmetric means that, if $\thp\in
A$ then the other parameterization of $L\thp$ is also in $A$.  We
exclude degenerate cases, such as when $A$ includes an isolated curve
by assuming that $A=\cl(\intt(A))$.

Our next definition gives us the language to describe the geometry of
$\bd(A)$.

\begin{definition}[Smoothness of $\bd(A)$]\label{def:smooth}
  Let $A\subset \Sor$ and let $\thpo\in \bd(A)$.  
\begin{itemize}

\item We say that $\bd(A)$ is \emph{smooth near $\thpo$} if, for some
neighborhood, $U$ of $\thpo$ in $\Sor$, the part of $\bd(A)$ in $U$ is
a $C^\infty$ curve.  In this case, there is a unique tangent line in
$(\th,p)$-space to $\bd(A)$ at $\thpo$.
\begin{itemize}
\item If this tangent line is vertical (i.e., of the form $\th=\tho$),
then we say the boundary \emph{is vertical} or \emph{has infinite
slope} at $\thpo$.

\item If this tangent line is not vertical, then $\bd(A)$ is defined
near $\thpo$ by a smooth function $p=p(\th)$.  In this case, the
\emph{slope of the boundary at $\thpo$} will be the slope of this
tangent line:
\bel{def:p'}p'(\tho):=\frac{dp}{d\vp}\paren{\thbar(\vpo)}\ \
\text{where $\vpo$ is defined by $\thbar(\vpo) =
\tho$.}\footnote{Note that the map $\vp\mapsto \thbar(\vp)$
gives the local coordinates on $\So$ near $\vpo$ and $\tho$ that are
used in our proofs, and $p'$ is just the derivative of $p$ in these
coordinates.}\ee
\end{itemize}

\item We say that $\bd(A)$ is \emph{not smooth at $\thpo$} if it is
not a smooth curve in any neighborhood of $\thpo$.

\begin{itemize}
\item We say that $\bd(A)$ has \emph{a corner at $\thpo$} if the curve
$\bd(A)$ is continuous at $\thpo$, is smooth at all other points
sufficiently close to  $\thpo$, and has one-sided tangent lines at
$\thpo$ but they are different
lines.\footnote{\label{footnote:corner}Precisely, there
is an open neighborhood $U$ of $\thpo$, an open interval $I=(a,b)$,
 two smooth functions $c_i:I\to
U$, $i=1,2$, and some $t_0\in I$ such that $c_i(t_0)=\thpo,\ i=1,2$; the
curves $c_1(I)$ and $c_2(I)$ intersect transversally at $\thpo$; and
$\bd(A)\cap U = c_1((a,t_0])\cup c_2((a,t_0])$.}
\end{itemize}
\end{itemize}
\end{definition}

\subsection{Singularities and artifacts}\label{sect:visible} In this
section we define artifacts and visible and invisible singularities, and we
explain why artifacts appear on lines $L\thp$ only when $\thp\in \bd(A)$.

 \begin{definition}[Artifacts and visible
singularities]\label{def:artifact+sing} Every singularity $(x,\xi)\in
   \WF(\LA f)$ that is not a singularity of $f$ is called an
   \emph{artifact} (i.e., any singularity in $\WF(\LA f)\setminus \WF(f)$).

   An \emph{artifact curve} is a collection of base points of
artifacts that form a curve.

A \emph{streak artifact}
is an artifact curve in which the curve is a subset of a line.

Every singularity of $f$ that is also in $\WF(\LA f)$ is said to be
\emph{visible} (from data on $A$), i.e., any singularity in $\WF(\LA f)\cap
\WF(f)$. Other singularities of $f$ are called \emph{invisible} (from data
on $A$).\footnote{Invisible singularities of $f$ are smoothed by $\LA$ and
  reconstruction of those singularities is in general extremely ill-posed
  in Sobolev scale since any inverse operator must take each smoothed
  singularity back to the original non-smooth singularity, so inversion
  would be discontinuous in any range of Sobolev norms.}
\end{definition}

Our next theorem gives an analysis of singularities in $\LA f$
corresponding to lines $L\thp$ for $\thp\notin\bd(A)$. It
shows that the only singularities of $\LA f$ that are normal to lines
$L\thp$ for $\thp\in \intt(A)$ are visible singularities of $f$, and
there are no singularities of $\LA f$ normal to lines $L\thp$ for
$\thp\in \ext(A)$.

\begin{theorem}[Visible and invisible singularities in
the reconstruction] \label{thm:visible} Let $f\in \LD$ and let
$A\subset \Sor$ satisfy Assumption \ref{hyp:A}.
\begin{enumerate}[label=\Alph*.]
\item\label{thm:int} If $\thp\in \intt(A)$ then $\WFL(f)= \WFL(\LA
f)$. Therefore, all singularities of $f$ normal to $L\thp$ are
visible singularities, and $\LA f$ has no artifacts normal to
$L\thp$.

  \item\label{thm:ext} If $\thp\notin \paren{A\cap \supp(Rf)}$, then
$\WFL(\LA f)=\emptyset$.  Therefore, all singularities of $f$
normal to $L\thp$ are invisible from data on $A$,and $\LA f$ has no
artifacts normal to $L\thp$.

\item\label{thm:all} If $x\in D$ and all lines through $x$ are
parameterized by points in $\intt(A)$ (i.e., $\forall \th\in \So$,
$(\th,x\cdot\th)\in \intt(A)$), then \bel{visible WF}
\WF_{x}(f)= \WF_x(\LA f).\ee In this case, all singularities of $f$ at
$x$ are visible in $\LA f$.

\end{enumerate}
Therefore, artifacts occur only normal to lines $L\thp$ for
$\thp\in\bd(A)$.
\end{theorem}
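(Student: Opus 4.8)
The plan is to reduce all three parts to a single microlocal principle about the reconstruction operator $\LA = R^*\Lambda\onea R$ and then read off the concluding sentence by bookkeeping. The two facts I would invoke from the microlocal theory of the X-ray transform (the technical results promised for the appendix) are: (i) $R^*\Lambda R$ is an elliptic pseudodifferential operator — indeed a constant multiple of the identity for the standard ramp filter — so that $\WF(R^*\Lambda R f)=\WF(f)$; and (ii) $R^*\Lambda$ is a Fourier integral operator with the following localization property coming from its canonical relation: if its input $g$ is smooth in a neighborhood of \emph{both} parameters $\thp$ and $(-\th,-p)$ of a line, then $\WFL(R^*\Lambda g)=\emptyset$. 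Fact (ii) is what lets me localize the entire question to the behavior of $\onea Rf$ near $\thp$, and the symmetries \eqref{symmetryR} of $Rf$ and \eqref{A symmetry} of $A$ are what guarantee that a statement made at $\thp$ automatically holds at $(-\th,-p)$.

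For Part~\ref{thm:int} I would use that $A=\cl(\intt(A))$ is closed, so $\intt(A)$ is open; for $\thp\in\intt(A)$ there is a neighborhood on which $\onea\equiv 1$, and by \eqref{A symmetry} the same holds near $(-\th,-p)$, so $(\onea-1)Rf$ vanishes near both parameters. Writing $\LA f - R^*\Lambda R f = R^*\Lambda\bigl((\onea-1)Rf\bigr)$ and applying (ii) to the right-hand side gives $\WFL\bigl(R^*\Lambda((\onea-1)Rf)\bigr)=\emptyset$; since adding a term with empty $\WFL$ does not change $\WFL$, this yields $\WFL(\LA f)=\WFL(R^*\Lambda R f)=\WFL(f)$ by (i). For Part~\ref{thm:ext}, the hypothesis $\thp\notin A\cap\supp(Rf)$ splits into two cases: if $\thp\notin A$, then (since $A$ is closed) $\onea=0$ near $\thp$; if $\thp\notin\supp(Rf)$, then $Rf=0$ near $\thp$. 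Either way $\onea Rf$ vanishes in a neighborhood of $\thp$, and by the symmetries of both $A$ and $\supp(Rf)$ also near $(-\th,-p)$; applying (ii) directly to $g=\onea Rf$ gives $\WFL(\LA f)=\emptyset$.

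Part~\ref{thm:all} I would deduce from Part~\ref{thm:int} direction by direction: every $(x,\xi)$ with $\xi\in\rtwo\smo$ is normal to exactly one line through $x$, namely $L\thp$ with $\thp=(\th,x\cdot\th)$ and $\xi$ parallel to $\th$; by hypothesis this $\thp\in\intt(A)$, so $(x,\xi)\in\WFL(f)$ iff $(x,\xi)\in\WFL(\LA f)$ by Part~\ref{thm:int}. Taking the union over all directions $\xi$ gives $\WF_{x}(f)=\WF_{x}(\LA f)$. The concluding sentence then follows by the same bookkeeping: given $(x,\xi)\in\WF(\LA f)$, set $\thp=(\th,x\cdot\th)$ so that $(x,\xi)\in\WFL(\LA f)$. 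Since $\Sor=\intt(A)\cup\bd(A)\cup\ext(A)$, if $\thp\in\intt(A)$ then Part~\ref{thm:int} forces $(x,\xi)\in\WF(f)$, so it is not an artifact; if $\thp\in\ext(A)=\comp{A}$ then $\thp\notin A$, so Part~\ref{thm:ext} gives $\WFL(\LA f)=\emptyset$, contradicting $(x,\xi)\in\WFL(\LA f)$. Hence any artifact $(x,\xi)\in\WF(\LA f)\setminus\WF(f)$ must have $\thp\in\bd(A)$.

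The hard part is not the case analysis but justifying fact (ii) with the correct directional bookkeeping: one must show that a wavefront element of $\LA f$ normal to $L\thp$ is controlled \emph{solely} by the microlocal data of $\onea Rf$ over the two parameters $\thp$ and $(-\th,-p)$ of that single line, with no contamination from data over other lines. This is precisely where the computation of the canonical relation of $R$ and the Bolker (immersion) condition enter, ensuring that $R^*\Lambda$ only transports singularities at $\thp$ to covectors normal to $L\thp$, and it is where the symmetry \eqref{symmetryR} must be tracked carefully so that the cutoff $\onea$ is understood to act near both parameters of each line at once.
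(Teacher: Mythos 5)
Your proof is correct, and it reaches the paper's conclusions by a genuinely different mechanism. The paper itself disposes of this theorem in one sentence, citing \cite[Theorem 3.1]{Quinto93} and continuity of $R^*$ (see also its Proposition \ref{prop:R*corresp}); that is, it leans on the \emph{equality} form of the wavefront correspondences $\WF(Rf)=C\circ\WF(f)$ and $\WF(R^*g)=C^t\circ\WF(g)$, which rest on ellipticity and the Bolker condition. You instead obtain the equality in Part \ref{thm:int} from the exact inversion formula $R^*\Lambda R=cI$ via the decomposition $\LA f = cf + R^*\Lambda\bigl((\onea-1)Rf\bigr)$, after which only the H\"ormander--Sato \emph{upper bound} $\WF(R^*\Lambda g)\subset C^t\circ\WF(g)$ is ever needed, applied to terms vanishing near both parametrizations $\thp$ and $(-\th,-p)$ of the line; Parts \ref{thm:ext} and \ref{thm:all} and the final bookkeeping then follow exactly as you say. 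Two remarks on the trade. First, your route is more self-contained than you claim: since the no-cancellation inclusion $\WFL(f)\subset\WFL(\LA f)$ comes for free from exact inversion, the Bolker (injective immersion) condition you invoke at the end is not actually needed anywhere in your argument --- the localization in your fact (ii) follows from the form of the canonical relation \eqref{def:C} alone, because covectors of $g$ over $(\th',p')$ are carried by $C^t$ only to covectors conormal to $L(\th',p')$, and two distinct parallel lines are disjoint. Second, the paper's citation is economical in context because Proposition \ref{prop:R*corresp} (with Bolker) is developed anyway for Theorems \ref{thm:curve}, \ref{thm:streak}, and \ref{thm:Sobolev sing}, whereas your argument leans on $\Lambda$ being the exact FBP filter; for a general elliptic filter as in Remark \ref{rem:Lambda} one would replace exactness by ellipticity of the pseudodifferential operator $R^*\Lambda R$, which is where the two approaches reconverge. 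Your auxiliary steps --- using closedness of $A$ in Part \ref{thm:ext}, the symmetry bookkeeping forced by \eqref{symmetryR} and \eqref{A symmetry}, and the observation that adding a summand $v$ with $\WFL(v)=\emptyset$ leaves $\WFL$ unchanged (which requires both $\WF(u+v)\subset\WF(u)\cup\WF(v)$ and writing $u=(u+v)+(-v)$) --- are all sound.
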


This theorem follows directly from \cite[Theorem 3.1]{Quinto93}
and continuity of $R^*$ (see also \cite{KLM}).  Note that Theorem
\ref{thm:visible}\,\ref{thm:all} follows from parts \ref{thm:int} and
\ref{thm:ext} and is included because we will need it later.

\subsection{Analyzing singular
artifacts}\label{sect:characterization} 

We now analyze artifacts in limited data FBP reconstructions using
$\LA$ \eqref{def:LA}.  In particular, we show that the nature of
artifacts depends on the smoothness and geometry of $\bd(A)$ and, in
some cases, singularities of the object $f$.  

Theorem \ref{thm:visible} establishes that artifacts occur only above
points on lines $L\thp$ for $\thp\in \bd(A)$.  Our next two theorems
show that the only artifacts that occur are either artifacts on
specific types of curves (see \eqref{def:xb}) or streak artifacts, and
they are of two types.  

Let $f\in \LD$ and let $\thp\in \bd(A)$:
\begin{itemize}
\setlength{\itemsep}{2mm} \item \textbf{Object-independent artifacts:}
those are caused essentially by the geometry of $\bd(A)$.  They can
occur whether $f$ has singularities normal to $L\thp$ or not, and they
can be curves or streak artifacts. 

\item \textbf{Object-dependent artifacts:} those are caused
essentially by singularities of the object $f$ that are normal to
$L\thp$.  They  will not occur if $f$ is smooth normal to $L\thp$, and
they are always streak artifacts.

\end{itemize}

Our next theorem gives conditions under which artifact curves that are
not streaks (i.e., not subsets of lines) appear in reconstructions
from $\LA$. 

\begin{theorem}[Artifact Curves]\label{thm:curve}
 Let $f\in \LD$ and let $A\subset \Sor$ satisfy Assumption \ref{hyp:A}.
Let $\thpo\in \bd(A)$ and assume that $\bd(A)$ is smooth near $\thpo$.
Assume $\bd(A)$ has finite slope at $\thpo$ and let $I$ be a
neighborhood of $\tho$ in $\So$ such that $\bd(A)$ is given by a
smooth curve $p=p(\th)$ near $\thpo$.  Let
\begin{equation} \label{def:xb} \xb =\xb(\th) =p(\th)\th +
p'(\th) \thperp\in \rtwo\ \ \text{for $\th\in I$.}\end{equation} Then,
an object-independent artifact curve can appear in $\LA f$ on the
curve given by $I\ni \th\mapsto \xb(\th)$, which we will call the
\emph{$\xb$-curve}.

\begin{enumerate}[label=\Alph*.]
\item\label{not a line} The $\xb$-curve is curved (i.e., not a subset
of a line) unless it is a point.

\item\label{no sing} Assume $f$ is smooth normal to $L\thpo$.  
\begin{enumerate}[label=(\arabic*)]

\item\label{no sing:WF point} Then, \bel{sect3:WF point} \WFLo(\LA f)
\subset\sparen{\paren{\xb(\tho),\omega\tho}\st \omega\neq 0}.\ee

\item\label{Rf=0} If $Rf=0$ in a neighborhood of $\thpo$, then
$\WFLo(\LA f) = \emptyset$ and this $\xb$-curve will not
appear in the reconstruction $\LA f$ near $\xb(\tho)$.
 
 \item\label{Rf neq 0} If $Rf\thpo\neq 0$, then equality holds
in \eqref{sect3:WF point} and the $\xb$-curve will
appear in the reconstruction $\LA f$ near $\xb(\tho)$. 
\end{enumerate}
\end{enumerate}
\end{theorem}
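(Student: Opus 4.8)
The plan is to study $\LA=R^*\Lambda\,\onea R$ microlocally, treating $\onea Rf$ as the datum and transporting its wavefront set through the filtered backprojection operator $R^*\Lambda$. I would use three standard ingredients. First, the product rule $\WF(\onea Rf)\subset\{(x,\xi+\eta):(x,\xi)\in\WF(\onea)\cup\{0\},\ (x,\eta)\in\WF(Rf)\cup\{0\}\}$. Second, the fact that $R$ is an elliptic Fourier integral operator satisfying the Bolker condition, whose canonical relation in the coordinates $\th=\thbar(\vp)$ sends a data covector $((\vp,p),(\eta_\vp,\eta_p))$ to the image covector $(x,\xi)$ with $\xi=\eta_p\,\thbar(\vp)$ and $x=p\,\thbar(\vp)-(\eta_\vp/\eta_p)\,\thbar^\perp(\vp)$ (up to the sign conventions fixed in the appendix); the operator $R^*\Lambda$ carries singularities along this same relation. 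Third, that $\Lambda$ is an elliptic one-dimensional $\Psi$DO in $p$ with symbol $\sim|\eta_p|$. The only wavefront of $\onea$ is the conormal bundle $N^*(\bd(A))$, and near $\thpo$, where $\bd(A)$ is the graph $p=p(\th)$, this conormal is spanned by $\eta=\lambda(-p'(\tho),1)$; feeding this covector into the canonical relation is exactly what produces the $\xb$-curve, since $-\eta_\vp/\eta_p=p'(\tho)$ gives the image base point $p(\tho)\tho+p'(\tho)\thoperp=\xb(\tho)$ with direction $\xi=\lambda\tho$.

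For part \ref{not a line} I would argue purely geometrically. Writing $P(\vp)=p(\thbar(\vp))$ and using $\tfrac{d}{d\vp}\thbar=\thbar^\perp$ and $\tfrac{d}{d\vp}\thbar^\perp=-\thbar$, a direct differentiation gives
\[
\frac{d\xb}{d\vp}=(P+P'')\,\thbar^\perp(\vp),
\]
so the $\xb$-curve is everywhere tangent to $\thbar^\perp=\thperp$; equivalently it is the envelope of the family of lines $\{L\thp:\thp\in\bd(A)\}$. Because the unit tangent direction $\thbar^\perp(\vp)$ genuinely rotates with $\vp$, the velocity can keep a fixed direction on a subinterval only if $P+P''\equiv0$ there, in which case $\xb$ is constant. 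Hence, unless the curve degenerates to a point, its tangent direction varies and the curve cannot lie in any line.

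For part \ref{no sing}, assume $\WFLo(f)=\emptyset$. By the canonical relation, any element of $\WF(Rf)$ with base point $\thpo$ would force a singularity of $f$ normal to $L\thpo$; as there are none, $\WFpto(Rf)=\emptyset$, so $Rf$ is smooth near $\thpo$. Since multiplying by the smooth factor $Rf$ cannot enlarge a wavefront set, near $\thpo$ we get $\WF(\onea Rf)\subset N^*(\bd(A))$, whose only covector over $\thpo$ is $\lambda(-p'(\tho),1)$. Transporting this through $R^*\Lambda$ places an image singularity only at $(\xb(\tho),\lambda\tho)$, while boundary points $\th\neq\tho$ produce covectors $(\xb(\th),\lambda\th)$ whose directions are not multiples of $\tho$. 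Intersecting $\WF(\LA f)$ with $\NLo$ therefore retains at most $\{(\xb(\tho),\om\tho):\om\neq0\}$, which is \eqref{sect3:WF point} and proves \ref{no sing:WF point}; the remaining, diagonal part of the wavefront relation of $\LA$ contributes only visible singularities of $f$, which are absent here by Theorem \ref{thm:visible}. Part \ref{Rf=0} is then immediate: if $Rf\equiv0$ near $\thpo$ then $\onea Rf\equiv0$ there, so $\onea Rf$ has empty wavefront over $\thpo$ and $\WFLo(\LA f)=\emptyset$.

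The crux is part \ref{Rf neq 0}, where I must upgrade the inclusion in \eqref{sect3:WF point} to an equality, i.e.\ show the artifact is genuinely present when $Rf\thpo\neq0$. Here I would run an ellipticity argument on principal symbols rather than a support argument. Because $Rf$ is smooth near $\thpo$ with $Rf\thpo\neq0$, the product $\onea Rf$ has a genuine jump of size $Rf\thpo$ across $\bd(A)$, hence is a conormal distribution whose principal symbol at $(\thpo,\lambda(-p'(\tho),1))$ is nonzero. The finite-slope hypothesis guarantees $\eta_p=\lambda\neq0$ for this covector, which is precisely the condition for the filter $\Lambda$ (symbol $\sim|\eta_p|$) to be elliptic there; composing with the elliptic FIO $R^*$ then transports a nonvanishing principal symbol to $(\xb(\tho),\lambda\tho)$, so that covector genuinely lies in $\WF(\LA f)$, giving equality and the appearance of the $\xb$-curve. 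I expect this nonvanishing step to be the main obstacle: it requires the clean-composition and symbol calculus for $R^*\Lambda$ acting on the conormal distribution $\onea Rf$ (the technical content I would defer to the appendix), and it is exactly where finite slope is indispensable—were the slope infinite we would have $\eta_p=0$, the filter $\Lambda$ would annihilate the conormal singularity, and the artifact would appear as a streak instead (Theorem \ref{thm:streak}) rather than a curve.
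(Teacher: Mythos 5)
Your proposal is correct and follows the paper's overall strategy: use the smoothness of $Rf$ near $\thpo$ (forced by $\WFLo(f)=\emptyset$ through the canonical relation) to reduce $\WF_\thpo(\onea Rf)$ to the conormal of $\bd(A)$, then push that covector through the canonical relation of $R^*\Lambda$ to land at $(\xb(\tho),\om\tho)$. Two differences are worth recording. First, your part \ref{not a line} argument is a welcome sharpening of the paper's sketch: the paper only says ``by taking the derivative $\xb'(\th)$, one can show\dots,'' whereas you compute $\tfrac{d}{d\vp}\xb=(P+P'')\thbar^\perp(\vp)$, so that the curve lies in a line only if $P+P''\equiv 0$, whose solutions $P(\vp)=\xo\cdot\thbar(\vp)$ are exactly the paper's degenerate case of boundaries formed by lines through a single point $\xo$, for which $\xb\equiv\xo$. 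Second, for part \ref{Rf neq 0} the paper does not run a principal-symbol/conormal-distribution argument: it invokes Lemma \ref{lem:WF equal}\,\ref{mult by smooth} (multiplication by the smooth, nonvanishing factor $Rf$ preserves $\WF_\thpo$ exactly, so $\WF_\thpo(\onea Rf)=\WF_\thpo(\onea)$) together with the \emph{two-sided} wavefront identity $\WF(\LA f)=C^t\circ\WF(\onea Rf)$ from Proposition \ref{prop:R*corresp} and Remark \ref{rem:Lambda}, which holds because $R$ satisfies the global Bolker assumption and $\Lambda$ is elliptic in every direction that survives composition with $C^t$. That identity is precisely the ``no cancellation'' conclusion your clean-composition and symbol-calculus step is meant to deliver, so the technical content you defer to an appendix is exactly what the paper's Proposition \ref{prop:R*corresp} supplies in ready-made form; your route is heavier machinery but equally valid, and both arguments rest on the same two facts---Bolker plus ellipticity away from $\dth$---including your correct observation that the finite-slope hypothesis is what keeps the conormal covector of $\bd(A)$ out of the $\dth$ direction, which is annihilated by $C^t$.
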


 Theorem \ref{thm:curve} is proven in Appendix \ref{appendix:added}.
 Figures \ref{fig:sqrt}, \ref{fig:ROI}, and \ref{fig:general artifacts} in
 Section \ref{sect:numerical} all show $\xb$-artifact curves.  The
 following remark discusses these curves in more detail.

 \begin{remark}\label{rem:xb and D} Assume $\bd(A)$ is smooth with
finite slope at $\thpo$.  Let $I$ be a neighborhood of $\tho$ and let
$p:I\to\rr$ be a parametrization of $\bd(A)$ near $\thpo$.  Note that 
\[\xb(\th) \in L\thp\ \text{ for }\ \th\in I.\]

  If the slope of $\bd(A)$ at $\thpo$ is small enough, i.e., \bel{small
    slope}\abs{p'(\tho)}< \sqrt{1-p_0^2}\ee holds, then the $\xb$-curve of
  artifacts $\th\mapsto \xb(\th)$ will be inside the closed unit
  disk, $D$,
  at least for $\th$ near $\tho$.  If not, then $\xb(\tho)\notin
  \intt(D)$. This is illustrated in Section \ref{sect:numerical} in Figure
  \ref{fig:sqrt large} for large slope-- where \eqref{small slope} is not
  satisfied, and \ref{fig:sqrt small} for small slope--where \eqref{small
    slope} is satisfied.

If $\bd(A)$ is smooth and vertical at $\thpo$ (infinite slope), then there
will be no object-independent artifact on the line $L\thpo$.  This follows
from the proof of this theorem because the singularity in the data that
causes the $\xb$ curve is smoothed by $R^*$ in this case.  Intuitively, if
$\bd(A)$ is vertical then $p'(\tho)$ is infinite and from \eqref{def:xb},
the point $\xb(\tho)$ would be ``at infinity.''  In this case, only
object-dependent streak artifacts can be generated by $\thpo$, see Theorem
\ref{thm:streak} and Figures \ref{fig:LA-FBP} and \ref{fig:sqrt} in Section
\ref{sect:numerical}.
\end{remark}

Our next theorem gives the conditions under which there can
be streak artifacts in reconstructions using $\LA$.

 \begin{theorem}[Streak artifacts]\label{thm:streak} Let $f\in \LD$
and let $A\subset \Sor$ satisfy Assumption \ref{hyp:A}.

\begin{enumerate}[label=\Alph*.]

\item\label{thmPart:object dep} If $f$ has a singularity normal
to $L\thpo$, then a streak artifact can occur on $L\thpo$.

\item \label{thmPart:smooth object depII} If $f$ is smooth normal to
$L\thpo$ and $\bd(A)$ is smooth and vertical at $\thpo$, then $\LA f$
is smooth normal to $L\thpo$.\footnote{Note that Theorem
\ref{thm:curve}\,\ref{no sing} states that, if $f$ is smooth normal to
$L\thpo$ and $\bd(A)$ is smooth and not vertical at $\thpo$, then $\LA
f$ is smooth normal to $L\thpo$ except possibly at $\xb(\tho)$ (see
\eqref{sect3:WF point}).}

\item 
\label{thmPart:nonsmooth} Let $\thpo\in \bd(A)$ and assume that
$\bd(A)$ is not smooth at $\thpo$.  Then, $\LA f$ can have a streak
artifact on $L\thpo$ independent of $f$.\smallskip

If $f$ is smooth normal to $L\thpo$, then $Rf\thpo\neq 0$, and
$\bd(A)$ has a corner at $\thpo$ (see Definition \ref{def:smooth}),
then $\LA f$ \emph{does} have a streak artifact on $L\thpo$, i.e.,
\[\WFLo(\LA f) = \NLo.\]
\end{enumerate}
\end{theorem}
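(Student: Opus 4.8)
The plan is to exploit the factorization $\LA f = R^*\paren{\Lambda(\onea Rf)}$ and to track how the corner singularity of $\onea$ propagates through the filtered backprojection $R^*\Lambda$. Since $\WFLo(\LA f)=\WF(\LA f)\cap\NLo\subset\NLo$ holds by the very definition of $\WFLo$, the entire content of the claimed equality is the reverse inclusion $\NLo\subset\WF(\LA f)$, i.e.\ that the reconstruction is genuinely singular at \emph{every} base point of $L\thpo$ in the normal direction $\tho$. I would therefore concentrate on producing these singularities and, crucially, on showing that they do not cancel.

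First I would pin down $\WFpto(\onea)$. Because $\bd(A)$ has a corner at $\thpo$, locally $A$ is the transversal intersection of two regions bounded by the smooth arcs $c_1,c_2$; after a local diffeomorphism straightening these arcs, $\onea$ becomes, up to a smooth nonvanishing factor, a product of Heaviside functions. The wavefront set of such a product at the corner is the full fiber: the two one-sided conormals carry the strong edge singularities, while the corner itself radiates a weaker (order $-2$ homogeneous) singularity in \emph{every} remaining codirection, so $\WFpto(\onea)=\sparen{\thpo}\times(\rtwo\smo)$. Next, since $f$ is smooth normal to $L\thpo$, the microlocal mapping property of $R$ (the incidence relation underlying Theorem \ref{thm:visible} and \cite{Quinto93}) forces $\WFpto(Rf)=\emptyset$, i.e.\ $Rf$ is smooth in a neighborhood of $\thpo$; combined with $Rf\thpo\neq0$, multiplication by the smooth nonvanishing factor $Rf$ leaves the wavefront set unchanged, so $\WFpto(\onea Rf)=\sparen{\thpo}\times(\rtwo\smo)$ as well, and the elliptic filter $\Lambda$ preserves this on the region $\beta\neq0$ that matters below.

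Then I would apply the backprojection. Writing a codirection over $\thpo$ as $\alpha\dth+\beta\dpp$, the canonical relation of $R^*\Lambda$ sends a singularity of $\Lambda(\onea Rf)$ at $\paren{\thpo,(\alpha,\beta)}$ with $\beta\neq0$ to a singularity of $\LA f$ whose normal direction is proportional to $\tho$ and whose base point is $x=\po\tho-(\alpha/\beta)\thoperp\in L\thpo$ (the same bookkeeping that produces $\xb$ in Theorem \ref{thm:curve}, now with a full range of slopes rather than a single one). As $(\alpha,\beta)$ runs over all directions with $\beta\neq0$, the ratio $\alpha/\beta$ runs over all of $\rr$ and both signs of $\beta$ occur, so $x$ sweeps out the entire line $L\thpo$ and the normal covectors fill $\NLo$. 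Because $R^*\Lambda$ is an elliptic Fourier integral operator on this part of its canonical relation, each of these contributions is genuinely present, yielding $\NLo\subset\WF(\LA f)$ and hence the asserted equality $\WFLo(\LA f)=\NLo$. The weaker first assertion---that a streak can appear independent of $f$ whenever $\bd(A)$ is merely non-smooth---follows from the same mechanism, since any non-smoothness already forces $\WFpto(\onea)$ to contain a nontrivial range of codirections (rather than the single conormal of the smooth case), which backprojects to a range of base points along $L\thpo$ provided $Rf\thpo\neq0$.

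The main obstacle is precisely this reverse inclusion, and it splits into two delicate points. The first is establishing that $\WFpto(\onea)$ is \emph{exactly} the full fiber at a corner: the product-of-Heavisides computation gives the upper bound easily, but one must verify that the corner genuinely radiates in all codirections (that the order $-2$ homogeneous part of its Fourier transform is nonvanishing), which I expect to isolate as a technical lemma in the appendix. The second, and harder, point is the non-cancellation: wavefront inclusions under Fourier integral operators are generically only one-sided, so asserting that the singularities \emph{survive} $R^*\Lambda$ requires the ellipticity (nonvanishing symbol) of the filtered backprojection on the relevant piece of the canonical relation, together with care that the two-to-one symmetry $L\thp=L(-\th,-p)$ contributes coherently rather than destructively.
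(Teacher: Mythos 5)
Your proposal follows the paper's own proof essentially step for step: the paper sets $G=\onea Rf$, uses Lemma \ref{lem:WF equal} together with the smoothness and nonvanishing of $Rf$ near $\thpo$ to get $\WF_\thpo(G)=\WF_\thpo(\onea)=T^*_\thpo(\Sor)\smo$ (your full-fiber claim, which the paper verifies by exactly the straightening-diffeomorphism/product-of-Heavisides Fourier computation you sketch, carried out in the proof of Theorem \ref{thm:Sobolev sing}\ref{nonsmooth bndy}), and then concludes $\WFLo(\LA f)=\NLo$ from the correspondence $\WF(\LA f)=C^t\circ\WF(G)$ of \eqref{LAG} and \eqref{fullCt}, whose \emph{equality}---your non-cancellation worry---is precisely what Proposition \ref{prop:R*corresp} and Remark \ref{rem:Lambda} supply via ellipticity of $R^*$, $\Lambda$, and the Bolker condition. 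The only pieces you leave implicit, parts \ref{thmPart:object dep} and \ref{thmPart:smooth object depII}, follow at once from the same machinery (for \ref{thmPart:smooth object depII}: a vertical boundary contributes only $\pm\dth$ codirections, and $C^t\circ\sparen{(\th,p,\om\dth)}=\emptyset$, which is exactly your restriction to $\beta\neq 0$), matching the paper's one-line treatments.
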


The proof Theorem \ref{thm:streak} is provided in  Appendix \ref{appendix:added}.

Part \ref{thmPart:object dep} of Theorem \ref{thm:streak} provides a
generalization of classical limited-angle streak artifacts observed in
Figure \ref{fig:LA-FBP} in Section \ref{sect:numerical}. Such
limited-angle type artifacts can also be seen in Figures
\ref{fig:sqrt} and \ref{fig:general artifacts} in that section.

Part \ref{thmPart:smooth object depII} of Theorem \ref{thm:streak}
shows that the streak artifacts in Part \ref{thmPart:object
dep} are object-dependent.

Part \ref{thmPart:nonsmooth} of Theorem \ref{thm:streak} explains the
object-independent streak artifacts in Figure \ref{fig:general
artifacts} that are highlighted in
yellow as well as the object-independent streak artifacts that are
observed in the real data reconstructions in Figures \ref{subfig:sync
data fbp rec} and \ref{subfig:sync data fbp rec} in Section
\ref{sect:synchrotron}.  In Theorem \ref{thm:Sobolev sing}, we will
describe the strength of the artifacts in Sobolev scale in specific
cases of Theorems \ref{thm:curve} and \ref{thm:streak}.

 \begin{example}\label{ex:no artifacts} Theorem \ref{thm:curve} and
Theorem \ref{thm:streak} give necessary conditions under which $\LA f$ can
have artifacts.  We now provide an example when the conditions of those
theorems hold for $f$ and $A$ but $\LA f$ has no artifacts.  This is why we
state in parts of Theorems \ref{thm:curve} and \ref{thm:streak} that
artifacts \emph{can} occur, rather than that they \emph{will} occur.

 Let $A=\sparen{\thp\in \Sor \st \abs{p}\leq 1}$, then $A$ represents
the set of lines meeting the closed unit disk, $D$.  Let $f$ be the
characteristic function of $D$.  Then, for all $x\in \bd(D)=S^1$, $\xi
= (x,x)\in \WF(f)$, $\xi$ is normal to the line $L(x,1)$, and
$(x,1)$, which is in $\Sor$, is also in $ \bd(A)$.  Under these
conditions, there \emph{could} be a streak artifact on $L(x,1)$ by
Theorem \ref{thm:streak}\,\ref{thmPart:object dep} Because $\bd(A)$ is
smooth and not vertical, there \emph{could} be an $\xb$-curve artifact
by Theorem \ref{thm:curve}.  However, $\onea Rf = Rf$ so $\LA f = f$
and there are no artifacts in this reconstruction.
\end{example}

Object-dependent streak artifacts were analyzed for limited-angle
tomography in articles such as \cite{Ka1997:limited-angle,FrikelQuinto2013,
  Nguyen2015ip}, but we are unaware of a reference to Theorem
\ref{thm:streak}\,\ref{thmPart:object dep} for general incomplete data
problems.  We are not aware of a previous reference in the literature to a
microlocal analysis of the $\xb$-curve artifact as in Theorem
\ref{thm:curve} or to the corner artifacts as in Theorem
\ref{thm:streak}\,\ref{thmPart:nonsmooth} We now assert that all singular
artifacts are classified by Theorems \ref{thm:curve} and \ref{thm:streak}.

\begin{theorem}\label{thm:complete} Let $f\in \LD$
and let $A\subset \Sor$ satisfy Assumption \ref{hyp:A}.  The only singular
artifacts in $\LA f$ occur on $\xb$-curves as described by Theorem
\ref{thm:curve} or are streak artifacts as described by Theorem
\ref{thm:streak}.\end{theorem}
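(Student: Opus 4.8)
The plan is to reduce an arbitrary artifact to a singularity normal to a line $L\thpo$ with $\thpo\in\bd(A)$, and then to read off its geometric type directly from the classification already obtained in Theorems \ref{thm:curve} and \ref{thm:streak}. First I would fix an artifact $(x,\xi)\in\WF(\LA f)\setminus\WF(f)$ and attach to it the unique line to which $\xi$ is conormal: with $\tho=\xi/\norm{\xi}\in\So$, $\omega=\norm{\xi}>0$, and $p_0=x\cdot\tho$, one has $x\in L\thpo$ and $(x,\xi)=(x,\omega\tho)\in\NLo$, hence $(x,\xi)\in\WFLo(\LA f)$. Theorem \ref{thm:visible} then pins $\thpo$ to $\bd(A)$: part \ref{thm:int} excludes $\thpo\in\intt(A)$, since there $\WFLo(\LA f)=\WFLo(f)\subset\WF(f)$ would make $(x,\xi)$ a singularity of $f$; and part \ref{thm:ext} excludes $\thpo\notin A\cap\supp(Rf)$, since there $\WFLo(\LA f)=\emptyset$. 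As $A=\cl(\intt(A))$ is closed, what remains is exactly $\thpo\in A\setminus\intt(A)=\bd(A)$.

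Next I would run the mutually exclusive, exhaustive case split of Definition \ref{def:smooth} on the shape of $\bd(A)$ at $\thpo$. If $\bd(A)$ is smooth with finite slope and $f$ is smooth normal to $L\thpo$, then Theorem \ref{thm:curve}\,\ref{no sing:WF point} confines $\WFLo(\LA f)$ to the fiber over the single base point $\xb(\tho)$, so $(x,\xi)$ lies on the $\xb$-curve; if instead $f$ is singular normal to $L\thpo$, the new singularities fill out $L\thpo$ and give a streak as in Theorem \ref{thm:streak}\,\ref{thmPart:object dep}. If $\bd(A)$ is smooth and vertical, Theorem \ref{thm:streak}\,\ref{thmPart:smooth object depII} shows $\WFLo(\LA f)$ can be nonempty only when $f$ is singular normal to $L\thpo$, so again $(x,\xi)$ lies on a streak by part \ref{thmPart:object dep}. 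Finally, if $\bd(A)$ is not smooth at $\thpo$, then Theorem \ref{thm:streak}\,\ref{thmPart:nonsmooth} (object-independent) together with part \ref{thmPart:object dep} (object-dependent) places all of $\WFLo(\LA f)$ on the line $L\thpo$, again a streak. In every branch $(x,\xi)$ lies on an $\xb$-curve or a streak, which is the claim.

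The main obstacle is not this bookkeeping but the \emph{containments} it silently uses. Theorems \ref{thm:curve} and \ref{thm:streak} are stated permissively (artifacts ``can'' appear), so completeness requires the upper bounds proved in Appendix \ref{appendix:added}, namely that evaluating $\WF(\LA f)$ through the composition $\LA=R^*\Lambda\onea R$ creates singularities of $\LA f$ not already in $\WF(f)$ only from two sources: the conormal of $\bd(A)$ carried by $\WF(\onea)$, which the backprojection $R^*$ sends to the $\xb$-curve at smooth finite-slope points, and the fibered sum $\WF(\onea)\oplus\WF(Rf)$ (together with the fan of conormal directions produced at non-smooth points of $\bd(A)$), which $R^*$ sends to entire lines $L\thpo$. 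The delicate point is to verify that no further mechanism in this composition yields a singularity normal to $L\thpo$---in particular that data covectors with no $\dpp$ component are smoothed by $R^*$ and contribute nothing---so that these two families really exhaust $\WFLo(\LA f)\setminus\WF(f)$ for each $\thpo\in\bd(A)$, which is what turns the enumeration above into a complete classification.
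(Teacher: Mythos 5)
Your proposal is correct and follows essentially the same route as the paper: reduce to $\thpo\in\bd(A)$ via Theorem \ref{thm:visible}, then note that singularities of $G=\onea Rf$ at $\thpo$ can only come from $\onea$ or from $Rf$, so that the upper bounds established in the appendix proofs of Theorems \ref{thm:curve} and \ref{thm:streak} (via the composition $C^t\circ\WF(G)$) exhaust $\WFLo(\LA f)$. The paper organizes the final step by source of singularity ($\onea$ versus $Rf$) rather than by your case split on the geometry of $\bd(A)$, but the substance---including your correctly flagged point that the permissive ``can occur'' statements must be upgraded to containments, and that covectors proportional to $\dth$ are annihilated by $C^t$---is identical to the paper's argument.
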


 Theorem \ref{thm:complete} is proven in Section \ref{appendix:added}.

\section{Numerical illustrations of  our theoretical
results}\label{sect:numerical}

We now consider a range of well-known incomplete data problems as well
as unconventional ones to show how the theoretical results in Section
\ref{sect:results} are reflected in practice.  All sinograms represent
the data $g\thp=Rf\thp$ using \eqref{def:tg} and displaying them in
the $\vpp$-plane rather than showing them on $\Sor$.  To this end, we
define \bel{otpr-conventions}\begin{gathered} \tL\vpp :=
L(\thbar(\vp),p),\\
(\vp,p)\mapsto \tg(\vp,p)=g(\thbar(\vp),p)\ \text{ for }
\vp\in [0,2\pi], \ p\in [-\sqrt{2},\sqrt{2}],\\
\text{if $A\subset\Sor$, then $\tA := \sparen{\vpp\in [0,2\pi]\times
\rr\st (\thbar(\vp),p)\in A}$.}\end{gathered}\ee In this section, we
will specify limited data using the sets $\tA\subset [0,2\pi]\times
\rr$ rather than $A\subset \Sor$, and  we will let
$R$ denote the Radon transform with this parametrization.
Furthermore, because of the symmetry condition \eqref{symmetryR}, we
will display only the part of the sinogram in $[0,\pi]\times
\bparen{-\sqrt{2},\sqrt{2}}$.  Except for the center picture in Figure
\ref{fig:sqrt large}, reconstructions are displayed on $[-1,1]^2$.

\subsection{Limited-angle tomography}\label{sect:LimitedAngle} 

First, we analyze limited-angle tomography, a classical problem in
which Theorem \ref{thm:streak}\,\ref{thmPart:object dep}
applies.  In this case $\bd(\tA)$ consists of four vertical lines $\vp
= \vp_1$, $\vp=\vp_2$, $\vp = \vp_1+\pi$, $\vp=\vp_2+\pi$ for two
angles $0\leq \vp_1<\vp_2<\pi$ representing the ends of the angular
range. Taking a closer look at the statement of
Theorem \ref{thm:streak}\,\ref{thmPart:object dep} and the
results of \cite{FrikelQuinto2013, FrikelQuinto-hyperplane} one can
observe that, locally, they describe the same phenomena, namely:
whenever there is a line $\tL\vppo$ in the data set with $\vppo\in
\bd(\tA)$ and which is normal to a singularity of $f$, then a streak
artifact can be generated on $\tL\vppo$ in the reconstruction $\LA
f$.  Therefore, Theorem \ref{thm:streak}\,\ref{thmPart:object
dep} generalizes the results of \cite{Ka1997:limited-angle,
FrikelQuinto2013} as it also applies to cutoff regions with
non-vertical tangent.

It is important to note that, with limited-angle data, there are no
object-independent artifacts since $\bd(\tA)$ is smooth and vertical
(the $\xb$-curve is not defined).  
\begin{figure}[h!]
\newcommand{\ww}{0.3\linewidth} \centering
  \includegraphics[height=\ww]{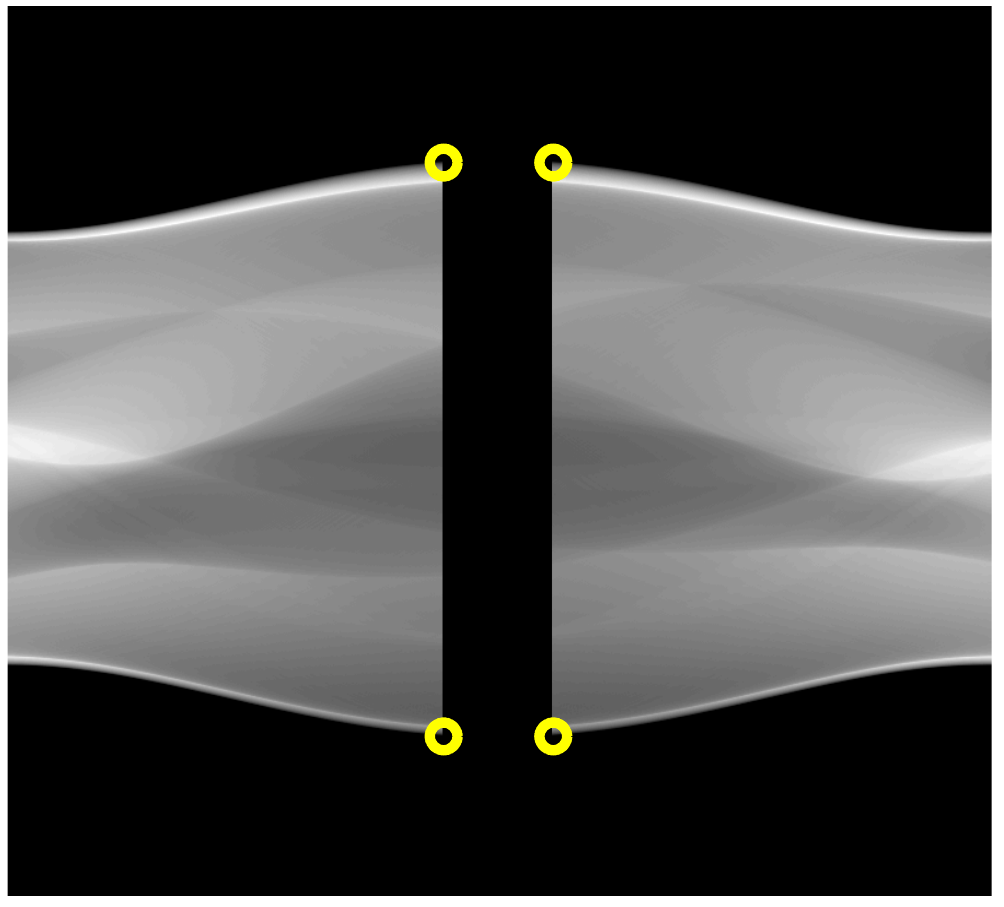}
\includegraphics[height=\ww]{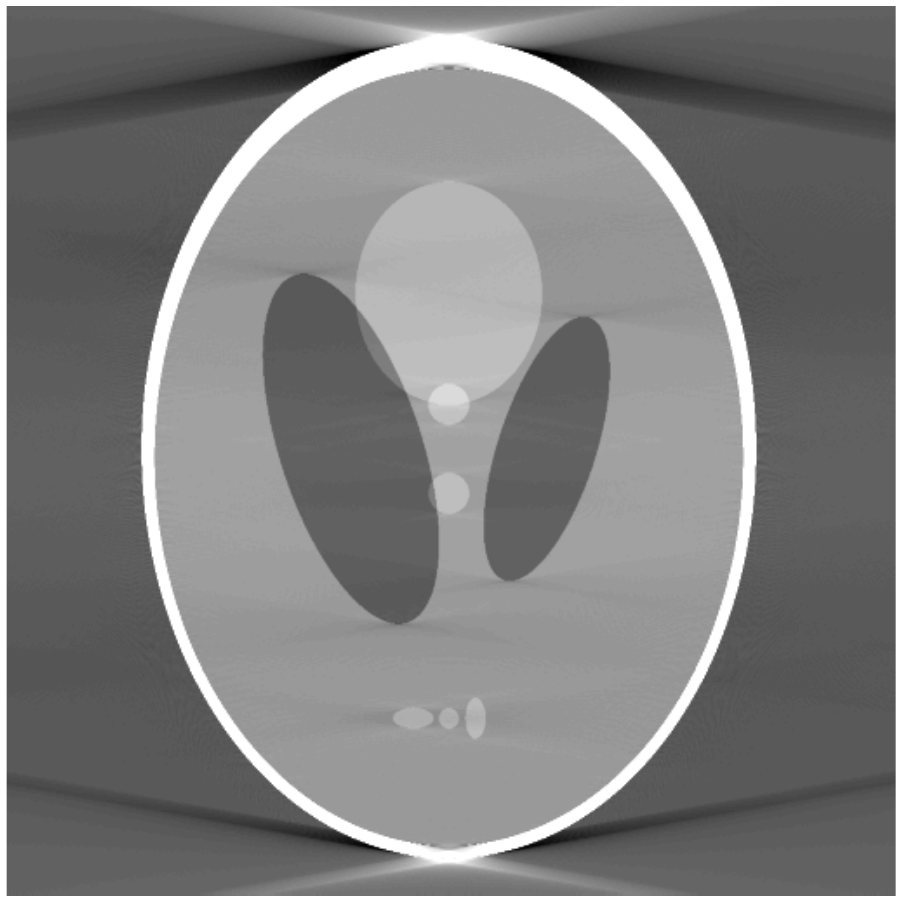}
\includegraphics[height=\ww]{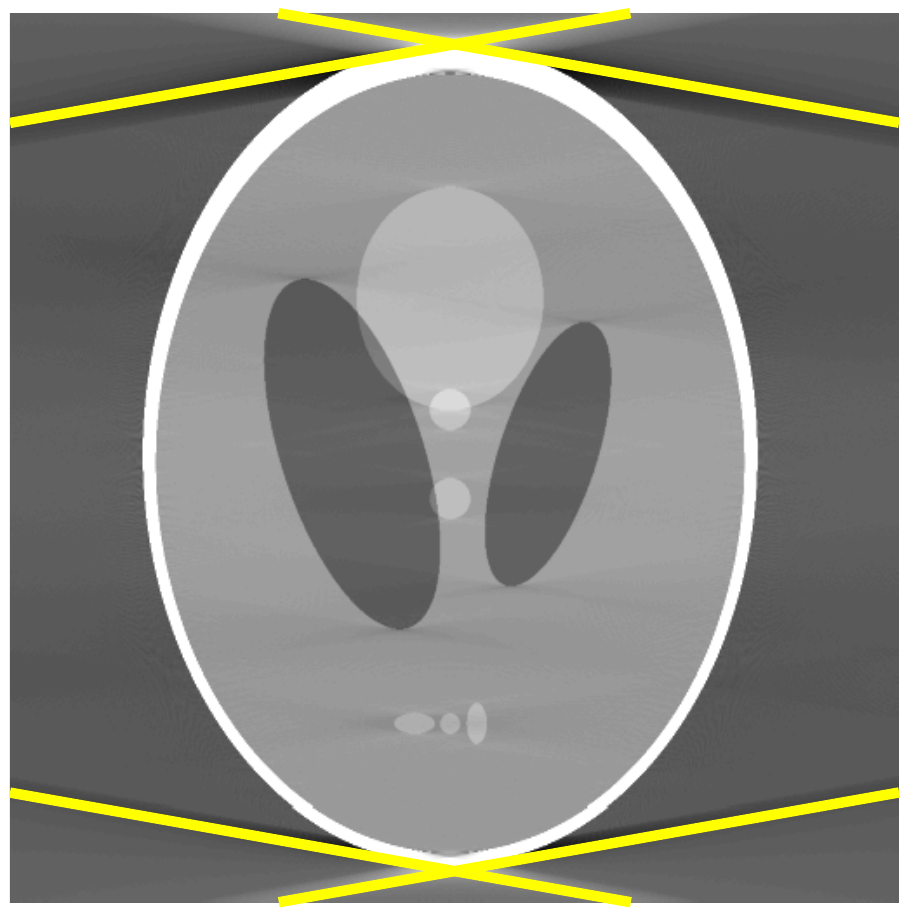}
\caption{\textit{Left:} Limited-angle data ($\bd(\tA)$ is
vertical). \textit{Center:} FBP reconstruction. \textit{Right:}
Reconstruction highlighting object-dependent artifact lines tangent to
skull corresponding to the four circled points in the sinogram. }
\label{fig:LA-FBP}
   \end{figure} 

Figure \ref{fig:LA-FBP} illustrates limited-angle tomography. The
boundary, $\bd(\tA)$,
consists of the vertical lines $\vp = 4\pi/9$ and $\vp=5\pi/9$.  The
artifact lines are exactly the lines with $\vp=4\pi/9$ or $5\pi/9$
that are tangent to boundaries in the object (i.e., wavefront
directions are normal to the line).  The four circled points on the
sinogram correspond to the object-dependent artifact lines at the
boundary of the skull.  The corresponding lines are tangent to the
skull and have angles $\vp=4\pi/9$ and $\vp = 5\pi/9$. One can also
observe artifact lines tangent to the inside of the skull with these
same angles. 

One can notice invisible singularities of $f$---the top and bottom
boundaries of the skull---at the top and bottom of the reconstruction.
If the excluded region were larger, they would be more noticeable.

\subsection{Smooth boundary with finite slope}\label{sect:smooth} 

We now consider the general case in Theorem \ref{thm:curve} by
analyzing the artifacts for a specific set $\tA$ which is defined as
follows.  It will be cut in the middle so that the left-most boundary
of $A$ occurs at $\vp = a: = \frac{4}{9}\pi$; the right-most boundary
is constructed as $\vp = b: = \frac{5}{9}\pi$ for $p \leq 0$ and 
\begin{equation} \label{eq:p_sqrt}
p(\vp) = c \sqrt{\vp - b}, \quad \vp > b
\end{equation}
for $p>0$ such that the two parts join differentiably at $\vpp =
(0,0)$. The steepness of the curved part of the right-most boundary is
governed by the constant $c$ (as seen in the two sinograms in Figure
\ref{fig:sqrt}).

 \begin{figure}[h!] \begin{subfigure}[t]{0.98\textwidth}
\newcommand{\ww}{0.3\linewidth} \centering
\includegraphics[height=\ww]{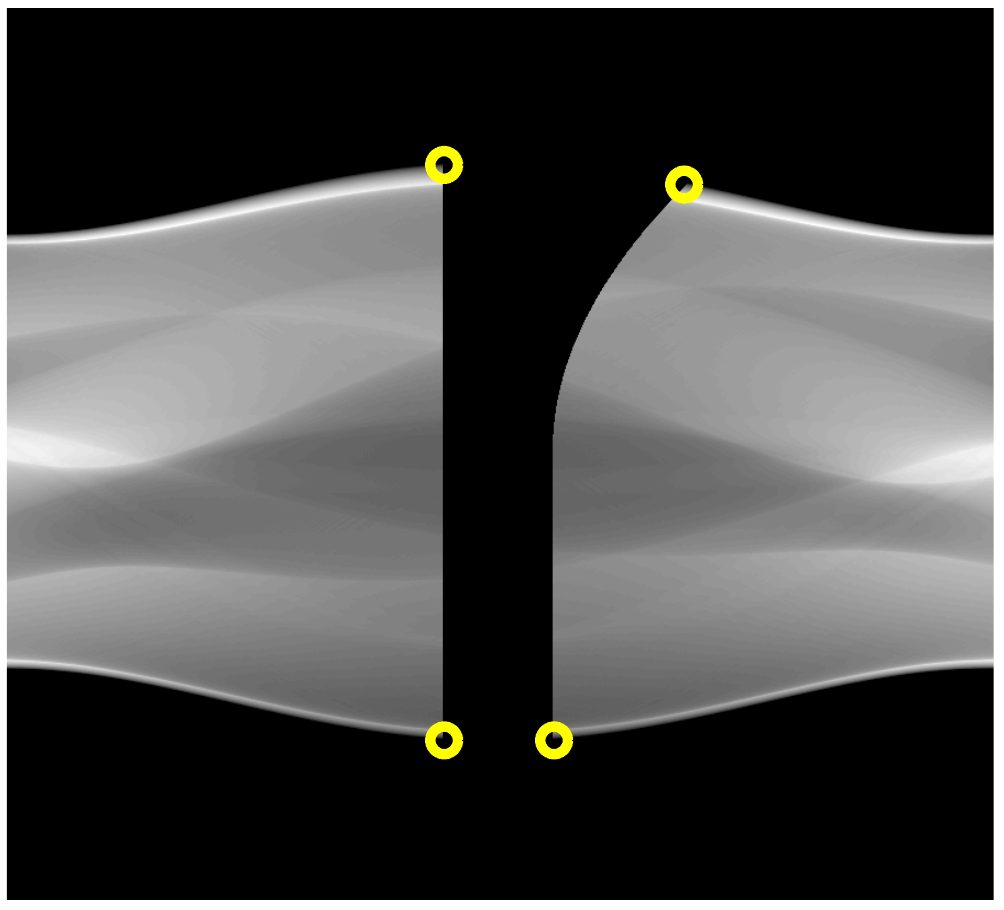}
\includegraphics[height=\ww]{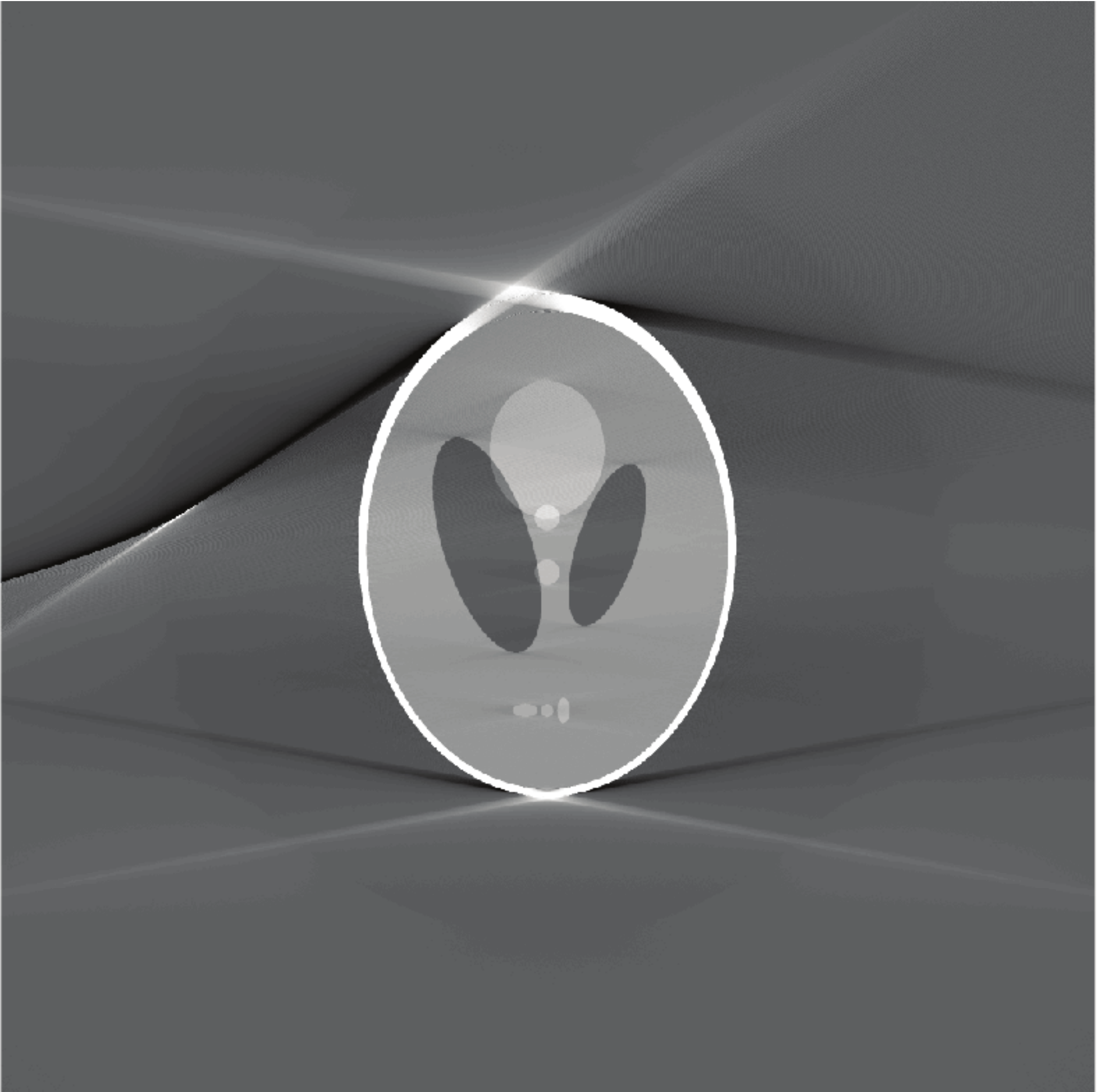}
\includegraphics[height=\ww]{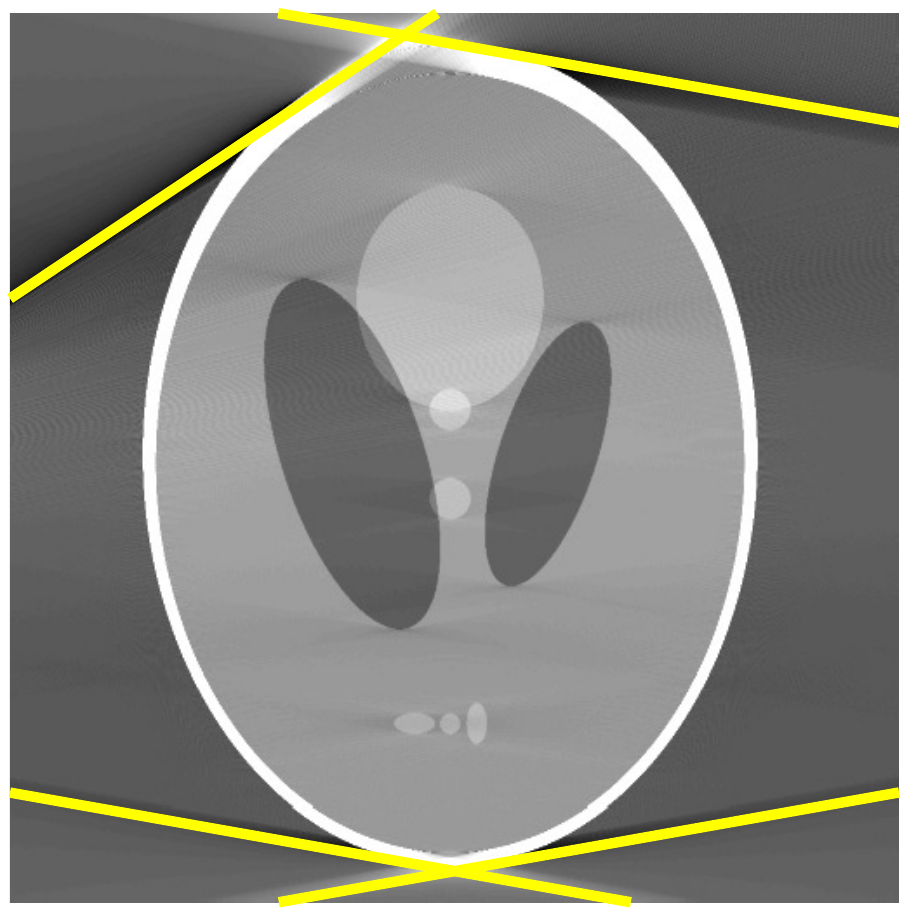}
\caption{\textit{Left:} Sinogram with the boundary of $\tA$ having
large slope ($c=1.3$).  \textit{Center:} FBP reconstruction over the
larger region $[-2,2]^2$ to show that the $\xb$-curve of
artifacts is outside of the region displayed in the right frame.
\textit{Right:} Reconstruction highlighting object-dependent artifact
lines tangent to the skull corresponding to the four circled points in
the sinogram.} \label{fig:sqrt large}
\end{subfigure}
\newline
\begin{subfigure}[t]{1.0\textwidth}\centering
\newcommand{\ww}{0.3\linewidth}
  \includegraphics[height=\ww]{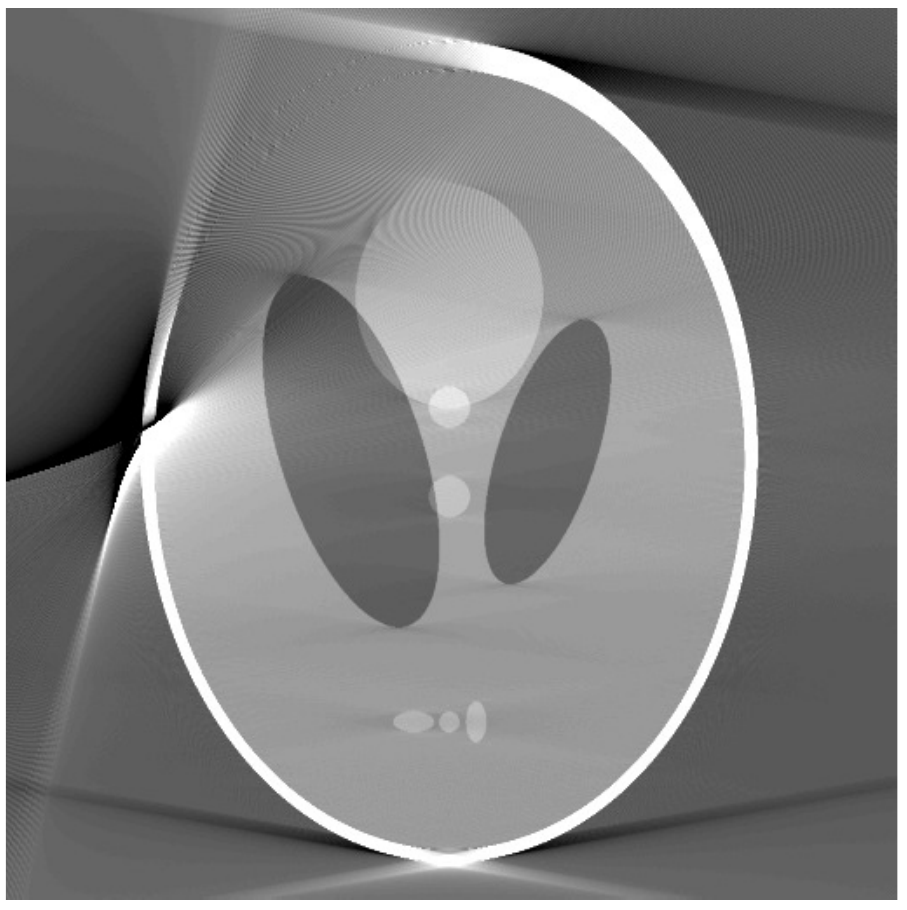}
\includegraphics[height=\ww]{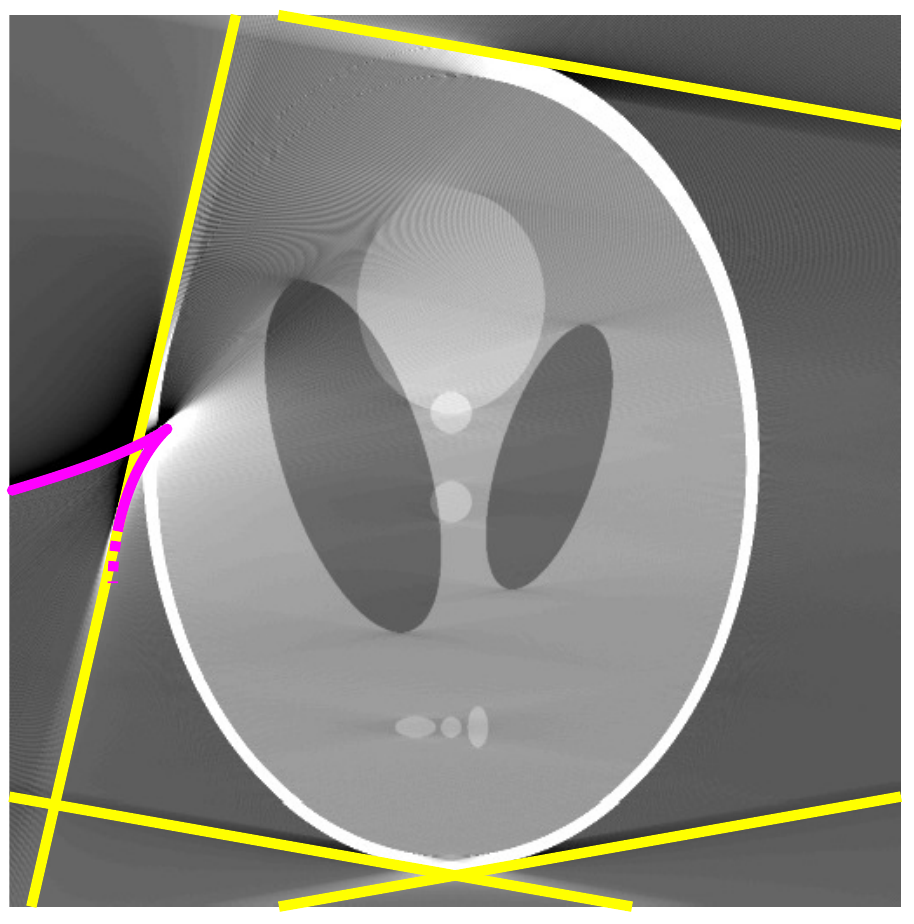}
\caption{\textit{Left:} Sinogram with boundary of $\tA$ having small slope ($c=0.65)$.
The part of the boundary causing the prominent $\xb$-curve of artifacts in the reconstruction
region is highlighted in magenta.  The solid part of the curve indicates the
artifacts that are realized in the reconstruction.  The dotted curve
at the right end of the sinogram indicates potential artifacts that
are not realized because the corresponding part of $\bd(\tA)$ is outside
$\supp(Rf)$ (see Theorem \ref{thm:visible}\ref{thm:ext}).
\textit{Center:} FBP reconstruction.  \textit{Right:} Same FBP reconstruction as in the 
center image highlighting some of the added artifacts. The magenta
curve in the reconstruction is the $\xb$-curve of artifacts 
and the yellow artifact lines are object-dependent artifacts similar
to those in Figure \ref{fig:sqrt large}.}\label{fig:sqrt small}
\end{subfigure} \caption{Illustration of artifacts with smooth
boundary given by \eqref{eq:p_sqrt}.  The $\xb$-curve $\vp\mapsto
\xb\paren{\thbar(\vp)}$ of artifacts is outside the reconstruction
region in the top figure and it meets the object in the bottom
picture.}\label{fig:sqrt}
\end{figure}

According to the condition \eqref{small slope}, the curved part of
$\bd(\tA)$ is the only part that can potentially cause
object-independent artifacts in $D$, since the other parts are
vertical.  In Figure \ref{fig:sqrt}, we consider two data sets $\tA$
with smooth boundary; In Figure \ref{fig:sqrt large}, the
$\xb$-curve $\vp\mapsto \xb\paren{\thbar(\vp)}$ is outside the
unit disk and in Figure \ref{fig:sqrt small}, it meets the object.

 Figure \ref{fig:sqrt large} provides a reconstruction with data set
defined by $c=1.3$ in \eqref{eq:p_sqrt}.  Many artifacts in the
reconstruction region are the same as in Figure \ref{fig:LA-FBP}
because the boundaries of the cutoff regions are substantially the
same: the artifacts corresponding to the circles with $\vp = 4\pi/9$
and the lower circle with $\vp = 5\pi/9$ are the same limited-angle
artifacts as in Figure \ref{fig:LA-FBP} because those parts of the
boundaries are the same.  However, the upper right circled point in
the sinogram has $\vp>5\pi/9$ so the corresponding artifact line has
this larger angle, as seen in the reconstruction.  The center
reconstruction in Figure \ref{fig:sqrt large} shows the $\xb$-curve
of artifacts, but it is far enough from $D$ that it is not visible in the
reconstruction on the right.

Figure \ref{fig:sqrt small} provides a reconstruction with data set
defined by $c=0.65$ in \eqref{eq:p_sqrt}. In this case, the
object-dependent artifacts are similar to those in Figure
\ref{fig:sqrt large}, but the lines for $\vpp$ defined by
\eqref{eq:p_sqrt} are different because $\bd(\tA)$ is different.  The
highlighted part of the boundary of $\tA$ defined by \eqref{eq:p_sqrt}
indicates the boundary points that create the part of the $\xb$-curve
of artifacts that now meets the reconstruction region.  The
highlighted curve in the right-hand reconstruction of Figure
\ref{fig:sqrt small} is this part of the $\xb$-curve.  Note that this
curve is calculated using the formula \eqref{def:xb} for
$\xb\paren{\thbar(\vp)}$ rather than by visually tracing the physical
curve on the reconstruction. That the calculated curve and the
artifact curve are substantially the same shows the efficacy of our
theory.  A simple exercise shows that, for any $c>0$, the $\xb$-curve
changes direction at $\xb(\thbar(1/2+5\pi/9))$.

Let $\vppo$ be the coordinates of the circled point in the upper
right of the sinogram in Figure \ref{fig:sqrt small}.  This circled
point is on the boundary of $\supp(Rf)$ so $\tL\vppo$ is tangent to the
skull and an object-dependent artifact is visible on $\tL\vppo$ in
the reconstruction.  The $\xb$-curve ends at $\xb\paren{\thbar(\vpo)}$
(as justified by Theorem \ref{thm:visible}\,\ref{thm:ext}) and so
the $\xb$-curve seems to blend into this line $\tL\vppo$.  If $\supp(f)$ were
larger and the dotted part of the magenta curve on the sinogram were
in $\supp(Rf)$, the $\xb$-curve would be longer.

\subsection{Region-of-interest (ROI) tomography}\label{sect:ROI} The
ROI problem, also known as interior tomography, is a classical
incomplete data tomography problem in which a part of the object (the
ROI) is imaged using only data over lines that meet the ROI. Such ROI
data are generated, e.g., when the detector width is not large enough
to contain the complete object or when researchers would like a higher
resolution scan of a small part of the object.  In this
section, we apply our theorems to understand ROI CT microlocally,
including the ring artifact at the boundary of the ROI.  We should
point out that practitioners are well aware of the ring artifacts (see
e.g., \cite{EHMCCL-ringROI, CHrB-ringROI}).  Important related work
has been done to analyze the ROI problem (e.g., \cite{FRS, FFRS,
Quinto93, RZ, KR1996, KLM, KR1996}).

First, note that Theorem \ref{thm:visible}\,\ref{thm:all} implies that
all singularities of $f$ in the interior of the ROI are recovered.
This is observed in Figure \ref{fig:ROI}.  If the ROI were not
convex, then all singularities in the interior of its convex hull
would be visible.

 \begin{figure}[h!]%
\centering
 \newcommand{\ww}{0.3\linewidth}
  \includegraphics[height=\ww]{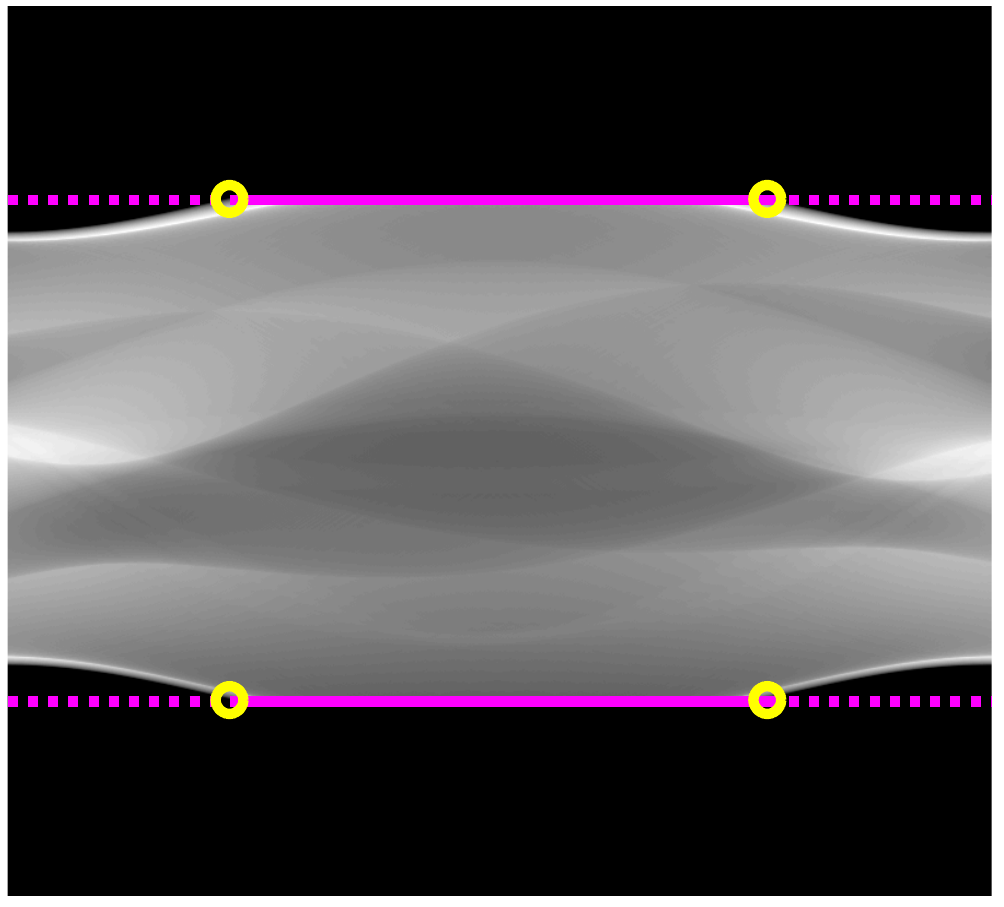}
  \includegraphics[height=\ww]{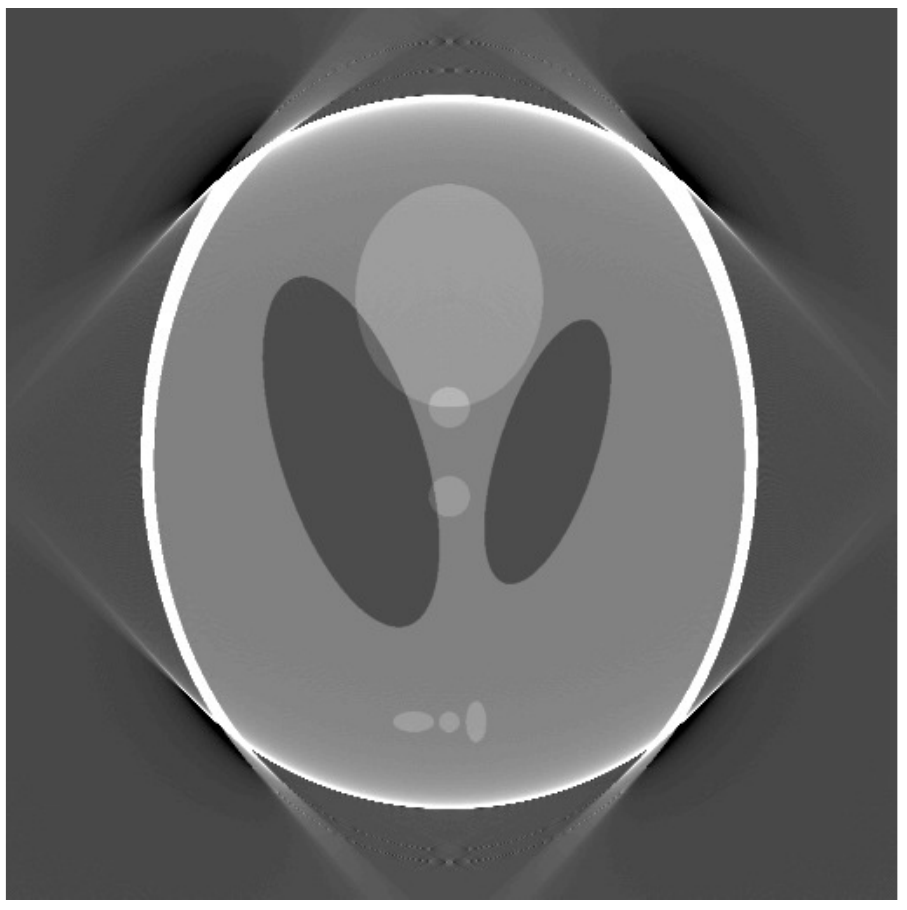}
 \includegraphics[height=\ww]{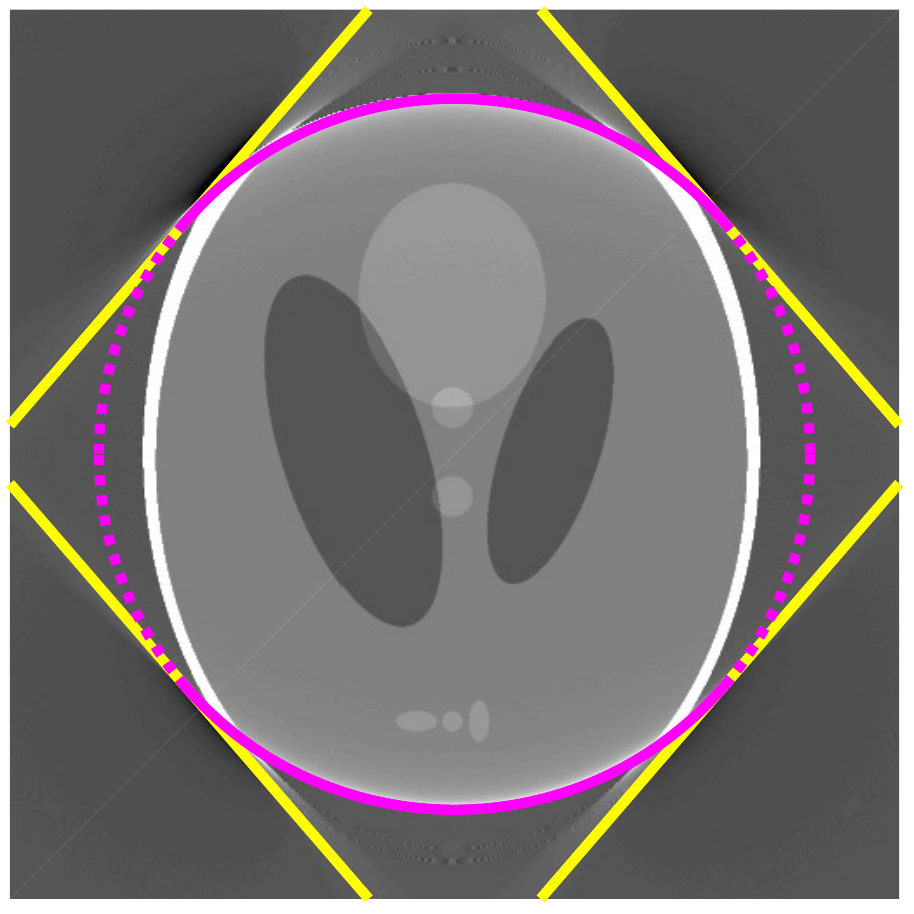}
 \caption{\emph{Left:} ROI data taken within a disk of radius $0.8$
centered at the origin, $p\in [-0.8,0.8]$. The boundary of $\tA$ is
highlighted in magenta.  \emph{Center:} FBP-reconstruction.
\emph{Right} Same FBP reconstruction as in the center image,
highlighting the $x_{b}$-curve of artifacts in magenta and the
object-dependent streak artifacts in yellow.}\label{fig:ROI} 
\end{figure}

The boundary of the sinogram in Figure~\ref{fig:ROI} is given by
horizontal lines $p=\pm 0.8$.  Since $p'=0$, the $\xb$-curve
\eqref{def:xb} is given by $\xb\paren{\thbar(\vp)}=0.8\cdot
\thbar(\vp)$, which is a circle of radius $0.8$.  The $\xb$-artifact-circle 
is highlighted in the right reconstruction of Figure
\ref{fig:ROI}, but it can be also be seen clearly in the top and
bottom of the center reconstruction, even without the highlighting.
However, the artifact circle does not extend outside the object (as
represented by the dotted magenta curve in the reconstruction and
which comes from the dotted segments of $\bd(\tA)$ in the sinogram)
because $Rf$ is zero near the corresponding lines.  Theorem
\ref{thm:curve}\,\ref{no sing}\ref{Rf=0} can be used to explain the
invisible curve.

One also sees object-dependent artifacts described by Theorem
\ref{thm:streak}\,\ref{thmPart:object dep} in
Figure~\ref{fig:ROI}.  For example, streak artifacts occur on 
the lines $\tL\vppo$ corresponding to the four circled points
$\vppo$ in $\bd(\tA)$ in the sinogram.
These lines $\tL\vppo$ are tangent to the outer boundary of the
skull, therefore $f$ has wavefront set directions normal to these
lines, and this causes the artifacts by Theorem
\ref{thm:streak}\,\ref{thmPart:object dep}

In general, one can show that if the ROI is strictly convex with
smooth boundary then the $\xb$-curve of artifacts traces the boundary
of the ROI. The proof is an exercise using the parametrization in
$\vpp$ of tangent lines to this boundary.

\subsection{The general case}\label{sect:general} The reconstruction
in Figure \ref{fig:general artifacts} illustrates all of our cases in
one.  In that figure, we consider a general incomplete data set with a
rectangular region cut out of the sinogram leading to all considered
types of artifacts. Now, we describe the resulting artifacts.  In
Figure~\ref{fig:general artifacts} the horizontal sinogram boundaries
at $p = \po =\pm 0.35$ for $\phi \in \bparen{\frac{7}{18}\pi,
\frac{11}{18}\pi}$ are displayed in solid magenta line. As in the ROI
case, on these boundaries, we have $p' = 0$ and thus circular arcs
of radius $\po$ for the given interval for $\vp$ are added in the
reconstruction (as indicated by solid magenta).  As predicted by
Theorem \ref{thm:streak}\,\ref{thmPart:nonsmooth}, each of the four
corners produce a line artifact as marked by the yellow solid lines in
the right-hand reconstruction, and they align tangentially with the
ends of the curved artifacts.
\begin{figure}[h!]
\centering
\newcommand{\ww}{0.3\linewidth}%
\includegraphics[height=\ww]{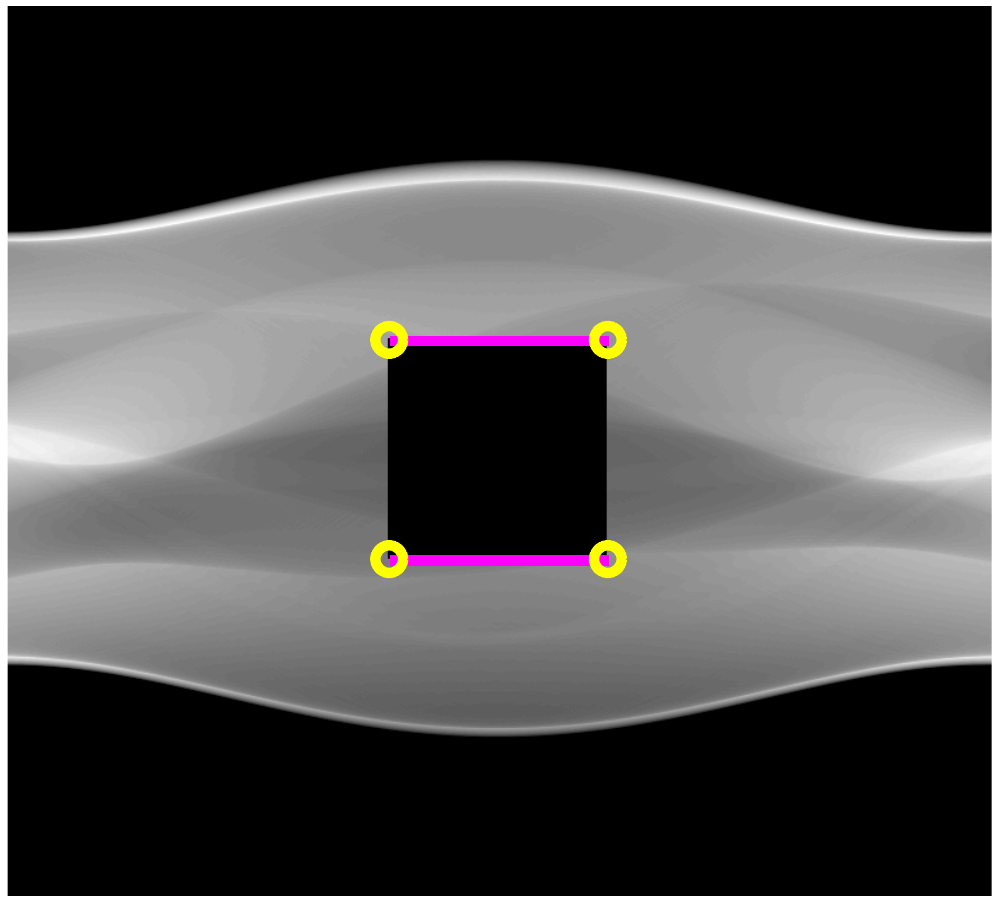}
\includegraphics[height=\ww]{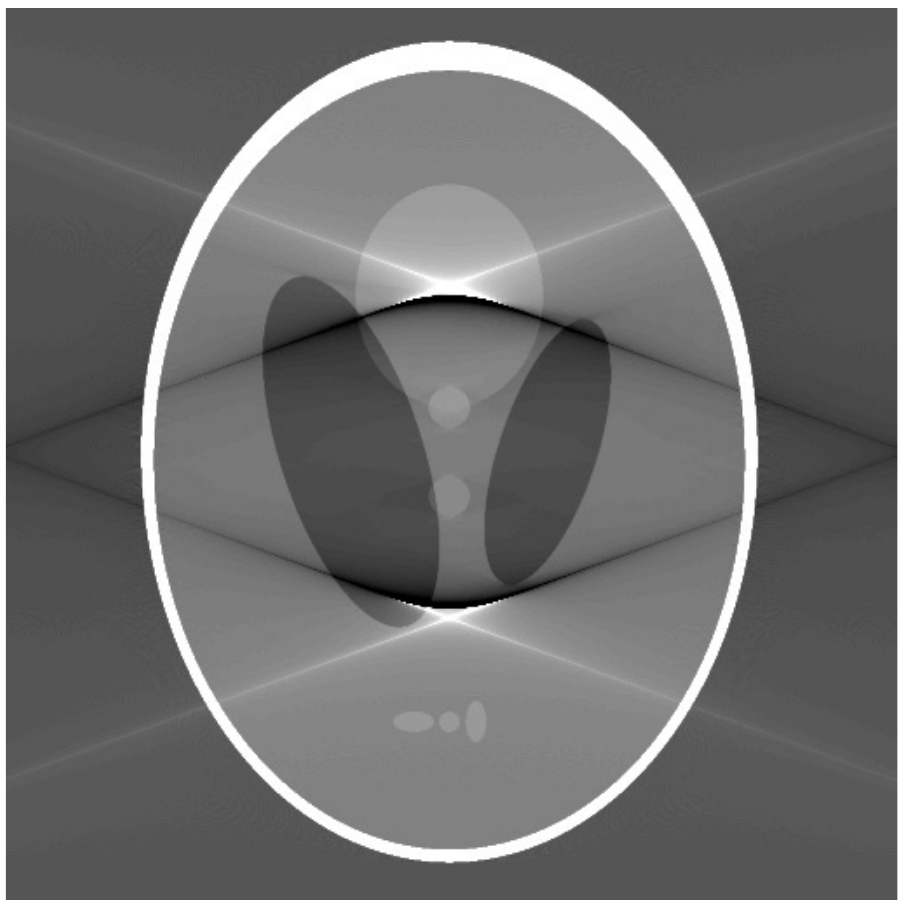}
\includegraphics[height=\ww]{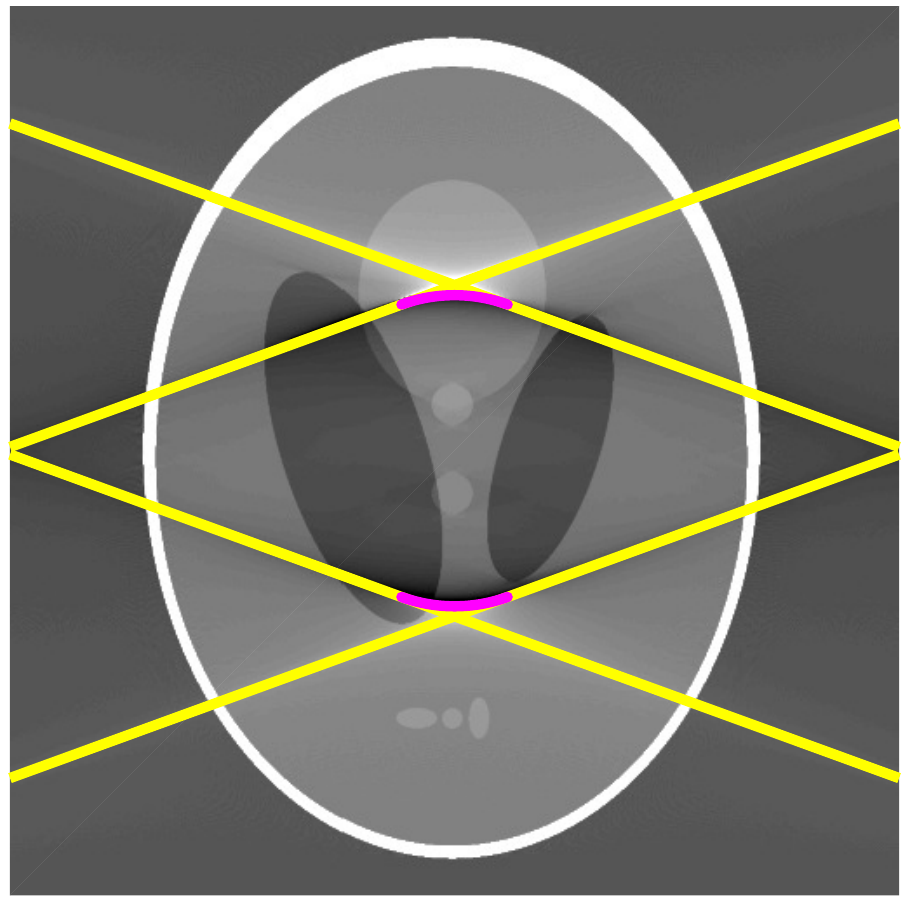}
\caption{ \textit{Left:} The sinogram for a general incomplete data
problem in which the cutoff region, $\tA$, has a locally smooth boundary
with zero and infinite slope as well as corners.  The cutout from the
sinogram is at $\frac{7\pi}{18}$ and $\frac{11\pi}{18}$, $p=\pm0.35$.
\textit{Center:} FBP reconstruction.  \textit{Right:} Same reconstruction
with the circular $\xb$-curve of artifacts highlighted in magenta and
object-independent ``corner'' streak artifacts highlighted in yellow.}
\label{fig:general artifacts}
\end{figure}

The circular arc between those lines corresponds to the top and
bottom parts of $\bd(\tA)$ as the data are, locally, constrained as in
ROI CT (see Section  \ref{sect:ROI}).

In Figure~\ref{fig:general artifacts}, there are other
object-dependent streaks corresponding to the vertical lines in the
sinogram at $\vp = \frac{7\pi}{18}$ and at $\vp=\frac{11\pi}{18}$ as
predicted by Theorem \ref{thm:streak}\,\ref{thmPart:object dep},
but they are less pronounced and more difficult to see.

\subsection{Summary}\label{sect:summary} 

We have presented reconstructions that illustrate all of types of
incomplete data and each of our theorems from Section
\ref{sect:results}.  All artifacts arise because of points $\vppo\in
\bd(\tA)$, and they fall into two categories.
\begin{itemize}
\item Streak artifacts on the line $\tL\vppo$:
\begin{itemize}
\item Object-dependent streaks occur when $\bd(\tA)$ is smooth at
$\vppo$ and a singularity of $f$ is normal to $\tL\vppo$.  \item
Object-independent streaks occur when $\bd(\tA)$ is nonsmooth at
$\vppo$.
\end{itemize}

\item Artifacts on curves are always object-independent, and they are
generated by the map $\vp\mapsto \xb\paren{\thbar(\vp)}$ from parts of
$\bd(\tA)$ that are smooth and of small slope. 
\end{itemize}

\section{Strength of added artifacts}\label{sect:strength}

In this section, we go back to parametrizing lines by $\thp\in \Sor$.

Using the Sobolev continuity of $Rf$, one can measure the strength in
Sobolev scale of added artifacts in several useful cases.  First, we
define the Sobolev norm \cite{Rudin:FA, Petersen}.  We state it for
distributions, therefore, it will apply to functions $f\in \Lloc(D)$.

 \begin{definition}[Sobolev wavefront set
\cite{Petersen}]\label{def:Hs norm} For $s\in \rr$, the Sobolev space
   $H_s(\rn)$ is the set of all distributions with locally
   square-integrable Fourier transform and with finite Sobolev norm:
   \bel{Hs norm} \norm{f}_s: = \paren{\int_{y\in \rn} \abs{\Fc f(y)}^2
     (1+\norm{y}^2)^s\,dy}^{1/2}<\infty.\ee Let $f$ be a distribution and
   let $\xo\in \rn$ and $\xio\in \rn\smo$.  We say $f$ is in $H^s$ at $\xo$
   in direction $\xio$ if there is a cutoff function $\psi$ at $\xo$ and an
   open cone $V$ containing $\xio$ such that the \emph{localized and
     microlocalized Sobolev seminorm} is finite: \bel{microlocal Hs
     seminorm} \norm{f}_{s,\psi,V}:= \paren{\int_{y\in V}
     \abs{\Fc\paren{\psi f}(y)}^2 (1+\norm{y}^2)^s\,dy}^{1/2}<\infty.\ee

If \eqref{microlocal Hs seminorm} does not hold for any cutoff
function at $\xo$, $\psi$, or any conic neighborhood $V$ of $\xio$,
then we say that $(\xo,\xio)$ is in the Sobolev wavefront set of $f$
of order $s$, $(\xo,\xio)\in \WF_s(f)$.
\end{definition}

An exercise using the definitions
shows that $\WF(f) = \cup_{s\in\rr} \WF_s(f)$ (see
\cite{Friedlander98}).

The Sobolev wavefront set can be defined for measurable functions $g$ on
$\Sor$ using the identification \eqref{def:tg} that reduces to this
definition for $\tg(\vp,p)=g\paren{\thbar(\vp),p)}$.

Note that this norm on distributions on $\Sor$ is not the typical
$H_{0,s}$ norm used in elementary continuity proofs for the Radon
transform (see e.g., \cite[equation (2.11)]{HahnQ}), but this is the
appropriate norm for the continuity theorems for general Fourier
integral operators \cite[Theorem 4.3.1]{Ho1971}, \cite[Corollary
4.4.5]{Duistermaat}.

Our next theorem gives the strength in Sobolev scale of added
singularities of $\LA f$ under certain assumptions on $f$. It uses the
relation between microlocal Sobolev strength of $f$ and of $Rf$,
\cite[Theorem 3.1]{Quinto93} and of $g$ and $R^* g$, which is given in
Proposition \ref{prop:Sobolev R*} (see also \cite{KLM} for related
results).

\begin{theorem}\label{thm:Sobolev sing}  Let $f\in \LD$ and let
$A\subset \Sor$ satisfy Assumption \ref{hyp:A}.  Let $\thpo\in \bd(A)$ and
assume $Rf\thpo\neq 0$ and $f$ is smooth normal to $L\thpo$, i.e.,
$\WFLo(f)=\emptyset$.
\begin{enumerate}[label=\Alph*.]
\item \label{smooth bndy not vertical} Assume $\bd(A)$ is smooth and
not vertical at $\thpo$. Let $\xb = \xb(\tho)$ be given by
\eqref{def:xb} and let $\om\neq 0$.  Then, $\LA f$ is in $H_s$ for
$s<0$ at $\xio=(\xb,\om\thbar(\tho))$ and $\xio\in \WF_{0}(\LA f)$.
Thus, there are singularities above $\xb$ in the $0$-order wavefront
set of $\LA f$.

\item \label{nonsmooth bndy} Now, assume $\bd(A)$ has a corner at
$\thpo$ (see Definition \ref{def:smooth}).  Then for each $(x,\xi)\in
\NLo$, $(x,\xi)\in \WF_{1}(\LA f)$ and, except for two points on
$L\thpo$, $\LA f$ is in $H_s$ for $s<1$ at $(x,\xi)$.  If one of the
two one-sided tangent lines to the corner is vertical, then there is
only one such point.
\end{enumerate}
\end{theorem}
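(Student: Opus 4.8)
The plan is to factor the reconstruction as $\LA f = R^*\paren{\Lambda\paren{\onea Rf}}$ and to follow a single singularity through the three operations—multiplication by $\onea$, the filter $\Lambda$, and backprojection $R^*$—keeping careful account of the microlocal Sobolev order at each stage. First I would extract the two hypotheses. Since $f$ is smooth normal to $L\thpo$, \cite[Theorem 3.1]{Quinto93} shows that $Rf$ is microlocally smooth at $\thpo$ in every codirection with nonzero $dp$-component, while $Rf\thpo\neq0$ records that the leading amplitude there is nonzero. Hence, in all the codirections that survive the filter (those with nonzero $\sigma_p$, the variable dual to $p$), the singularities of $\onea Rf$ near $\thpo$ are produced \emph{entirely} by the jump of $\onea$ across $\bd(A)$, with nonvanishing amplitude $Rf\thpo$. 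This nonvanishing is exactly what upgrades the eventual wavefront inclusions to the sharp statements $\xio\in\WF_0(\LA f)$ and $\NLo\subset\WF_1(\LA f)$.

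For Part~\ref{smooth bndy not vertical}, near $\thpo$ the function $\onea$ is the characteristic function of a region with smooth, non-vertical boundary, so $\onea Rf$ is a conormal distribution along the curve $\bd(A)$, of microlocal Sobolev order $1/2$ (a jump), with codirection $N^*\bd(A)$ spanned by $p'(\tho)\,d\vp-dp$, which has nonzero $\sigma_p$. Applying $\Lambda$, an elliptic order-one pseudodifferential operator in this codirection, lowers the order by one to $-1/2$; applying $R^*$ and invoking Proposition~\ref{prop:Sobolev R*}, the canonical relation of $R^*$ carries $\paren{\thpo,N^*\bd(A)}$ to $\paren{\xb(\tho),\om\tho}$ and takes the conormal-to-$\bd(A)$ singularity to a conormal singularity along the $\xb$-curve, raising the order by $1/2$ back to $0$. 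Ellipticity then yields both $\xio\in\WF_0(\LA f)$ and membership in $H_s$ for $s<0$, which is precisely the claim.

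For Part~\ref{nonsmooth bndy}, near $\thpo$ the set $A$ looks like a sector with vertex $\thpo$ whose edges are the two one-sided boundary curves, so $\onea Rf$ is, up to a smooth nonzero factor $Rf\thpo$, the indicator of that sector. In a codirection $\lambda(t,-1)$ at $\thpo$ that is \emph{not} conormal to either edge, the only singularity is the conormal-to-the-point $\sparen{\thpo}$ contribution from the corner, which has microlocal Sobolev order $1$; applying $\Lambda$ (elliptic since $\sigma_p\neq0$) lowers this to order $0$. Under $R^*$ the base point $\thpo$ backprojects to the whole line $L\thpo$, and the codirection $\lambda(t,-1)$ is sent to the covector $\om\tho$ over the point $p_0\tho+t\thoperp\in L\thpo$. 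The essential point, supplied by Proposition~\ref{prop:Sobolev R*}, is that here $R^*$ converts a codimension-two (point) singularity into a codimension-one (line) singularity, so the Sobolev order rises by a \emph{full} unit rather than by $1/2$, giving order $1$ along $L\thpo$. Letting $t$ range over $\rr$ sweeps out all of $\NLo$, and the nonvanishing amplitude $Rf\thpo$ makes every such covector genuinely present, so $\NLo\subset\WF_1(\LA f)$ and, away from the exceptional directions, $\LA f\in H_s$ for $s<1$.

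Finally I would isolate the two exceptional covectors, the directions $\lambda(p_i',-1)$ conormal to the two edges; under the canonical relation these map to the two points $p_0\tho+p_i'\thoperp$ on $L\thpo$, at which the edge's own $\xb$-curve (Theorem~\ref{thm:curve}) is tangent to $L\thpo$. There the corner's point singularity and the edge's conormal singularity form an intersecting pair of Lagrangians, and the tangential superposition of the streak with the $\xb$-curve produces a singularity of order strictly greater than $1$, forcing these two points out of the ``$H_s$ for $s<1$'' statement; if one edge is vertical its conormal has $\sigma_p=0$ (annihilated by $\Lambda$) and its $\xb$-point lies at infinity, leaving a single finite exceptional point. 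The main obstacle throughout is keeping the Sobolev bookkeeping sharp—in particular correctly attributing the extra half-derivative gain of $R^*$ to the drop in codimension when a point backprojects to a line (this is what distinguishes the order-$0$ curve artifact of Part~\ref{smooth bndy not vertical} from the order-$1$ streak of Part~\ref{nonsmooth bndy}), and analyzing the paired-Lagrangian tangency structure at the two exceptional points.
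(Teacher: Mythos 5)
Your proof of Part \ref{smooth bndy not vertical} is correct and is essentially the paper's argument: by the reduction \eqref{Sobolev=} the hypotheses $Rf\thpo\neq 0$ and $\WFLo(f)=\emptyset$ localize everything to the singularity of $\onea$ itself; the jump across the smooth curve $\bd(A)$ is an order-$1/2$ conormal singularity with nonvertical conormal; $\Lambda$ lowers this to $-1/2$; and Proposition \ref{prop:Sobolev R*} raises it by exactly $1/2$, giving order $0$ at $(\xb(\tho),\om\tho)$. The paper obtains the order-$1/2$ input by flattening $\bd(A)$ with a diffeomorphism and estimating the localized Fourier transform rather than by quoting conormal-distribution facts, but the bookkeeping is identical.

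Part \ref{nonsmooth bndy}, however, contains a genuine gap. The pivotal step in your argument is the claim that, because $R^*$ sends the point singularity at $\thpo$ to a singularity spread along all of $L\thpo$, ``the Sobolev order rises by a \emph{full} unit rather than by $1/2$.'' No such codimension-dependent gain exists, and it is contradicted by the very result you cite: Proposition \ref{prop:Sobolev R*}, equation \eqref{app:SobolevCorrespR*}, is an equivalence \emph{covector by covector} with the fixed shift $s\mapsto s+1/2$, independent of the geometry of $\WF(g)$. In fact the forward implication of \eqref{app:SobolevCorrespR*} shows your conclusion cannot be reached this way: your (correct) computation that the corner of $\one_{\lA}$ has order exactly $1$ in non-conormal directions gives, after $\Lambda$, a genuine order-$0$ singularity of $\Lambda\onea Rf$ at such covectors, and then \eqref{app:SobolevCorrespR*} forces the corresponding covectors of $\NLo$ to lie in $\WF_{1/2}(\LA f)$ --- i.e., $\LA f$ is \emph{not} microlocally $H_{1/2}$ there, so no property of $R^*$ can produce the claimed ``$H_s$ for $s<1$'' regularity from an order-$0$ input. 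The paper's proof never departs from the uniform $+1/2$ shift; instead it flattens the corner to the quadrant $\lA$, writes the localized Fourier transform as the product $S(\nu)T(\tau)$ with $S=\Oc(1/\abs{\nu})$, $T=\Oc(1/\abs{\tau})$, and assigns the non-conormal directions a \emph{weaker} singularity than the two edge-conormal directions $\pm\eta_1,\pm\eta_2$; the two exceptional points then fall out of the same machinery as the images \eqref{two xbs} of $\pm\eta_j$, with order exactly $0$ (order $1/2$ for $\onea$, minus $1$ for $\Lambda$, plus $1/2$ for $R^*$), and the vertical-edge case follows because $C^t\circ\sparen{(\tho,\po,\om\dth)}=\emptyset$ (not because $\Lambda$ ``annihilates'' that direction). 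Your treatment of the exceptional points replaces this computation by an appeal to ``paired-Lagrangian tangency'' between the streak and the $\xb$-curves, which is not substantiated and yields no exponent for them at all. Any repair of your argument must derive the regularity at generic points of $L\thpo$ directly from the Fourier decay of the corner singularity fed through the honest shifts $-1$ and $+1/2$, which is exactly what the paper's appendix does.
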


This theorem provides estimates on smoothness for more general
data sets than the limited-angle case, which was thoroughly considered
in \cite{Ka1997:limited-angle,Nguyen2015ip}. In contrast to part
\ref{smooth bndy not vertical} of this theorem, if $\bd(A)$ has a
vertical tangent at $\thpo$, then, under the smoothness assumption on
$f$, there are no added artifacts in $\LA f$ normal to $L\thpo$ (see
Theorem \ref{thm:streak}\,\ref{thmPart:object dep}).  Part \ref{smooth
bndy not vertical} of this theorem is a more precise version of
Theorem \ref{thm:curve}\,\ref{Rf neq 0}.  Under the assumptions in
parts \ref{smooth bndy not vertical} and \ref{nonsmooth bndy}, $\bd(A)$
\emph{will cause} specific singularities in specific locations on
$L\thpo$. The two more singular points in part \ref{nonsmooth bndy}
are specified in equation \eqref{two xbs}.  If one part of $\bd(A)$ is
vertical at $\thpo$, then there is only one such more singular point.

  This theorem will be proven in Section \ref{appendix:Sobolev Sing}
of the appendix.

\section{Artifact reduction}\label{sect:reduction}

In this section, we briefly describe a method to suppress the added
streak artifacts described in Theorems \ref{thm:curve} and
\ref{thm:streak}.  This is a standard technique for many
practitioners, but it is worth highlighting because it is simple and
useful. 

As outlined in Section \ref{sect:results}, the application of FBP to
incomplete data extends the data from $A\subset \Sor$ to all of $\Sor$ by
padding it with zeros on the complement of $A$. This hard truncation can
create discontinuities on $\bd(A)$ and that explains the artifacts.  These
jumps are stronger singularities than those of $Rf$ for $Rf\in
H_{1/2}(\Sor)$ since $f\in \LD = H_0(D)$.

One natural way to get rid of the jump discontinuities of
$\onea$ is to replace $\onea$ by a smooth function on $\Sor$, $\psi$,
that is equal to zero off of $A$ and equal to one on most of
$\intt(A)$ and smoothly transitions to zero near $\bd(A)$.  We also
assume $\psi$ is symmetric in the sense $\psi(\th,p) = \psi(-\th,-p)$
for all $\thp$.  This gives the forward operator \bel{def:Rp}\Rp f
\thp = \psi\thp Rf\thp\ee and the reconstruction operator
\bel{def:Lp}\Lp f=R^*\paren{\Lambda \Rp f}=R^*\paren{\Lambda \psi R
f}.\ee Because $\psi$ is a smooth function, $\Rp$ is a standard
Fourier integral operator and so $\Lp$ is a standard
pseudodifferential operator.  This allows us to show that $\Lp$ does
not add artifacts.

\begin{theorem}[Artifact Reduction Theorem]\label{thm:reduction}
Let $f\in \LD$ and let $A\subset \Sor$ satisfy Assumption \ref{hyp:A}.
Then \bel{regularity} \WF(\Lp f)\subset \WF(f).\ee Therefore, $\Lp$
does not add artifacts to the reconstruction.

Let $x\in D$, $\th\in \So$, and $\om\neq 0$. If
$\psi(\th,x\cdot\th)\neq 0$, then \bel{ellipticity}
(x,\om\th)\in \WF(\Lp f)\ \text{ if and only if }\
(x,\om\th)\in \WF(f).\ee
\end{theorem}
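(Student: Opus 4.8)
The plan is to recognize $\Lp = R^*\Lambda M_\psi R$, where $M_\psi$ denotes multiplication by the smooth symmetric function $\psi$, as an elliptic pseudodifferential operator (\psido) and then to invoke the microlocal mapping properties of \psido s. First I would record the structural facts summarized in the remark preceding the theorem: the Radon transform $R$ is an elliptic Fourier integral operator (FIO) of order $-\tfrac{1}{2}$ with a known canonical relation $C$, its dual $R^*$ is the FIO with transpose relation $C^t$, the filter $\Lambda$ is an elliptic \psido\ of order $1$, and, since $\psi\in C^\infty(\Sor)$, the operator $M_\psi$ is a \psido\ of order $0$ with symbol $\psi$, associated to the diagonal canonical relation.

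Next I would compose. Because $M_\psi$ leaves the canonical relation unchanged, $\Lambda M_\psi R$ is an FIO with canonical relation $C$, and $R^*\Lambda M_\psi R$ has canonical relation $C^t\circ C$. For the Radon line transform this composition is clean (in fact transversal) and equals the diagonal $\Delta\subset(T^*\rtwo\smo)\times(T^*\rtwo\smo)$; this is the content underlying \cite[Theorem 3.1]{Quinto93}. Hence $\Lp$ is a \psido\ of order $-\tfrac{1}{2}+1+0-\tfrac{1}{2}=0$. Part one, \eqref{regularity}, then follows immediately from the microlocality of \psido s: any \psido\ satisfies $\WF(\Lp f)\subset\WF(f)$, so $\Lp$ adds no singularities and hence no artifacts.

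For the ellipticity statement \eqref{ellipticity} I would compute the principal symbol of $\Lp$ at a covector $(x,\om\th)$ with $\om\neq 0$. Fixing such a covector, the direction $\om\th$ is normal to the unique line $L(\th,x\cdot\th)$ through $x$, so by the canonical relation only the data point $\thp=(\th,x\cdot\th)$ enters the symbol calculus (its symmetric partner $(-\th,-x\cdot\th)$ contributes equally, since $\psi$ is symmetric and $L(\th,x\cdot\th)=L(-\th,-x\cdot\th)$), carrying the factor $\psi(\th,x\cdot\th)$ from $M_\psi$. The operator obtained with $\psi\equiv 1$, namely $R^*\Lambda R$, is the standard FBP reconstruction operator, an elliptic \psido\ with nowhere vanishing symbol; therefore the symbol of $\Lp$ at $(x,\om\th)$ is a nonzero multiple of $\psi(\th,x\cdot\th)$. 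When $\psi(\th,x\cdot\th)\neq 0$ this symbol is nonzero, so $\Lp$ is elliptic at $(x,\om\th)$, and microlocal elliptic regularity gives $(x,\om\th)\in\WF(\Lp f)$ if and only if $(x,\om\th)\in\WF(f)$ (the forward inclusion is already \eqref{regularity}), which is \eqref{ellipticity}.

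The main obstacle is the rigorous verification that $C^t\circ C$ is a clean composition, so that $R^*\Lambda M_\psi R$ is genuinely a \psido, together with the precise symbol bookkeeping needed to isolate the factor $\psi(\th,x\cdot\th)$ and to confirm the nonvanishing of the symbol of $R^*\Lambda R$. These are standard facts from the microlocal analysis of the Radon transform and can be assembled from \cite[Theorem 3.1]{Quinto93}, \cite{KLM}, and the FIO composition and elliptic regularity calculus in \cite[Theorem 4.3.1]{Ho1971} and \cite[Corollary 4.4.5]{Duistermaat}.
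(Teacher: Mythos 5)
Your proposal is correct and takes essentially the same route as the paper: the paper gives no written-out proof, instead noting that since $\psi$ is smooth, $\Rp$ is a standard FIO so $\Lp$ is a standard pseudodifferential operator, and citing \cite{KLM} and the symbol calculation in \cite{Quinto1980} for the statement. Your argument---composing canonical relations to identify $\Lp$ as an order-zero \psido\ (using the Bolker condition so that $C^t\circ C$ is the diagonal), pseudolocality for \eqref{regularity}, and nonvanishing of the symbol, a nonzero multiple of $\psi(\th,x\cdot\th)$, plus microlocal elliptic regularity for \eqref{ellipticity}---is precisely the standard argument underlying those citations.
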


Theorem \ref{thm:reduction} is a special case of a known result in e.g.,
\cite{KLM} or the symbol calculation in \cite{Quinto1980} and is
stated for completeness.  This theorem shows the advantages of
including a smooth cutoff, and it has been suggested in several
settings, including limited-angle X-ray CT \cite{Ka1997:limited-angle,
FrikelQuinto2013} and more general tomography problems
\cite{FrikelQuinto-hyperplane, FrikelQuinto2015, KLM,SQWJ-ET}.  More
sophisticated methods are discussed in \cite{BJS-techreport, Borg2017}
for the synchrotron problem that is described in Section
\ref{sect:synchrotron}.

Although this artifact reduction technique does not create any
\emph{singular} artifacts in $\Lp f$, it can turn \emph{singular}
artifacts into \emph{smooth} artifacts, for example, by smoothing
$\xb$-curves.

Figure \ref{fig:general smoothed} illustrates the efficacy of this
smoothing algorithm on simulated data, and Figure \ref{fig:improved
synchrotron reconstruction} in Section \ref{sect:synchrotron}
demonstrates its benefits on real synchrotron data. 

\begin{figure}[t]
 \newcommand{\ww}{0.3\linewidth}
\centering
 \includegraphics[height=\ww]{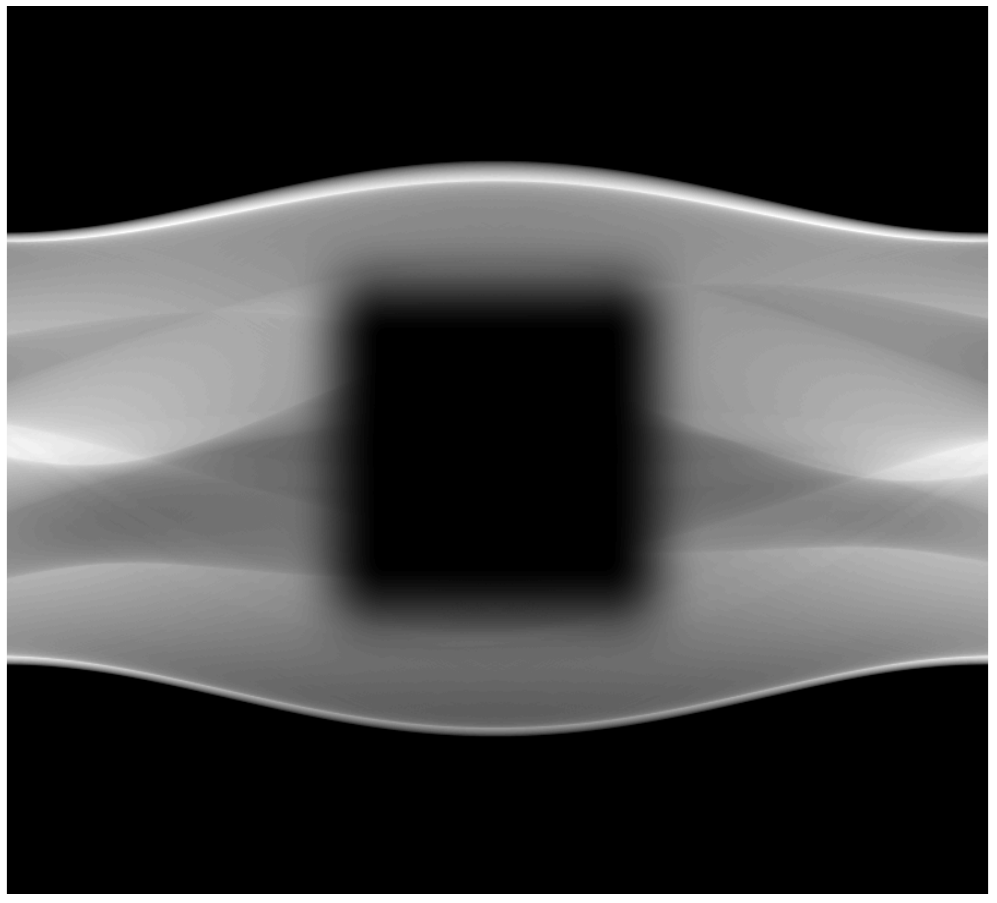}
\includegraphics[height=\ww]{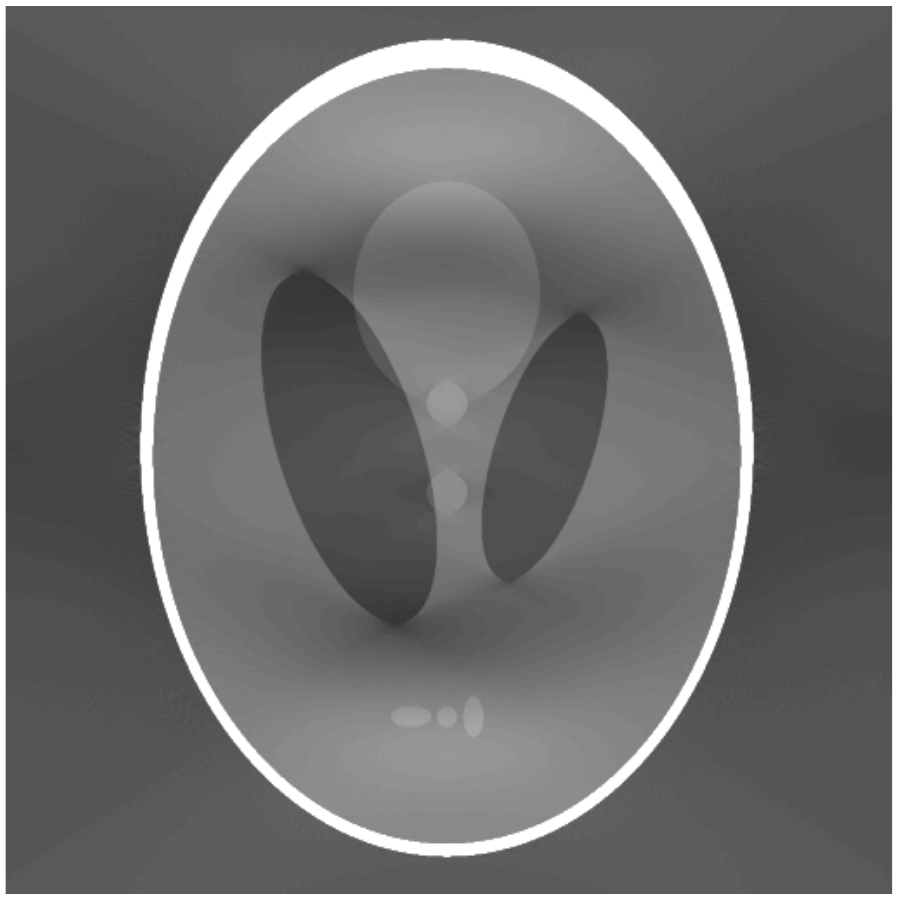}
\includegraphics[height=\ww]{SINOBOXrecon_lowcontrast}
\caption{ \emph{Left:} Smoothed sinogram.  \emph{Center:} Smoothed
reconstruction with suppressed artifacts. \emph{Right:} Reconstruction
using $\LA$, with sharp cutoff.} \label{fig:general smoothed}
\end{figure}

	\section{Application: a synchrotron 
	experiment}\label{sect:synchrotron}

	In this section, we use the identifications given in
	\eqref{otpr-conventions} and show sinograms as subsets of the $\vpp$
	plane.

	\begin{figure}[h!]
	 \newcommand{\ww}{0.3\linewidth} \centering
\includegraphics[width=\ww,height=\ww]{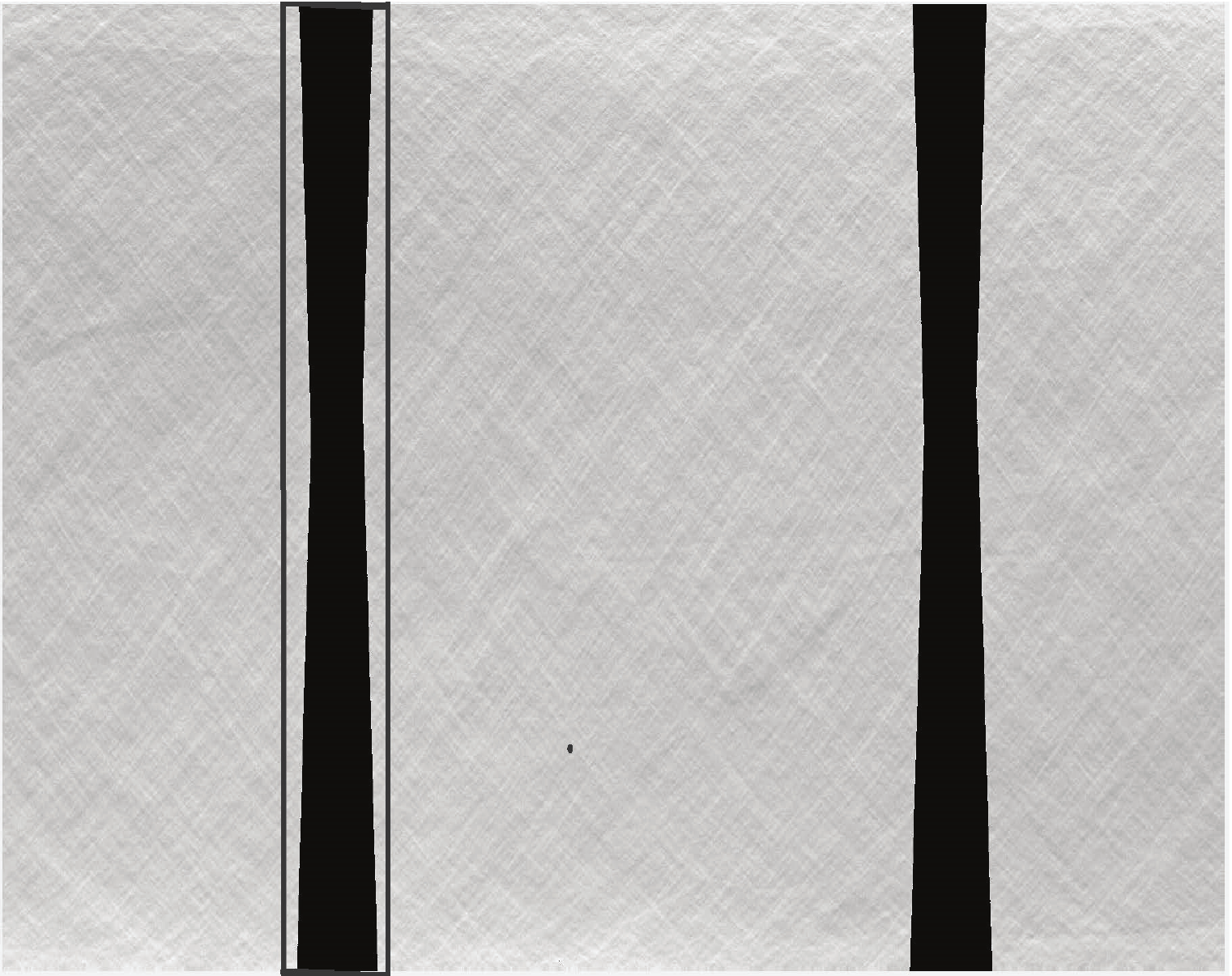}
\includegraphics[width=\ww,height=\ww]{SinSegmentedZoom}
\includegraphics[width=\ww]{IOP-Fig6-zoom-1}
	\caption{\textit{Left:} The truncated attenuation sinogram (after
	processing to get Radon transform data).  \textit{Center:} the
	enlargement of the section of $\bd(\tA)$ between the two dark vertical
	lines in the left-hand sinogram.  \textit{Right:} Zoom of the
	corresponding reconstruction.  \cite[\copyright IOP Publishing.
	Reproduced by permission of IOP Publishing.  All rights
	reserved]{Borg2017}.}\label{fig:sinogram-streaks}
	\end{figure}

	Figure \ref{fig:sinogram-streaks} shows tomographic data of a
	chalk sample (sinogram on the left and a zoomed version in the center)
	that was acquired by a synchrotron experiment
	\cite{BJS-techreport,Borg2017} (see \cite{LQP} for related work). In
	the right picture of Figure \ref{fig:sinogram-streaks} a zoom of the
	corresponding reconstruction is shown (see also Figure
	\ref{subfig:sync data fbp rec}). As can be clearly observed, the
	reconstruction includes dramatic streaks that are independent of the
	object.  These streaks motivated the research in this article since
	they were not explained by the mathematical theory at that time (such
	as in \cite{Ka1997:limited-angle, Nguyen2015ip, FrikelQuinto2013,
	FrikelQuinto2015, FrikelQuinto-hyperplane}).

	\begin{figure}[h!]
	\centering\includegraphics[height=4cm]{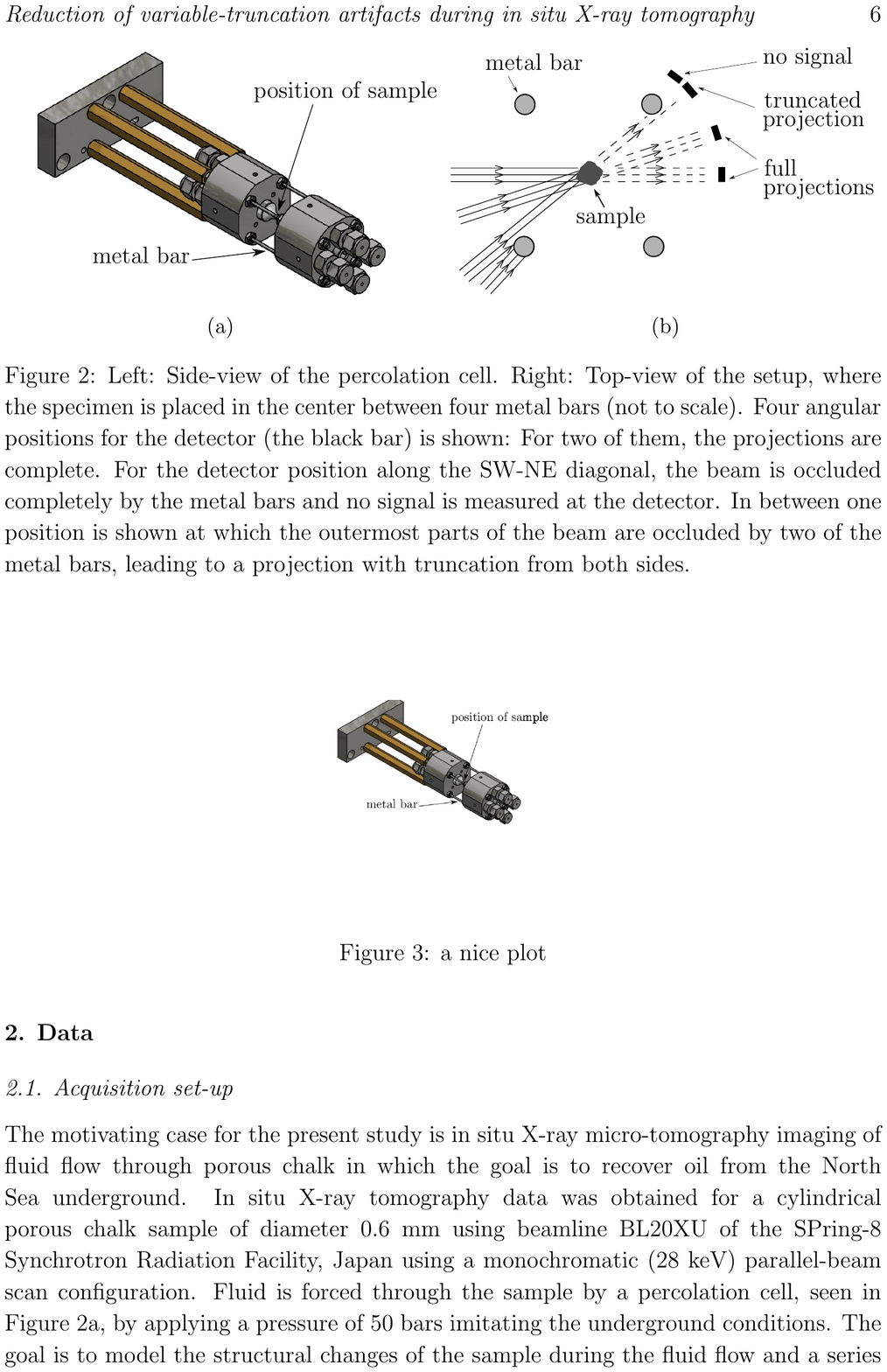}
	\includegraphics[height=4cm]{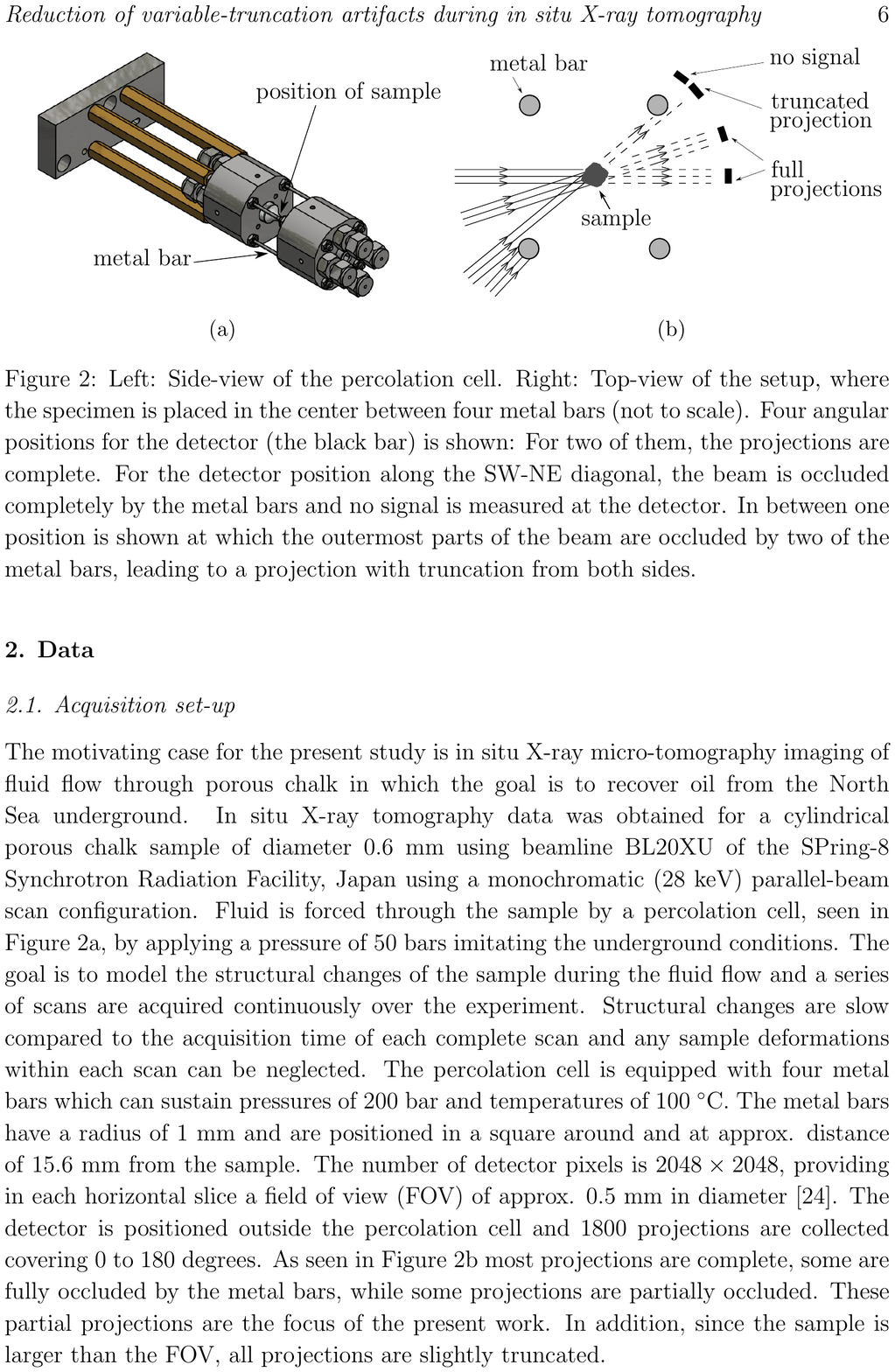}

		\caption{Data acquisition setup for the synchrotron experiment
		  \cite[\copyright IOP Publishing. Reproduced by permission of IOP
		  Publishing.  All rights reserved]{Borg2017}.}
		\label{fig:synchrotron setup}
	\end{figure}

	Taking a closer look at the attenuation sinogram and its zoom in Figure 
	\ref{fig:sinogram-streaks} a staircasing is revealed with
	vertical and horizontal boundaries.  This is a result of X-rays being
	blocked by four metal bars that help stabilize the percolation chamber
	(sample holder) as the sample is subjected to high pressure during
	data acquisition, see Figure \ref{fig:synchrotron setup}.  More
	details are given in \cite{Borg2017}.

	Because the original reconstructions of this synchrotron data used a
	sharp cutoff, $\onea$, the reconstructions suffer from severe
	streak artifacts as can be seen in Figure \ref{subfig:sync data fbp
	rec}.  These artifacts are exactly described by Theorem
	\ref{thm:streak}\,\ref{thmPart:nonsmooth} in that each corner of the
	sinogram gives rise to a line artifact in the reconstruction (cf.
	left and center image in Figure \ref{fig:sinogram-streaks}).  The
	authors of \cite{Borg2017} then use a smooth cutoff function at
	$\bd(\tA)$ that essentially eliminates the streaks.  The resulting
	reconstruction is shown in Figure \ref{subfig:sync data fbp rec ours}
	below.

	\begin{figure}[h!]
	\centering
	\newcommand{\www}{0.455\textwidth}
	\newcommand{\ww}{0.225\textwidth}
	\begin{subfigure}[t]{0.8\textwidth}
	\begin{multicols}{2}\begin{center}
\includegraphics[width=\www, height = \www]{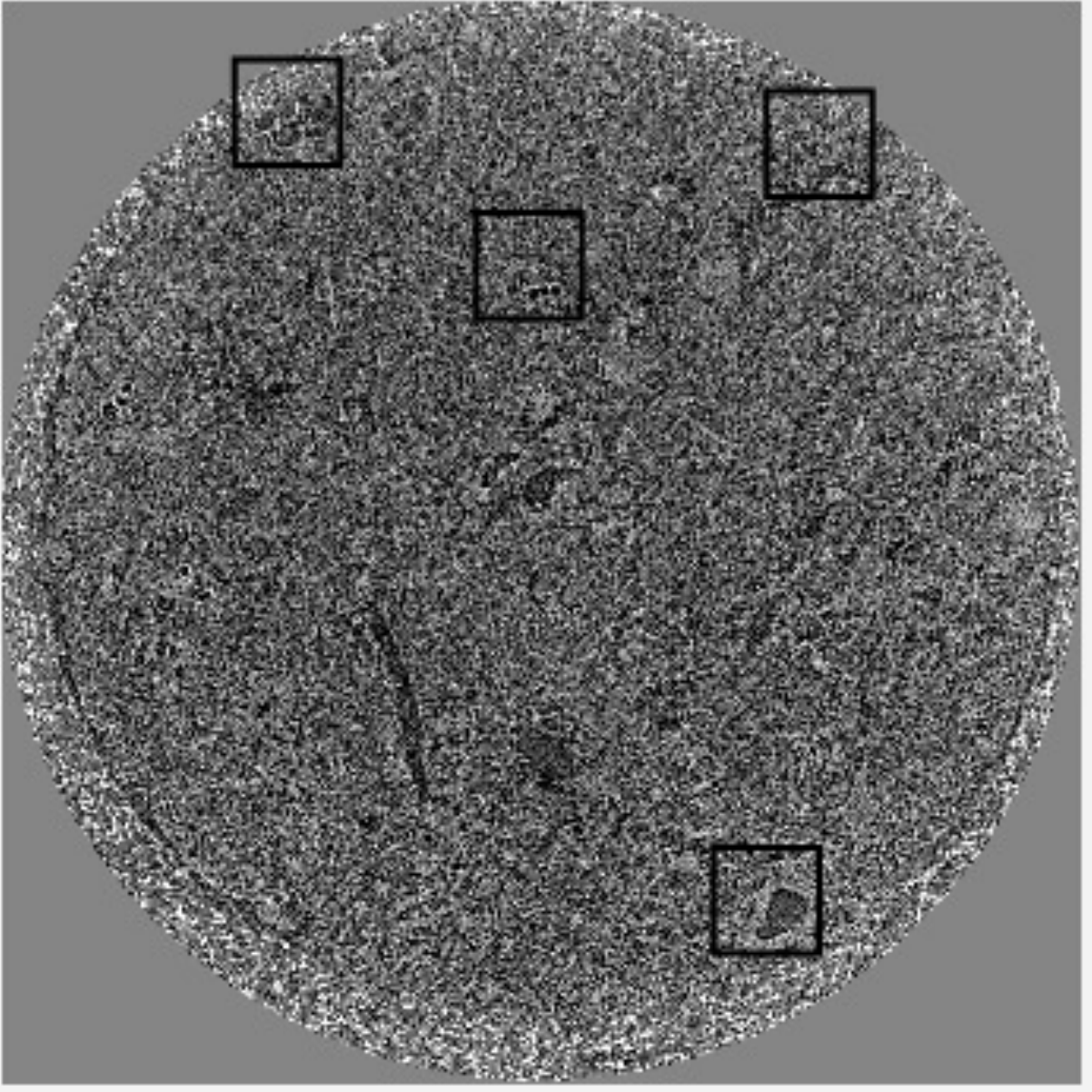}\end{center}
	\newpage\begin{center}
\includegraphics[width=\ww]{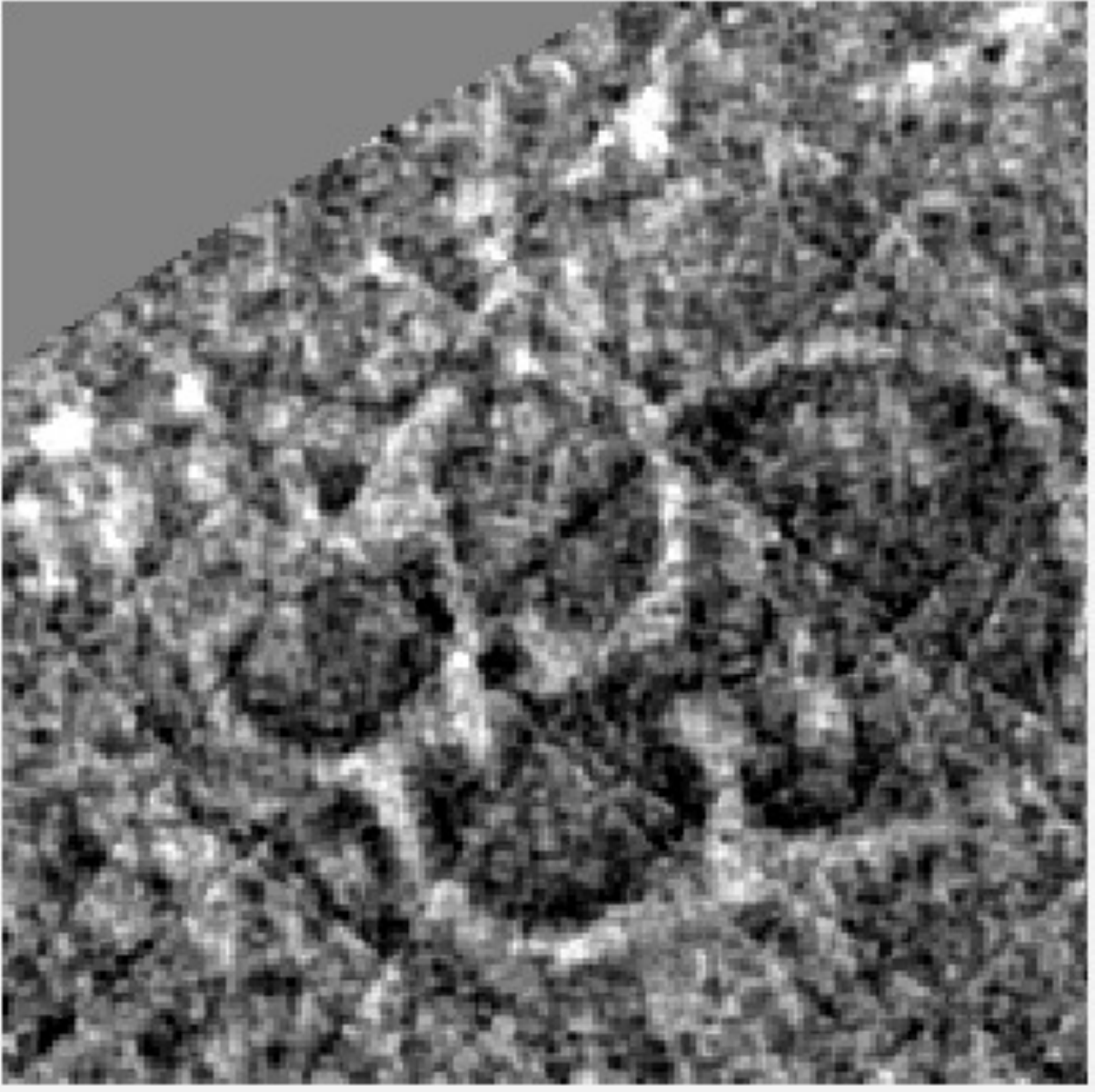}
\includegraphics[width=\ww]{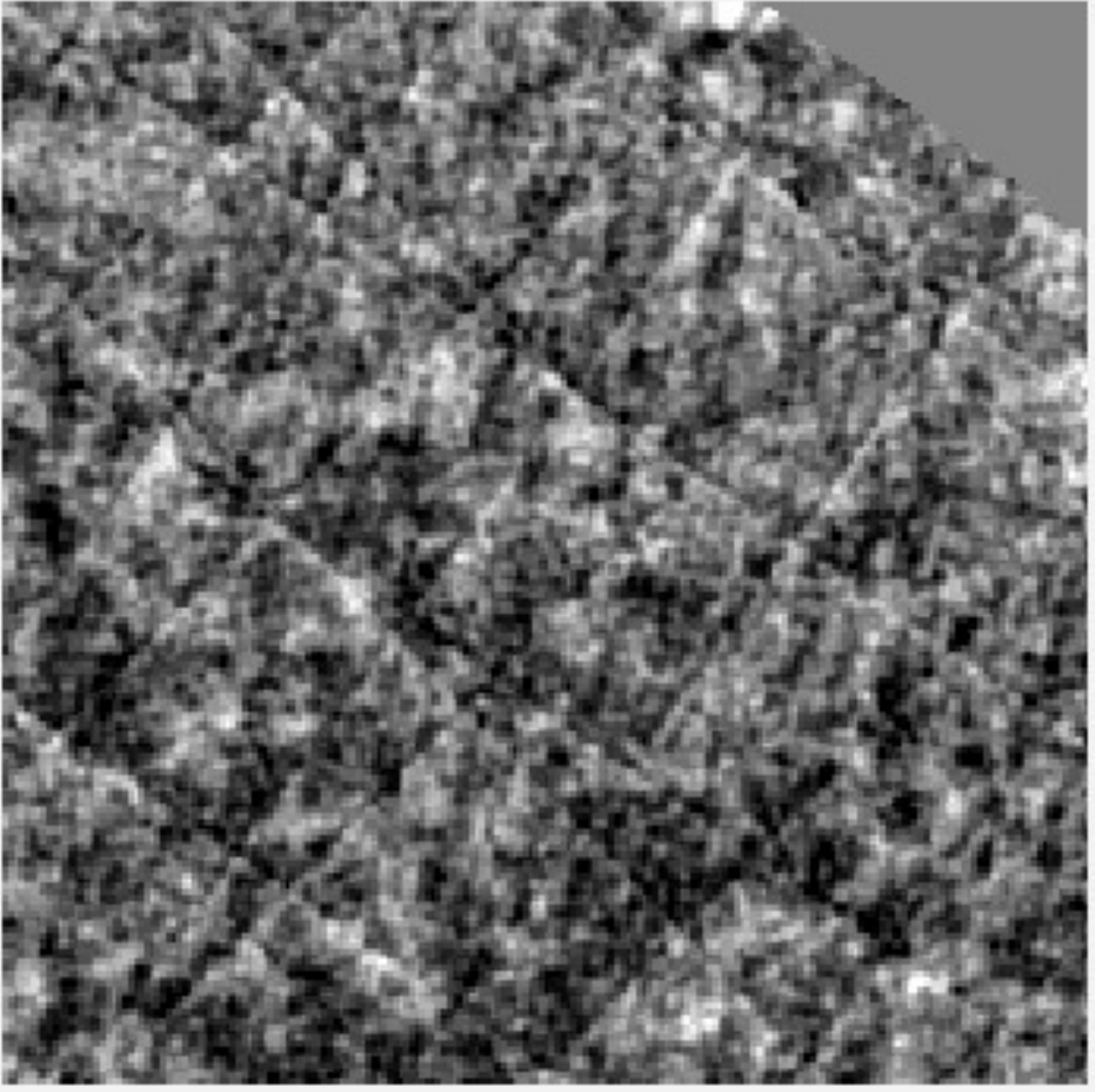}
\includegraphics[width=\ww]{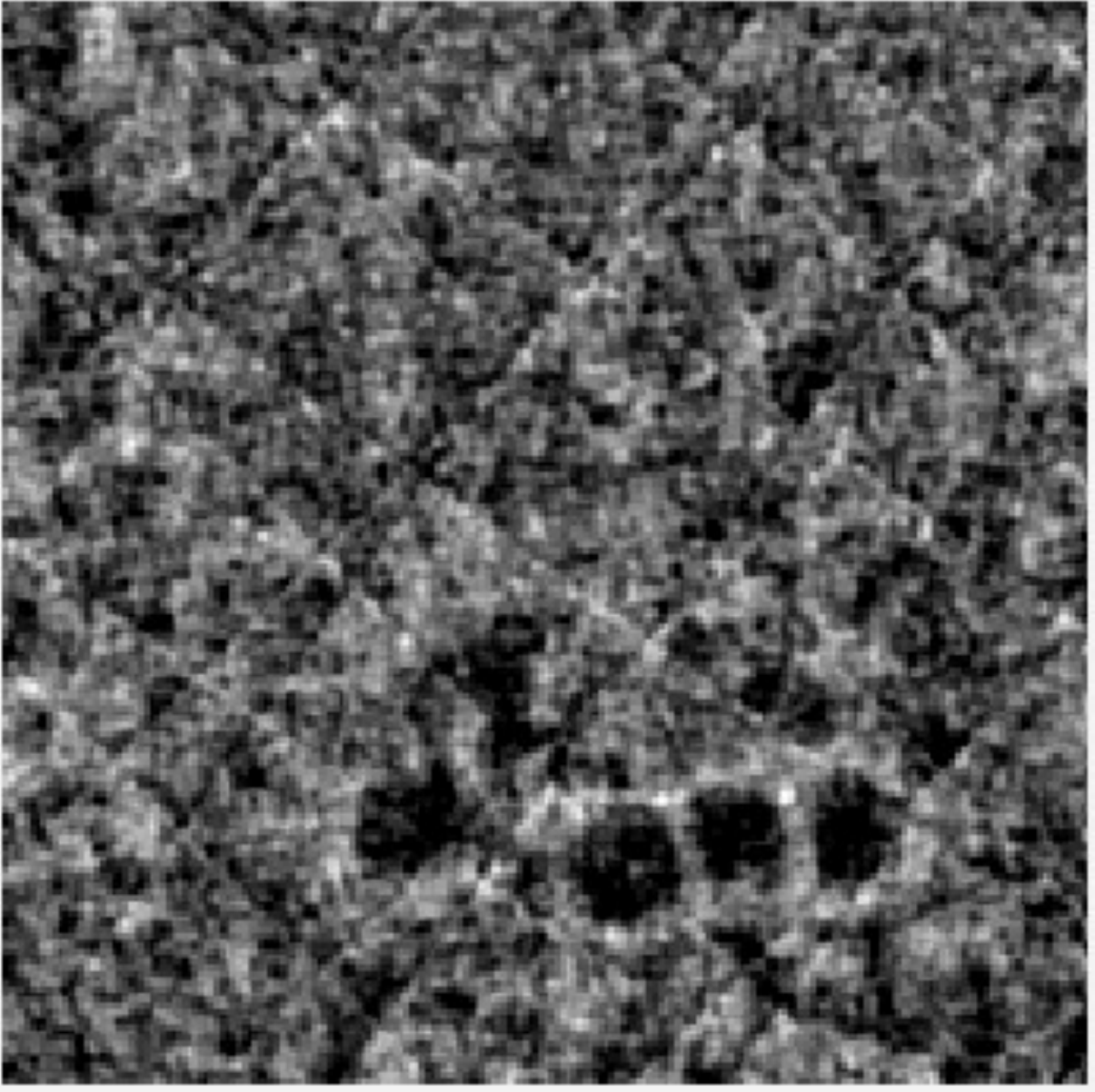}
\includegraphics[width=\ww]{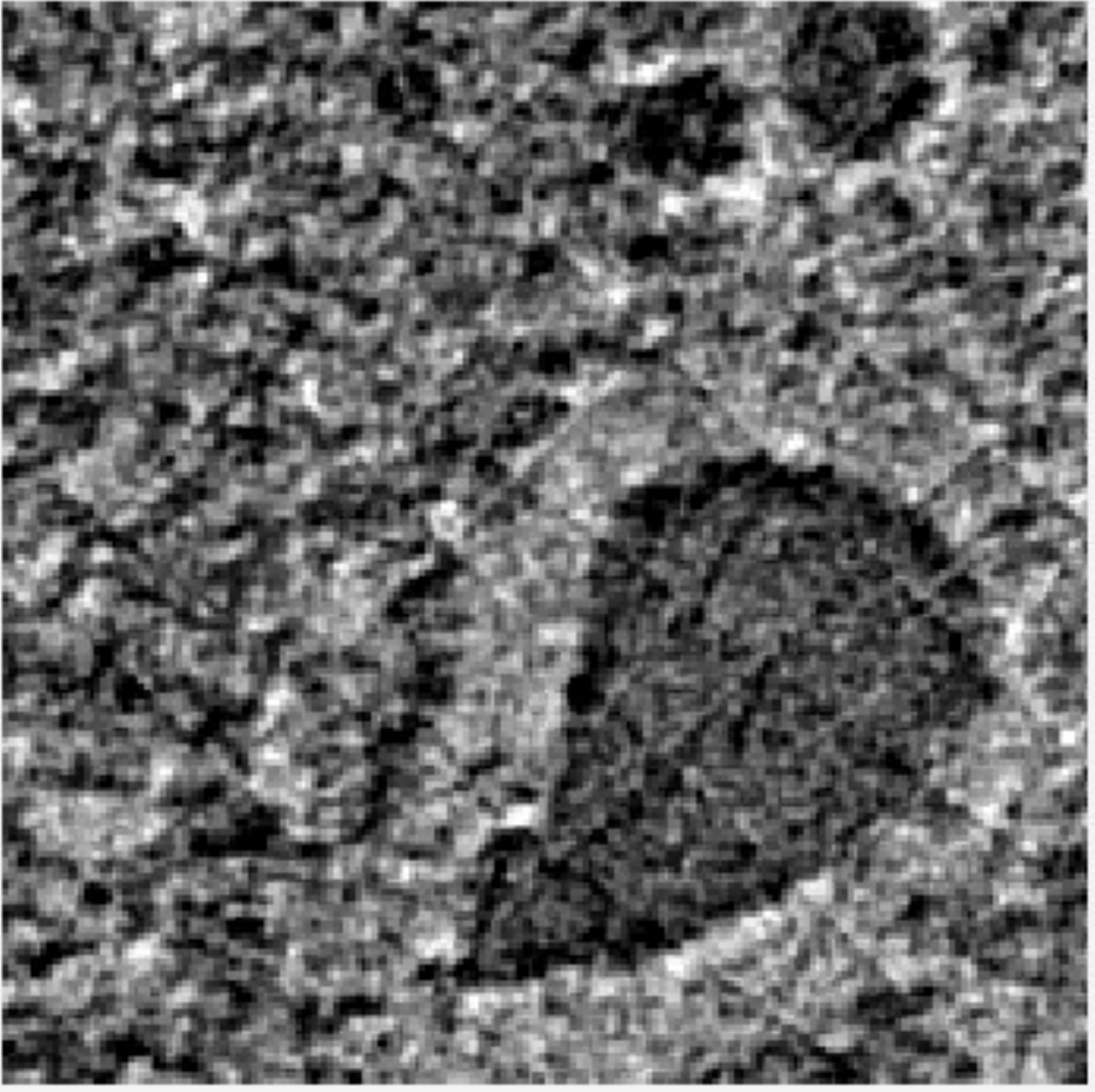}
	\end{center}
	\end{multicols}
		\caption{Standard FBP reconstruction}
		\label{subfig:sync data fbp rec}
	\end{subfigure}
	\hskip2ex
	\begin{subfigure}[t]{0.8\textwidth}
	\begin{multicols}{2}
	\begin{center}\includegraphics[width=\www, 
	height = \www]{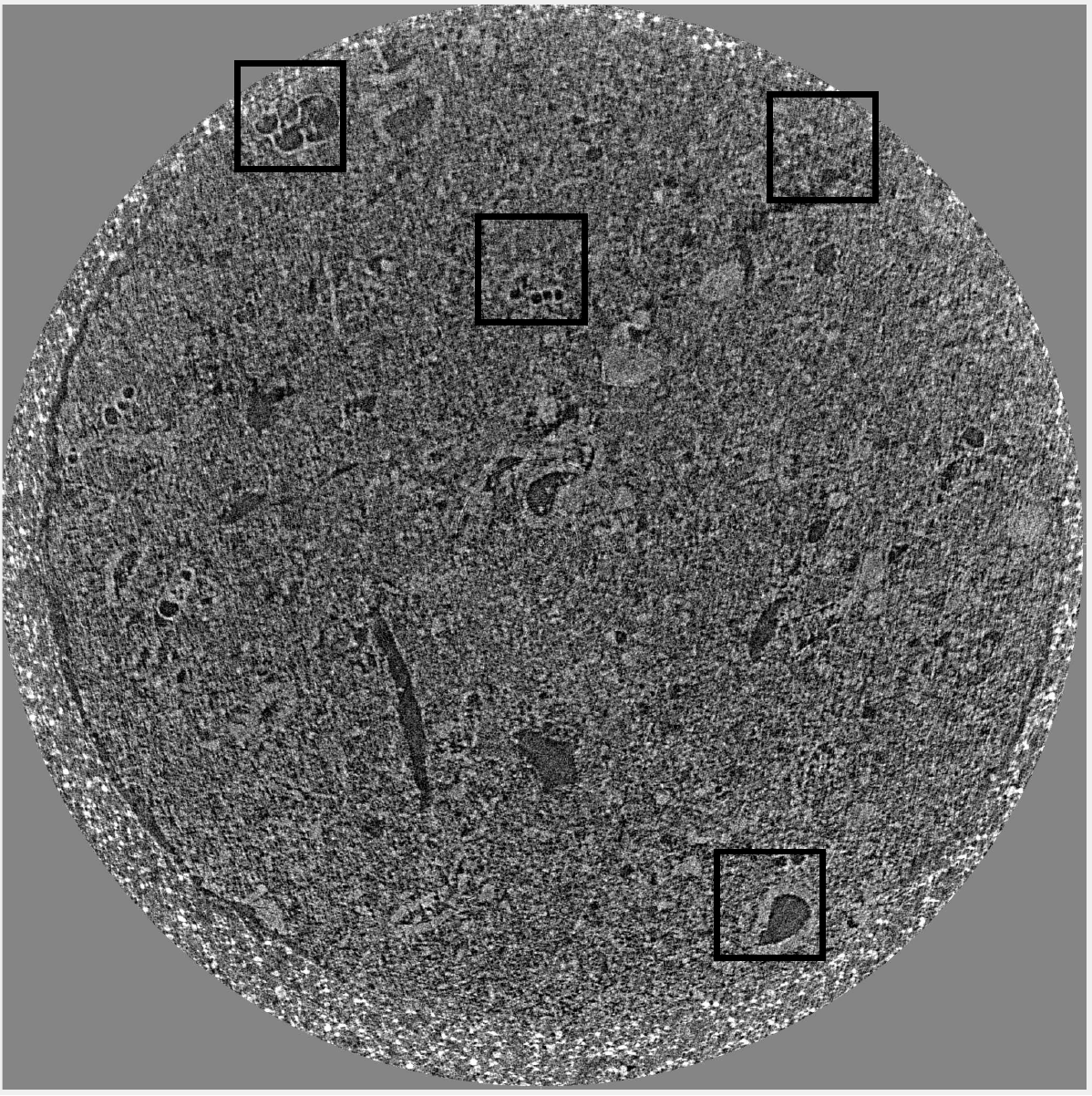}\end{center}
	\newpage
	\begin{center}
	\includegraphics[width=\ww]{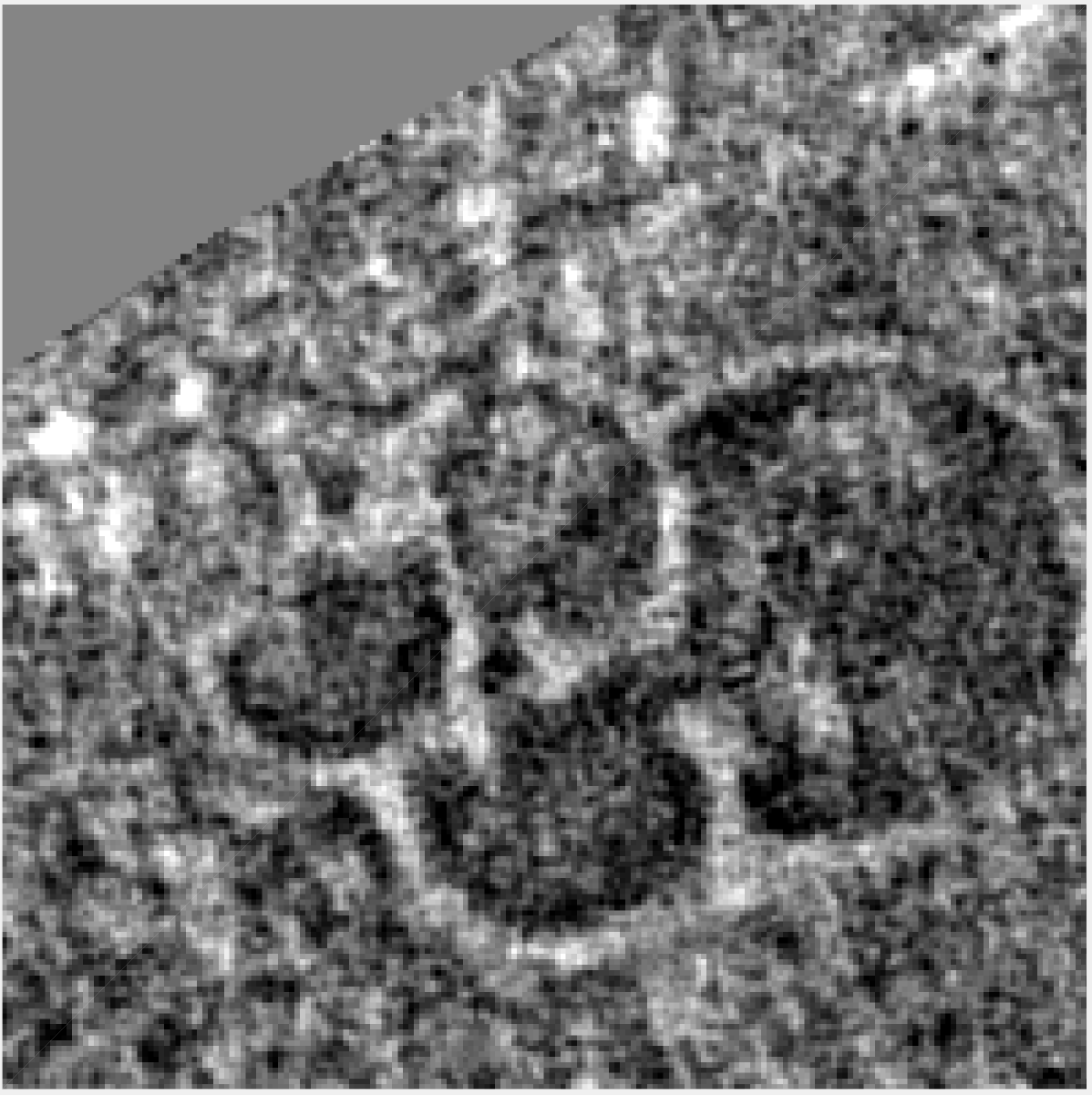}
	\includegraphics[width=\ww]{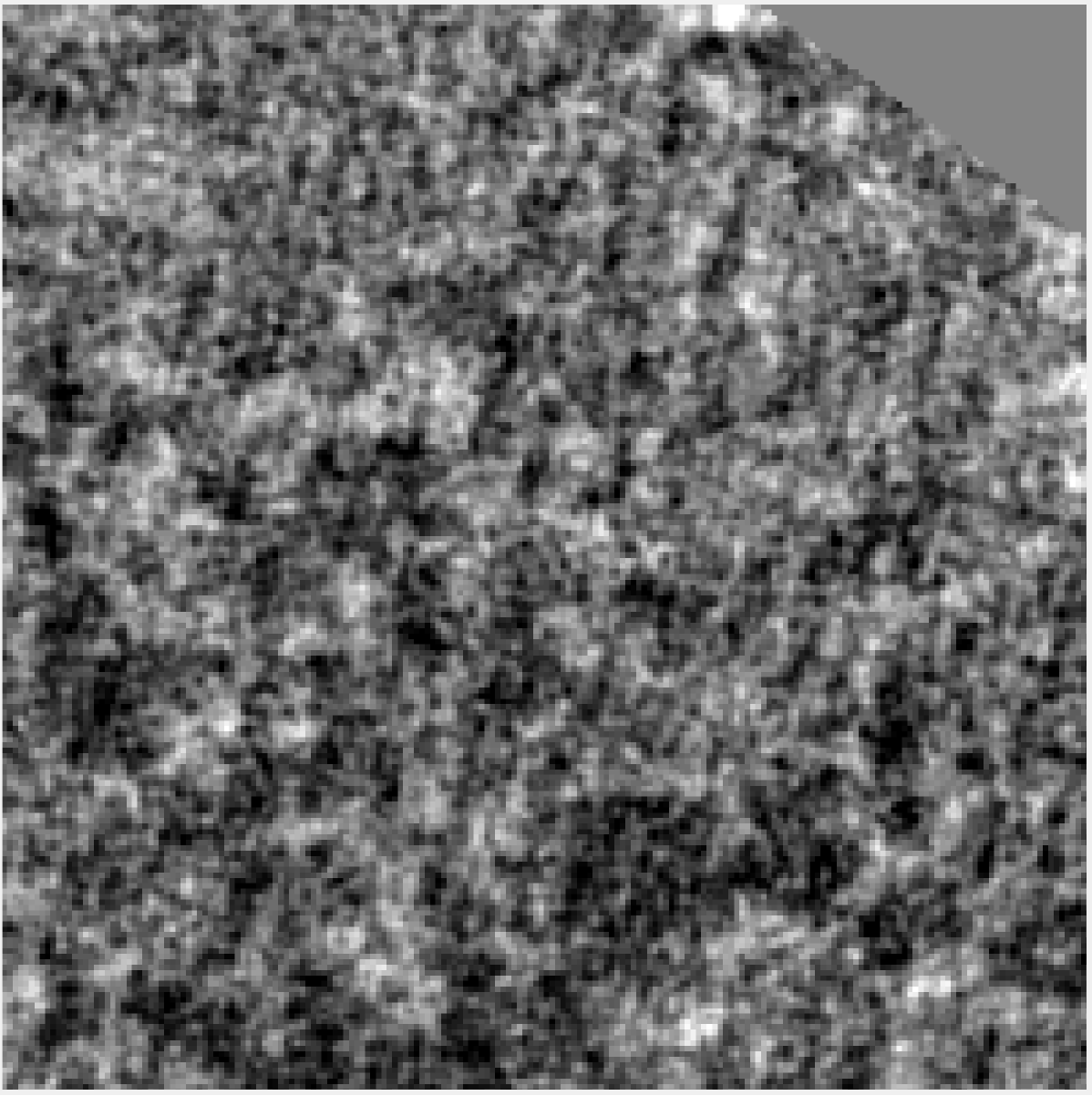}
	\vspace{1mm}\includegraphics[width=\ww]{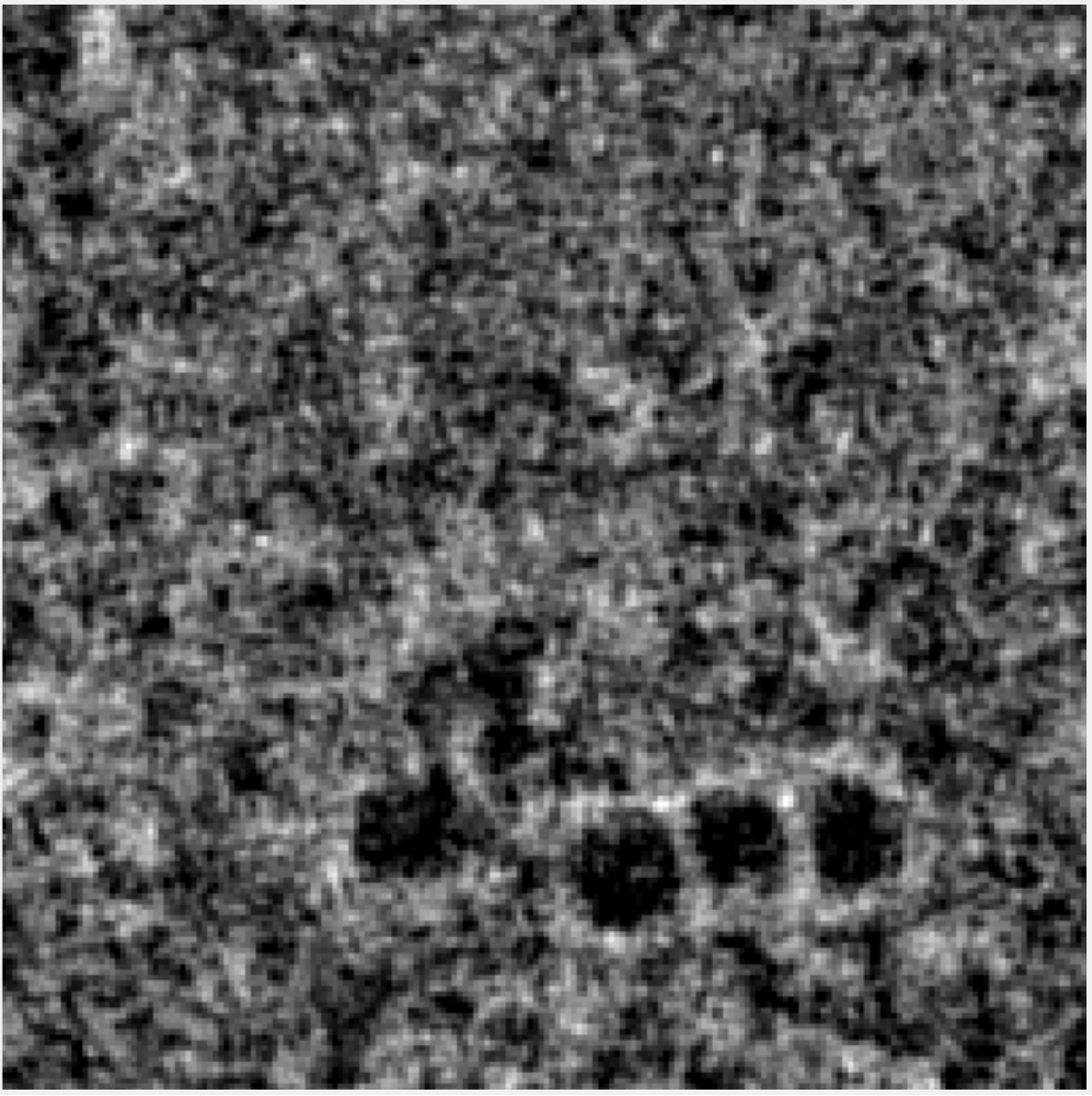}
	\vspace{1mm}\includegraphics[width=\ww]{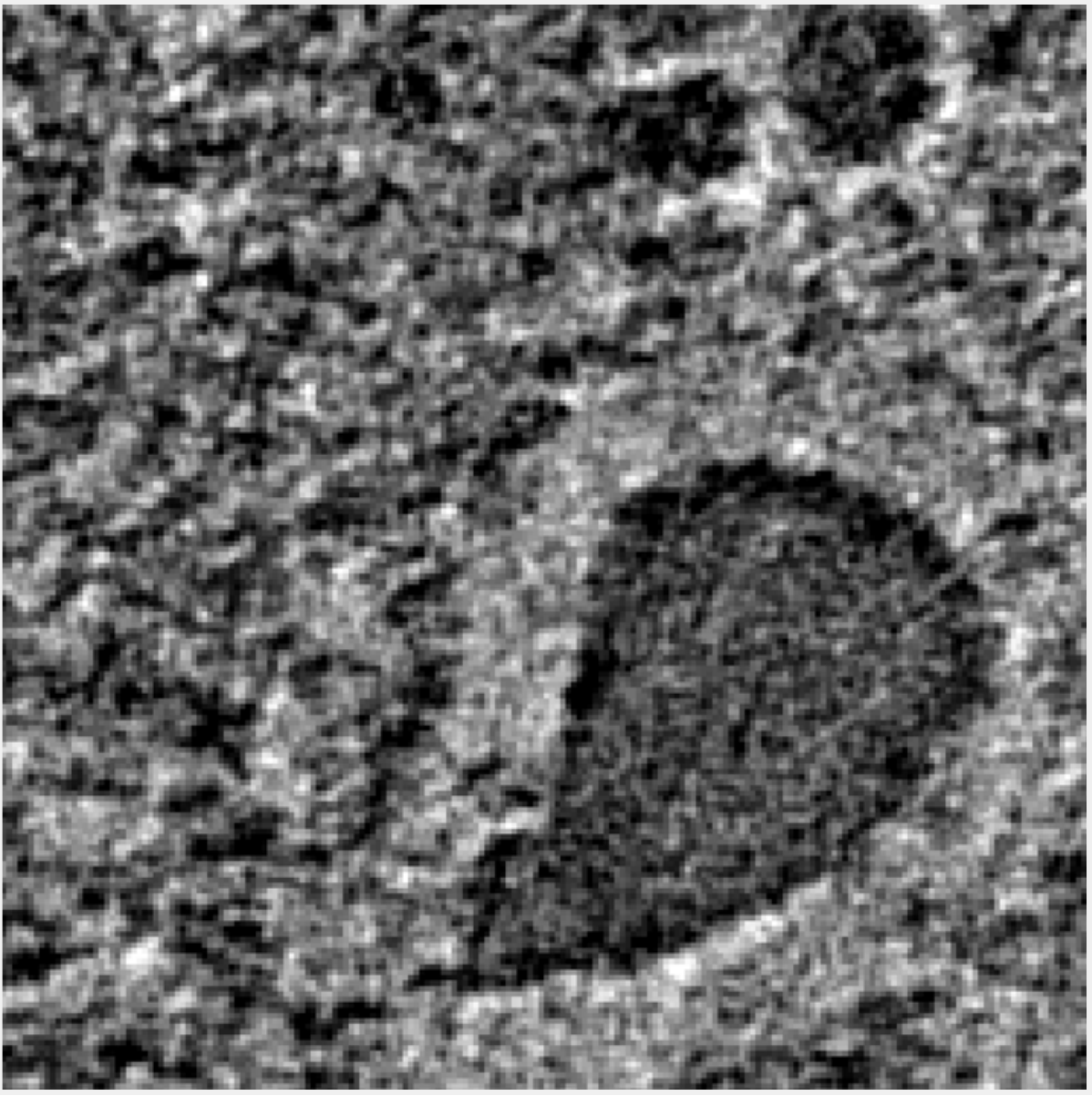}
	\end{center}
	\end{multicols}
		\caption{FBP reconstruction with artifact reduction (cf.
		Theorem \ref{thm:reduction}).}
		\label{subfig:sync data fbp rec ours}\end{subfigure}
	      \caption{Reconstructions from synchrotron data without smoothing
	      (top) and with smoothing (bottom) \cite[\copyright IOP
		Publishing. Reproduced by permission of IOP Publishing.  All rights
		reserved]{Borg2017}. }
	\label{fig:improved synchrotron reconstruction}
	\end{figure}

\section{Discussion}\label{sect:disc}

We first make observations about our results for $\LA$ and then
discuss generalizations.

\subsection{Observations} The proofs of Theorems \ref{thm:curve} and
\ref{thm:streak} show that if $\thpo\in \bd(A)$ and \hfil\newline
$\WF(\onea Rf) = T^*(\Sor)\smo$, then $\LA f$ will have a streak all
along $L\thpo$.  The analogous theorem for Sobolev singularities,
Theorem \ref{thm:Sobolev sing}\ref{nonsmooth bndy}, assumes that $A$
has a corner at $\thpo$.  If $A$ has a weaker singularity at $\thpo$,
then an analogous theorem would hold but one would need to factor in
the Sobolev strength of the wavefront of $\onea$ above $\thpo$.

The artifact reduction method, which is motivated by Theorem
\ref{thm:reduction}, works well for the synchrotron data as was shown
in Figure \ref{fig:improved synchrotron reconstruction} in Section
\ref{sect:synchrotron}. The article \cite{Borg2017} provides more
elaborate artifact reduction methods that are even more successful for
this particular problem.  We point out that this simple technique
might not work as efficiently in other incomplete data tomography
problems as in the problems we present.  Nevertheless, our theorems
and experiments show that abrupt cutoffs that add new singularities in
the sinogram should be avoided.

There are other methods to deal with incomplete data. For example,
data completion using the range conditions for the Radon transform has
been developed, e.g., in \cite{Louis:1980LA, BL,
VogelgesangSchorr2017}.  In \cite{PUW-streaks} and
\cite{CPWWS-susceptibility, PalamodovSusceptibility-2016}, the authors
develop artifact reduction methods for quantitative susceptibility
mapping.  For metal artifacts, there is vast literature (see, e.g.,
\cite{CT-Artifacts-BF:2012}) for artifact reduction methods, and we
believe that those methods might also be useful for certain other
incomplete data tomography problems. In \cite{ParkChoiSeo:2017,
Rigaud:2017, PUW-metal}, the authors have effectively used microlocal
analysis to understand these related problems.

Our theory is developed based on the continuous case -- we view the
data as functions on $\Sor$, not just defined at discrete points.  As
shown in this article, our theory predicts and explains the artifacts
and visible and invisible singularities.  In practice, real data are
discrete, and discretization may also introduce artifacts, such as
undersampling streaks.  Discretization in our synchrotron experiment
could be a factor in the streaks in Figure \ref{fig:sinogram-streaks}
in Section \ref{sect:synchrotron}.  Furthermore, numerical experiments
have finite resolution, and this can cause (and sometimes
de-emphasize) artifacts.  For all these reasons, further analysis is
needed to shed light on the interplay between the discrete and the
continuous theory for CT reconstructions from incomplete data.

\subsection{Generalizations}

Theorems \ref{thm:curve} and \ref{thm:streak} were proven for $\LA =
R^* \paren{\Lambda\paren{\onea R}}$, but the results hold for any
filtering operator that is elliptic in the sense of Remark
\ref{rem:Lambda}. This is true because that ellipticity condition is
all we used about $\Lambda$ in the proofs.  For example, the operator,
$L=-\frac{\partial^2}{\partial p^2}$, in Lambda CT \cite{FRS}
satisfies this condition, and the only difference comes in our Sobolev
Continuity Theorem \ref{thm:Sobolev sing}.  Since $L$ is order two,
the operator $R^* LR$ is of order $1$ and the smoothness in Sobolev
scale of the reconstructions would be one degree lower than for $\LA$.

Our theorems hold for fan-beam data when the source curve $\gamma$ is
smooth and convex and the object is compactly supported inside $\gamma$.
This is true because, in this case, the fan-beam parameterization of lines
is diffeomorphic to the parallel-beam parametrization we use and the
microlocal theorems we use are invariant under diffeomorphisms.  However,
one needs to check that the parallel-beam data set equivalent to the given
fan-beam data set satisfies Assumption \ref{hyp:A}.

Theorems \ref{thm:curve} and \ref{thm:streak} hold verbatim for
generalized Radon transforms with smooth measures on lines in $\rtwo$
because they all have the same canonical relation, given by
\eqref{def:C}, and the proofs would be done as for $\LA$ but using the
basic microlocal analysis in \cite{Quinto1980}.  

Analogous theorems hold for other Radon transforms including the
generalized hyperplane transform, the spherical transform of
photoacoustic CT, and other transforms satisfying the Bolker
assumption \eqref{Bolker}.  The proofs would use our arguments here
plus the proofs in \cite{FrikelQuinto-hyperplane,FrikelQuinto2015}.
These generalizations are the subject of ongoing work.  In incomplete
data problems for $R$, the artifacts are either on $x_{b}$-curves or
they are streaks on the lines corresponding to points on $\bd(A)$.
However, in higher-dimensional cases, the results will be more subtle
because artifacts can spread on \emph{proper subsets} of the surface
over which data are taken, not necessarily the entire set (see
\cite[Remark 4.7]{FrikelQuinto2015}).

Analogous theorems should hold for cone-beam CT, but this type of CT is
more subtle because the reconstruction operator itself can add artifacts,
even with complete data \cite{GU1989, FLU}.



\appendix\section{Proofs}

We now provide some basic microlocal analysis and then use this to prove
our theorems.  We adapt the standard terminology of microlocal analysis and
consider wavefront sets as subsets of cotangent spaces
\cite{Warner}. Elementary presentations of microlocal analysis for
tomography are in \cite{KrQu-chapter,Kuchment:RTbook}.  Standard references
include \cite{Friedlander98, Treves:1980vf}.

\subsection{Building blocks}\label{appendix:lemmas}

Our first lemma gives some basic facts about wavefront sets.

\begin{lemma}\label{lem:WF equal} Let $\xo\in \rtwo$.  Let $u$ and $v$
be locally integrable functions or distributions.
\begin{enumerate}[label=\Alph*.]

 \item\label{WF equal} Let $U$ be an open neighborhood of $\xo$.  Assume
   that  $u$ and $v$ are equal on $U$, then
   $\WF_{\xo}(u) = \WF_{\xo}(v)$.

\item\label{mult by smooth} If $u$ and $\psi$ are both in $\Lloc$ and
$\psi$ is smooth near $\xo$, then $\WF_{\xo}(\psi u)\subset
\WF_{\xo}(u)$.  If, in addition, $\psi$ is nonzero at $\xo$ then
$\WF_{\xo}(u) = \WF_{\xo}(\psi u)$.

\item \label{smooth at x} $\WF_{\xo}(u)= \emptyset$ if and only if
there is an open neighborhood $U$ of $\xo$ on which $u$ is a smooth
function.
\end{enumerate}

The analogous statements hold for functions on $\Sor$.
\end{lemma}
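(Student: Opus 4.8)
The plan is to reduce all three parts to a single \emph{multiplier estimate}, which is the microlocal engine behind the lemma. Recall that $(\xo,\xio)\notin\WF(u)$ exactly when there is a cutoff $\psi$ at $\xo$ and an open cone $V\ni\xio$ with $\Fc(\psi u)$ rapidly decaying on $V$. The engine I would establish first is: if $h\in\Lloc$ has compact support and $\Fc(h)$ decays rapidly on an open cone $V$, and if $\chi\in C^\infty$ has compact support, then $\Fc(\chi h)$ decays rapidly on every open cone $V'$ with $\overline{V'}\setminus\{0\}\subset V$. This follows from $\Fc(\chi h)=c\,\Fc(\chi)\conv\Fc(h)$ (a constant $c$) together with $\Fc(\chi)\in\Sc$: in the convolution $\int \Fc(\chi)(\zeta)\,\Fc(h)(\eta-\zeta)\d\zeta$ one splits the integration region according to whether $\eta-\zeta\in V$ (where the rapid decay of $\Fc(h)$ is used) or $\zeta$ is large (where the Schwartz decay of $\Fc(\chi)$ beats the polynomial growth of the Paley--Wiener function $\Fc(h)$). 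I expect this convolution-on-a-cone estimate, together with the bookkeeping of shrinking cones, to be the only genuinely technical step; everything else is formal manipulation of cutoffs.

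Granting the engine, Part~\ref{mult by smooth} is immediate. If $u$ is microlocally smooth at $(\xo,\xio)$ via a cutoff $\psi$, choose a cutoff $\chi$ at $\xo$ supported where $\psi\neq 0$; then $\chi\psi u=(\chi)(\psi u)$ and the engine gives rapid decay of $\Fc(\chi\psi u)$ on a slightly smaller cone, so $\psi u$ is smooth at $(\xo,\xio)$, yielding $\WF_{\xo}(\psi u)\subset\WF_{\xo}(u)$. If in addition $\psi(\xo)\neq 0$, then $1/\psi$ is smooth on a neighborhood of $\xo$, and applying the same inclusion with multiplier $1/\psi$ (localized by a cutoff supported where $\psi\neq 0$) gives the reverse inclusion, hence equality. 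Part~\ref{WF equal} then follows by the same device: given $u=v$ on $U$ and a cutoff $\chi$ at $\xo$ with $\supp\chi\subset U$, one has $\chi u=\chi v$ identically on $\rtwo$, so $\Fc(\chi u)=\Fc(\chi v)$; using the engine to pass from an arbitrary witnessing cutoff to one supported in $U$ shows that $u$ and $v$ have the same microlocal smoothness at every $(\xo,\xio)$, i.e.\ $\WF_{\xo}(u)=\WF_{\xo}(v)$.

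For Part~\ref{smooth at x}, the direction $(\Leftarrow)$ is direct: if $u$ is smooth on a neighborhood of $\xo$, a cutoff $\psi$ supported there makes $\psi u\in C^\infty$ of compact support, so $\Fc(\psi u)\in\Sc$ decays rapidly in \emph{every} direction and $\WF_{\xo}(u)=\emptyset$. For $(\Rightarrow)$, assume $\WF_{\xo}(u)=\emptyset$; then every unit covector $\xio$ has a witnessing cutoff and open cone, and by compactness of the unit circle finitely many such cones $V_1,\dots,V_N$ (with cutoffs $\psi_1,\dots,\psi_N$) cover all directions. Choosing a single cutoff $\chi$ at $\xo$ supported in $\bigcap_j\sparen{\psi_j\neq 0}$, the engine transfers the rapid decay of each $\Fc(\psi_j u)$ to $\Fc(\chi u)$ on slightly smaller cones $V_j'$; arranging the original cover so that the $V_j'$ still cover all directions, $\Fc(\chi u)$ decays rapidly in every direction, so $\chi u\in\Sc$ is smooth and, since $\chi(\xo)\neq 0$, $u$ is smooth near $\xo$. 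Finally, the statements for functions on $\Sor$ follow verbatim: $\WF(g)$ is \emph{defined} through $\tg(\vp,p)=g(\thbar(\vp),p)$, and since $\vp\mapsto\thbar(\vp)$ is a local diffeomorphism and all three properties (equality on a neighborhood, multiplication by a smooth factor, and smoothness on a neighborhood) are local and invariant under this identification, Parts~\ref{WF equal}--\ref{smooth at x} pull back directly to $\Sor$.
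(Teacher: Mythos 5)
Your overall route is the one the paper itself takes: the paper does not write out a proof but cites the arguments of Section 8.1 of H\"ormander's book, and your ``engine'' is precisely Lemma 8.1.1 there, with Parts \ref{WF equal} and \ref{smooth at x} and the transfer to $\Sor$ deduced in the standard way. However, there is a genuine flaw in your proof of Part \ref{mult by smooth}. You write ``If $u$ is microlocally smooth at $(\xo,\xio)$ via a cutoff $\psi$\,\dots\ then $\chi\psi u=(\chi)(\psi u)$ and the engine gives rapid decay of $\Fc(\chi\psi u)$,'' which treats the multiplier $\psi$ of the statement as if it were the cutoff witnessing the microlocal smoothness of $u$. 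In Part \ref{mult by smooth}, $\psi$ is an arbitrary element of $\Lloc$ that is merely smooth near $\xo$: it need not be compactly supported, need not be a cutoff, and has no a priori relation to $u$. Consequently $h=\psi u$ does not satisfy the hypotheses of your engine (compact support, and known rapid decay of $\Fc(h)$ on a cone), so that step fails. What your argument actually proves is the special case in which $\psi$ happens to be a compactly supported cutoff witnessing the smoothness of $u$ at $(\xo,\xio)$---and that special case is not what the paper needs: Lemma \ref{lem:WF equal}\,\ref{mult by smooth} is applied (for instance in the proof of Theorem \ref{thm:curve}) with $u=\onea$ and $\psi=Rf$, where $Rf$ is smooth near $\thpo$ but is certainly not a witnessing cutoff for $\onea$.

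The repair stays entirely inside your framework. Given $(\xo,\xio)\notin\WF(u)$, let $\phi$ be a witnessing cutoff for $u$, so $h=\phi u$ has compact support and $\Fc(\phi u)$ decays rapidly on an open cone $V\ni\xio$, and let $\chi$ be a cutoff at $\xo$ supported in the neighborhood on which $\psi$ is smooth. Then $\chi\psi$ is $C^\infty$ with compact support, so the engine applies to $h=\phi u$ with multiplier $\chi\psi$ and yields rapid decay of $\Fc\paren{(\chi\psi)(\phi u)}$ on any slightly smaller cone $V'\ni\xio$; since $(\chi\psi)(\phi u)=(\chi\phi)(\psi u)$ and $\chi\phi$ is a cutoff at $\xo$, this exhibits $(\xo,\xio)\notin\WF(\psi u)$, i.e.\ $\WF_{\xo}(\psi u)\subset\WF_{\xo}(u)$. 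With this correction, your reverse inclusion via $1/\psi$ (localized by a cutoff that is identically one near $\xo$, and combined with Part \ref{WF equal}) goes through. Parts \ref{WF equal} and \ref{smooth at x} are correct as sketched: in Part \ref{smooth at x} your support condition $\supp\chi\subset\bigcap_j\sparen{\psi_j\neq 0}$ makes the factorization $\chi u=(\chi/\psi_j)(\psi_j u)$ legitimate, and the same device (or the product $\chi\phi$ as above) handles Part \ref{WF equal}; just state the factorization explicitly, since the engine by itself only controls $\Fc(\chi\cdot h)$, not $\Fc(\chi u)$.
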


These basic properties are proven using the arguments in Section 8.1 of
\cite{Hoermander03}, in particular, Lemma 8.1.1, Definition 8.1.2, and
Proposition 8.1.3. This lemma is valid for functions on $\Sor$ using 
the identifications of $\Sor$ with $\rtwo$ given by \eqref{def:thbar} and 
for functions \eqref{def:tg}, and the fact that
singularities are defined locally.

Our next definition will be useful to describe how wavefront sets transform
under $R$ and $R^*$.

\begin{definition}\label{def:composition}  Let $C\subset
  T^*(\Sor)\times T^*(\rtwo)$ and let $B\subset T^*(\rtwo)$.  The
  composition is defined \[C\circ B = \sparen{(\th,p,\eta) \in T^*(\Sor)\st
  (\th,p,\eta, x,\xi)\in C \text{ for some } (x,\xi)\in B}.\] We define
  $C^t =
\sparen{(x,\xi,\th,p,\eta)\st (\th,p,\eta,x,\xi)\in C}$.  
\end{definition}

The function $g$ on $\Sor$ will be called \emph{symmetric} if \bel{g
  symmetry}\forall \thp\in \Sor, \ g(\th,p)=g(-\th,-p).\ee If $f\in \LD$,
then $Rf$ and $\Lambda \onea Rf$ are both locally integrable functions are
symmetric in this sense.  For such functions, \bel{WF(g)
  symmetry}\paren{\tho,\po,\omo(-\alpha\dth + \dpp)}\in \WF(g)
\Leftrightarrow \paren{-\tho,-\po,-\omo(\alpha\dth + \dpp)}\in \WF(g).\ee
For these reasons, we will identify cotangent vectors
\bel{identify}\paren{\tho,\po,\omo(-\alpha\dth + \dpp)}
\Leftrightarrow \paren{-\tho,-\po,-\omo(\alpha\dth + \dpp)}.\ee

Our next proposition is the main technical theorem of the
article.  It provides the wavefront correspondences for $R$ and $R^*$
which we will use in our proofs.  

\begin{proposition}[Microlocal correspondence of singularities]\label{prop:R*corresp}  
The X-ray transform, $R$, is an elliptic Fourier integral operator (FIO)
with canonical relation \bel{def:C}
\begin{aligned}C=&\Big\{\paren{\th,x\cdot\th,
    \om(-x\cdot\thperp\dth
    + \dpp), x,\om\th\dx}\\
  &\qquad\st \th\in \So,\, x\in \rtwo,\,\om\neq
0\Big\}.\end {aligned}\ee 

Let $f\in\LD$ and let $g$ be a locally integrable function on $\Sor$
that is symmetric by \eqref{g symmetry}.  Let $\xo\in \rtwo$, $\tho\in
\So$, and let $p$, $\alpha$, and $\omega$ be real numbers with
$\om\neq 0$.

The X-ray transform $R$ is an elliptic FIO with canonical relation
$C$.  Therefore,
\bel{WFcorresp}\begin{gathered}\WF(Rf)=C\circ
\WF(f)\qquad \text{and}\\
C\circ\sparen{(\xo,\omega\th\dx)} =
\sparen{\paren{\tho,\xo\cdot \tho,\omega(-\xo\cdot
\thoperp\dth+\dpp)}}\end{gathered}\ee under the identification
\eqref{identify}.

 The dual transform $R^*$ is an elliptic FIO with canonical relation $C^t$.
Then,
\bel{R*corresp}\begin{gathered}\WF(R^* g) = C^t\circ \WF(g)\quad
\text{and}\\
C^t\circ
\sparen{(\th,p,\om(-\alpha\dth +
\dpp))}=\sparen{(\xo(\th,p,\alpha),\om\th\dx)}\\
\text{where}\quad \xo(\th,p,\alpha) =
\alpha\thperp+p\th.\end{gathered}\ee 
\end{proposition}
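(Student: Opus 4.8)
The plan is to establish that $R$ is an elliptic Fourier integral operator with canonical relation $C$ given by \eqref{def:C}, and then derive the microlocal correspondences for $R$ and $R^*$ as direct consequences of the general FIO calculus. The foundational fact---that $R$ is an elliptic FIO whose canonical relation is exactly the set of pairs relating a covector normal to the line $L\thp$ at a point $x\in L\thp$ to the corresponding covector on $\Sor$---is classical and traces back to Guillemin and Sternberg; I would cite \cite{Quinto1980,GU1989} (and the presentations in \cite{KrQu-chapter,Hoermander03}) rather than reprove it. The ellipticity follows because the symbol of $R$ is nowhere vanishing on $C$. From this, H\"ormander's theorem on the propagation of wavefront sets under FIOs \cite[Theorem 8.5.5 or the FIO calculus in Chapter 25]{Hoermander03} gives $\WF(Rf)\subset C\circ\WF(f)$, and ellipticity upgrades this to the equality $\WF(Rf)=C\circ\WF(f)$ asserted in \eqref{WFcorresp}.

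The main computational content is then simply to make the abstract composition $C\circ\{(\xo,\om\th\dx)\}$ explicit. First I would take a covector $(\xo,\om\tho\dx)\in T^*(\rtwo)\smo$ and read off from \eqref{def:C} which boundary covector on $\Sor$ it pairs with: the point $x=\xo$ forces $p=\xo\cdot\tho$ and $\th=\tho$, and the fiber variable is $\om(-\xo\cdot\thoperp\dth+\dpp)$. This is a direct substitution into the defining set of $C$, yielding the right-hand formula in \eqref{WFcorresp}. For $R^*$, since $R^*$ is the dual (adjoint) operator, its canonical relation is the transpose $C^t$ (Definition \ref{def:composition}), and $R^*$ is again elliptic; hence $\WF(R^*g)=C^t\circ\WF(g)$ by the same FIO machinery. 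To compute $C^t\circ\{(\th,p,\om(-\alpha\dth+\dpp))\}$ explicitly, I would solve the constraints imposed by \eqref{def:C} read backwards: matching the $\Sor$-covector to a point forces $\alpha=x\cdot\thperp$ and $p=x\cdot\th$, which inverts to $x=\alpha\thperp+p\th$ using the orthonormal frame $\{\th,\thperp\}$; the resulting base point is $\xo(\th,p,\alpha)=\alpha\thperp+p\th$ with covector $\om\th\dx$, exactly as in \eqref{R*corresp}.

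The one genuinely delicate point, rather than a hard one, is the symmetry bookkeeping. Because lines are doubly parametrized via $L\thp=L(-\th,-p)$, the canonical relation $C$ is invariant under the involution \eqref{identify}, and wavefront sets of symmetric functions satisfy \eqref{WF(g) symmetry}. I would therefore be careful to phrase the composition formulas under the identification \eqref{identify}, so that each singularity of $Rf$ (or $R^*g$) is counted once and the stated equalities hold as sets of equivalence classes of covectors. This amounts to verifying that both $C$ and the image formulas are compatible with the antipodal identification---a short check, but the step where sign errors would otherwise creep in. With ellipticity, the general FIO theorem, and these explicit compositions in hand, all three displayed assertions follow, so I do not expect a substantial obstacle beyond invoking the classical FIO framework correctly and handling the symmetry convention consistently.
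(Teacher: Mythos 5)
Your outline matches the paper's proof in its broad strokes: both cite the classical literature for the fact that $R$ is an elliptic FIO with canonical relation \eqref{def:C}, both reduce the displayed formulas to explicit computation of the compositions, and both are careful about the antipodal identification \eqref{identify}; your composition computations and symmetry bookkeeping are correct. However, there is a genuine gap at the step ``ellipticity upgrades this to the equality $\WF(Rf)=C\circ\WF(f)$.'' For a general elliptic FIO this is false: the H\"ormander calculus gives only the inclusion $\WF(Rf)\subset C\circ\WF(f)$, and when the projection from the canonical relation to $T^*(\Sor)$ fails to be injective, two distinct singularities of $f$ can be mapped to the same covector on the data side and cancel there, making the inclusion strict. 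Ellipticity of the symbol does not exclude such cancellation. What excludes it is precisely the geometric condition the paper singles out as ``the crucial point'': the \emph{global Bolker assumption} \eqref{Bolker}, namely that $\Pi_L:C\to T^*(\Sor)$ is an injective immersion. For $C$ as in \eqref{def:C} this is a direct computation --- the covector $\om(-\alpha\dth+\dpp)$ at $(\th,p)$ determines $x=p\th+\alpha\thperp$ and $\om$ uniquely, modulo \eqref{identify} --- but your proposal never invokes it, and without it the equalities in \eqref{WFcorresp} and \eqref{R*corresp} are not justified.

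The repair is either to cite \cite[Theorem 3.1]{Quinto93} (the paper's route: that theorem proves the two-sided microlocal correspondence for $R$ directly, in Sobolev form), or to run a parametrix-type argument that uses Bolker explicitly: since $R^*\Lambda R$ is, up to a constant, the identity (equivalently, by Guillemin's composition theorem \cite{Gu1985}, an elliptic pseudodifferential operator --- this is where the injective-immersion hypothesis enters), one gets $\WF(f)=\WF(R^*\Lambda Rf)\subset C^t\circ\WF(Rf)$, and then injectivity of the projections under \eqref{identify} converts this into the missing reverse inclusion $C\circ\WF(f)\subset\WF(Rf)$. The same correction is needed for your claim that $\WF(R^*g)=C^t\circ\WF(g)$ follows ``by the same FIO machinery'': the paper obtains it by repeating the argument for $R$ with $C^t$ in place of $C$, again resting on the Bolker property rather than on ellipticity alone. (A minor point: the reference you give for the wavefront inclusion sits in the wrong volume --- the FIO calculus of Chapter 25 is not in the cited volume I of H\"ormander --- but this is cosmetic compared with the missing Bolker step.)
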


Here are pointers to the elements of this proof. The facts about $R$ are
directly from \cite[Theorem 3.1]{Quinto93} or \cite[Theorem A.2]{Q:sc}, and
they use the calculus of the FIO $R$ \cite{Gu1985, GS1977} (see also
\cite{Quinto1980}).  Note that the crucial point is that $R$ is an elliptic
Fourier integral operator that satisfies the \emph{global Bolker
  assumption:} the natural projection \bel{Bolker}\Pi_L:C\to T^*(Y)\ \
\text{is an injective immersion,}\ee so \eqref{WFcorresp} holds for $R$.
A straightforward calculation using \eqref{def:C} shows that the
  global Bolker assumption holds. Note that we are using the
identification \eqref{identify} in asserting that \eqref{WFcorresp} is an
equality.  The proofs for $R^*$ are parallel to those for $R$ except they
involve the canonical relation for $R^*$, $C^t$, rather than $C$.

\begin{remark}\label{rem:odds and ends}
  In \cite{FrikelQuinto2015,FrikelQuinto-hyperplane} the authors prove
artifact characterizations for limited data problems for photoacoustic
CT and generalized hyperplane transforms.  One key is a fundamental
result on multiplying distributions, \cite[Theorem
8.2.10]{Hoermander03}.  If $u$ and $v$ are distributions on $\Sor$,
this theorem implies they can be multiplied \emph{as distributions} if
they satisfy the non-cancellation condition $ \forall\,(\th,p,\eta)\in
\WF(u),\; (\th,p,-\eta)\notin \WF(v)$.  Then $uv$ is a distribution
and an upper bound for $\WF(uv)$ is given in terms of $\WF(u)$ and
$\WF(v)$.

  However, this non-cancellation condition does not hold for $\onea$ and
  $Rf$ when $\onea$ either is smooth with small slope or is not smooth at
  $\thpo$.  That is why we consider functions $f\in \LD$ in this article
  since $\onea Rf$ will be a function in $L^2(\Sor)$ even if \cite[Theorem
  8.2.10]{Hoermander03} does not apply.
\end{remark}

Our next remark will be used in ellipticity proofs that follow.

\begin{remark}\label{rem:Lambda} The operator $\Lambda$ is elliptic in
all cotangent directions except $\dth$ because the symbol of $\Lambda$
is $\abs{\tau}$ where $\tau$ is the Fourier variable dual to $p$.
However, the $\dth$ direction will not affect our proofs.  This is
true because, for any function $f\in \LD$, the covector
$(\th,p,\om\dth)$ is not in $\WF(Rf)$ because $\WF(Rf)= C\circ \WF(f)$
(use the definition of composition and \eqref{def:C}).  So, for each
$f\in \LD$, $\WF(\Lambda Rf)=\WF(Rf)$.  Because
$C^t\circ\sparen{(\th,p,\alpha\dth)}=\emptyset$ by \eqref{def:C}, even if
$(\th,\,\om\dth)\in \WF(\onea Rf)$, that covector will not affect the
calculation of $C^t\circ \WF(\Lambda \onea Rf)$.  Therefore, $\Lambda$
is elliptic on all cotangent directions that are preserved when
composed with $C^t$, and these are all the directions we need in our
proofs.  

Our theorems will be valid for any pseudodifferential operator on $\Sor$
that is invariant under the symmetry condition \eqref{g symmetry} and
satisfies this ellipticity condition (although the Sobolev results
will depend on the order of the operator). 
\end{remark}

\subsection{Proof of Theorems \ref{thm:curve}, \ref{thm:streak}, and
\ref{thm:complete}}\label{appendix:added}

In the proofs of these theorems, we use Proposition
\ref{prop:R*corresp} to analyze how multiplication by $\onea$ adds
singularities to the data $Rf$ and then to the reconstruction, $\LA
f$. We first make observations that will be useful in the proofs. 

 Let $A$ satisfy Assumption \ref{hyp:A} and let $f\in \LD$. Let 
 \[G=\onea R f  \quad\text{then}\quad R^*\Lambda G = \LA f.\]
By Remark \ref{rem:Lambda} and the statements in Proposition
\ref{prop:R*corresp}, \bel{LAG}\WF(\LA f)=C^t\circ \WF(G).\ee
Using the expression \eqref{def:C} for $C$, one can show for $\thpo\in
\Sor$ that \bel{fullC}
\begin{gathered}
C\circ\paren{N^*(L\thpo)\smo}= T^*_\thpo(\Sor) \setminus P \\
\text{where}\qquad N^*(L\thpo) =\sparen{(x,\omega\tho\dx)\st
x\in L\thpo,\ \omega\in \rr}\\
\text{and}\qquad P=\sparen{(\th,p, \omega\dth)\st \thp\in \Sor,\
\omega\in \rr}.
\end{gathered}\ee
Because $\WF(Rf)=C\circ \WF(f)$, \eqref{fullC} implies that
if $f$ is smooth conormal to $L\thpo$, then $Rf$ is smooth near $\thpo$.

 Using analogous arguments for $C^t$, one shows for $\thp\in \Sor$
that \bel{fullCt}C^t\circ\paren{T^*_\thpo(\Sor)\smo} = N^*(L\thpo)\smo
.\ee By \eqref{LAG}, if $G$ is smooth near $\thpo$ then $\LA f$
is smooth conormal to $L\thpo$.
\bigskip

To start the proofs, let $f\in \LD$ and let $A$ be a data set
satisfying Assumption \ref{hyp:A}.  Theorem \ref{thm:visible}
establishes that if $\thpo\notin \bd(A)$, then there are no artifacts
in $\LA f$ conormal to $L\thpo$ (since $\WFLo(\LA f)\subset
\WFLo(f)$).  Therefore, the only singular artifacts are on lines
$L\thpo$ for $\thpo\in \bd(A)$.  

\begin{proof}[Proof of Theorem \ref{thm:curve}]
Assume
$\bd(A)$ is smooth with finite slope at $\thpo$.  Therefore, there is
an open neighborhood $I$ of $\tho$ and a smooth function $p=p(\th)$
for $\th\in I$ such that $(\th,p(\th))\in \bd(A)$.  A straightforward
calculation shows for each $\th\in I$ and each $\omega\neq 0$ that 
\[\eta(\th) = \paren{\th,p(\th),\omega\paren{-p'(\th)\dth+\dpp}}\]
is conormal to $\bd(A)$ at $(\th,p(\th))$.  A calculation using
\eqref{R*corresp} and \eqref{LAG} shows that \bel{eta-LA}\eta(\th)\in
\WF(G) \quad\text{if and only if}\quad
(\xb(\th),\omega\th\dx)\in WF(\LA f),\ee
where $\xb(\th)$ is given by \eqref{def:xb}.
Then, $(\xb(\tho),\omega\tho\dx)$ is the possible
object-independent artifact that could occur on $L\thpo$. Note that
$\xb(\th)$ is simply the $x$-projection of $C^t\circ N^*(\bd(A))$.

By taking the derivative $\xb'(\th)$, one can show that the only case
in which the $\xb$-curve is a subset of a line occurs when $\bd(A)$ is
locally defined by lines through a point (e.g., for some $\xo\in
\rtwo$, $\bd(A)$ is locally given by $p(\th)=\xo\cdot\th$).  However,
in this case \eqref{def:xb} shows that the $\xb$-curve is the single
point $\xo$. This proves part \ref{not a line}

If $f$ has no singularities conormal to $L\thpo$, then $Rf$ is smooth
near $\thpo$, so
\newline
 $\WFpto(G)\subset \WFpto(\onea)$ by Lemma \ref{lem:WF
equal}\,\ref{mult by smooth}\ This proves part \ref{no sing}\ref{no sing:WF point}.

If $Rf$ is zero in a neighborhood of $\thpo$, then $G$ is smooth near
$\thpo$ so, by the note below \eqref{fullCt}, $\LA f$ is smooth conormal to
$L\thpo$.  This proves part \ref{no sing}\ref{Rf=0}.

If $Rf\thpo\neq 0$, then $\WF_\thpo(G)=\sparen{\eta(\tho)}$ by Lemma
\ref{lem:WF equal}\,\ref{mult by smooth}\  Now, by
\eqref{eta-LA},\hfil\newline
$(\xb(\tho),\omega\tho\dx)\in \WF(\LA f)$.  This proves part \ref{no sing}\ref{Rf neq 0}
and finishes the proof of part  \ref{no sing}
\end{proof}

 \begin{proof}[Proof of Theorem \ref{thm:streak}]

To prove part \ref{thmPart:object dep} we make a simple observation.
Singularities of $f$ conormal to $L\thpo$ can cause singularities in
$G$ only above $\thpo$ and those can cause singularities of $\LA f$
only conormal to $L\thpo$.  

Part \ref{thmPart:smooth object depII} follows from the fact that the
conormal to $\bd(A)$ at $\tho$ is $\om \dth$ for $\om\neq 0$, that
$C^t\circ\sparen{(\th,p,\om\dth}=\emptyset$, and the arguments in the
proof of Theorem \ref{thm:curve}\ref{no sing}\ref{no sing:WF point}.

Now, we assume $\bd(A)$ is not smooth at $\thpo$. 

The first observation is straightforward: if $\bd(A)$ is not smooth at
$\thpo$, then that singularity can cause singularities in $G$ at $\thpo$
which cause singularities of $\LA f$ conormal to $L\thpo$ (and nowhere
else).

Assume $f$ is smooth conormal to $L\thpo$, $Rf\thpo\neq 0$, and $A$
has a corner at $\thpo$ (see Definition \ref{def:smooth}). Then, by
Lemma \ref{lem:WF equal}, $\WF_\thpo (G) = \WF_\thpo(\onea)$ which is
equal to $T^*_\thpo(\Sor)\smo$.  Therefore, by \eqref{fullCt},
$\WF_{L\thpo}(\LA f)=N^*(L\thpo)\smo$.  This finishes the proof of
Theorem \ref{thm:streak}
\end{proof}

\begin{proof}[Proof of Theorem \ref{thm:complete}] Let $f\in \LD$ and
assume $A$ satisfies Assumption \ref{hyp:A}.  Theorem
\ref{thm:visible} establishes that artifacts are added in $\LA f$
conormal to $L\thpo$ only when $\thpo\in\bd(A)$.  Let $\thpo\in \bd(A)$.
Singularities of $G=\onea Rf$ at $\thpo$ come only from singularities of
$\onea$ or singularities of $Rf$ at $\thpo$.  Therefore, singularities of
$\LA f$ conormal to $L\thpo$ come only from singularities of $\onea$ at
$\thpo$ or singularities of $Rf$ at $\thpo$.

The artifacts of $\LA f$ caused by $\onea$ are analyzed in the proof
of Theorem \ref{thm:curve} and Theorem \ref{thm:streak} parts
\ref{thmPart:smooth object depII} and \ref{thmPart:nonsmooth} The
artifacts of $\LA f$ caused by $Rf$ are covered in Theorem
\ref{thm:streak}\,\ref{thmPart:object dep} This takes care of all
 singular artifacts for the continuous problem.\end{proof}

 \subsection{Proof of Theorem \ref{thm:Sobolev
sing}}\label{appendix:Sobolev Sing} 

We first prove a proposition giving the correspondence between Sobolev
wavefront set and $R^*$.

\begin{proposition}[Sobolev wavefront correspondence for $R$ and 
$R^*$]\label{prop:Sobolev R*} Let $\thpo\in \Sor$, $\omo\neq 0$, and
let $s$ and $\alpha$ be real numbers.  Let
\[\etao =\omo(-\alpha \dth+\dpp), \quad \xo= \po\tho+\alpha
\thoperp, \text { and }\xio=\omo\tho\dx.\] Let $f$ be a
distribution on $\rtwo$ and $g$ a distribution on $\Sor$. Then,
\begin{align}
\label{app:SobolevCorrespR}
 (\xo,\xio)\in \WF_s(f) &\Longleftrightarrow (\tho,\po,\etao)\in
WF_{s+1/2}(Rf),\\
\label{app:SobolevCorrespR*}
(\tho,\po,\etao)\in \WF_s(g)& \Longleftrightarrow (\xo,\xio)\in
WF_{s+1/2}(R^*g). 
\end{align}
\end{proposition}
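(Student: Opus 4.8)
The plan is to obtain both equivalences from the microlocal Sobolev continuity of elliptic Fourier integral operators whose canonical relation satisfies the global Bolker assumption. Proposition~\ref{prop:R*corresp} already supplies the needed structure: $R$ is an elliptic FIO with canonical relation $C$, $R^*$ is an elliptic FIO with canonical relation $C^t$, and both satisfy \eqref{Bolker}, so that the compositions $C\circ\{(\xo,\xio)\}$ and $C^t\circ\{(\tho,\po,\etao)\}$ are each a single covector, given explicitly in \eqref{WFcorresp} and \eqref{R*corresp}. The one remaining structural fact is the order: in the plane the X-ray transform gains exactly half a derivative ($R\colon H_s\to H_{s+1/2}$ locally, which can be checked directly from the Fourier slice theorem and the Jacobian of polar coordinates), so $R$, and by duality $R^*$, is an FIO of order $-1/2$.

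First I would invoke the microlocal Sobolev continuity theorem for FIOs, in the form of \cite[Theorem 4.3.1]{Ho1971} and \cite[Corollary 4.4.5]{Duistermaat}: an FIO $A$ of order $m$ whose canonical relation is an injective immersion maps $H_t$ microlocally to $H_{t-m}$, the shift being read at the covector that corresponds under the canonical relation. Applied to $R$ with $m=-1/2$, this gives the forward implication of \eqref{app:SobolevCorrespR}: if $f$ is microlocally in $H_s$ at $(\xo,\xio)$, then $Rf$ is microlocally in $H_{s+1/2}$ at the corresponding covector $(\tho,\po,\etao)=C\circ\{(\xo,\xio)\}$. For the converse I would use that, since $R$ is elliptic and satisfies the Bolker assumption, it admits a microlocal left parametrix $Q$ that is an FIO of order $+1/2$ with canonical relation $C^t$; concretely $Q$ may be taken to be a constant multiple of $R^*\Lambda$, which by the filtered backprojection inversion formula \cite{Na:book} satisfies $R^*\Lambda R\sim\mathrm{Id}$ (an elliptic pseudodifferential operator of order $0$, using Remark~\ref{rem:Lambda} to discard the $\dth$ directions that do not occur in $\WF(Rf)$). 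Applying the continuity theorem to $Q$ then sends microlocal $H_{s+1/2}$ regularity of $Rf$ at $(\tho,\po,\etao)$ back to microlocal $H_s$ regularity of $f$ at $(\xo,\xio)$, upgrading the containment to the stated equivalence.

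The statement \eqref{app:SobolevCorrespR*} follows by the identical argument applied to $R^*$: it is an elliptic FIO of order $-1/2$ with canonical relation $C^t$, its covector correspondence is the single covector $C^t\circ\{(\tho,\po,\etao)\}=\{(\xo,\xio)\}$ from \eqref{R*corresp}, and $\Lambda R$ provides the order $+1/2$ microlocal parametrix needed for the converse direction. The forward half of \eqref{app:SobolevCorrespR} is also contained in \cite[Theorem~3.1]{Quinto93}, which I would cite for that part.

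The main obstacle is not the abstract continuity statement but ensuring that the Sobolev shift is \emph{exactly} $1/2$ and that the correspondence is an honest equivalence rather than a one-sided bound. Both points rest on the global Bolker assumption \eqref{Bolker}: because the projection $\Pi_L$ is an injective immersion, the composition of $C$ (or $C^t$) with a single covector is again a single covector, the clean-composition calculus applies so that $R^*\Lambda R$ is genuinely an elliptic operator of order $0$, and the microlocal parametrix exists. Without the injectivity of $\Pi_L$ one would obtain only the forward regularity bound and could not control the order in the converse, so verifying that this condition is what pins the shift to $1/2$ is the crux of the argument.
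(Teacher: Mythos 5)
Your strategy—propagation of regularity through an order $-1/2$ FIO, reversed by a parametrix—is viable and genuinely different from the paper's proof (which cites \cite[Theorem 3.1]{Quinto93} for \eqref{app:SobolevCorrespR} and for the detection direction of \eqref{app:SobolevCorrespR*}, and writes out only the propagation direction of \eqref{app:SobolevCorrespR*}). However, as written it has two real gaps. The first is that the ``microlocal Sobolev continuity theorem'' you invoke is not what \cite[Theorem 4.3.1]{Ho1971} and \cite[Corollary 4.4.5]{Duistermaat} assert: those are \emph{global} continuity statements, $H_s^{\mathrm{comp}}\to H_{s+1/2}^{\mathrm{loc}}$, with no localization in phase space. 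The passage from global continuity to the statement that regularity is preserved \emph{at the specific covector corresponding under the canonical relation} is exactly the nontrivial content of the proposition, and it is what the paper's written proof supplies: split $g=g_1+g_2$ with $g_1\in H_s$ and $(\tho,\po,\etao)\notin\WF(g_2)$ by \cite[Theorem 6.1, p.~259]{Petersen}, apply the global continuity theorem \cite[Theorem VIII 6.1]{Treves:1980vf} to $g_1$, apply the $C^\infty$ wavefront correspondence \eqref{R*corresp} to $g_2$, and add. By taking the microlocal form as a black box you have assumed away the heart of the argument; the same decomposition device is what would make your propagation steps (for $R$, $R^*$, $R^*\Lambda$, and $\Lambda R$) rigorous.

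The second gap is the parametrix for the converse of \eqref{app:SobolevCorrespR*}. For $R$ your argument is clean, since FBP gives the exact identity $R^*\Lambda R=c\,\mathrm{Id}$ \cite{Na:book}. But the claim that ``$\Lambda R$ provides the parametrix'' for $R^*$ is the different assertion $\Lambda R R^*\sim c\,\mathrm{Id}$, which does not follow from FBP by any formal manipulation: $R^*$ is not injective (e.g.\ $g(\th,p)=u(\th)$ with $\int u\,\dth=0$ lies in $\ker R^*$), so no exact left inverse exists, and for general $g$ one has $RR^*=c\,|D_p|^{-1}\paren{\mathrm{Id}+S}$ where $Sg(\th,p)=g(-\th,-p)$, so that $\Lambda RR^*$ recovers only the symmetrization $\frac12(g+Sg)$. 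Your route therefore needs two additional ingredients that are asserted nowhere in the proposal: (i) the separate identity $RR^*=c\,|D_p|^{-1}$, valid on symmetric distributions microlocally away from the $\dth$-directions, which requires its own Fourier computation or an appeal to the composition calculus for $C\circ C^t$ together with ellipticity of $\Lambda$ off the $\dth$-directions (Remark \ref{rem:Lambda}); and (ii) the symmetry hypothesis \eqref{g symmetry}, equivalently the identification \eqref{identify}, without which the detection direction of \eqref{app:SobolevCorrespR*} is actually false: taking $g=g_1-Sg_1$ with $g_1$ singular at $(\tho,\po,\etao)$ and supported near $(\tho,\po)$ gives $R^*g=0$ while $(\tho,\po,\etao)\in\WF_s(g)$. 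This is precisely why the paper's Proposition \ref{prop:R*corresp} carries the symmetry assumption; your proposal never invokes it. Both gaps are repairable, and the Sobolev orders in your scheme all match once they are filled, but as it stands the two key lemmas of the proof are unproven.
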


\begin{proof}
Equivalence \eqref{app:SobolevCorrespR} is given \cite[Theorem
3.1]{Quinto93}, however the proof of the $\Leftarrow$
implication for $R$ was left to the reader.

The proof of the $\Rightarrow$ implication of \eqref{app:SobolevCorrespR*}
is completely analogous to the proof given in \cite{Quinto93} for $R$.  For
completeness, we will prove the $\Leftarrow$ implication of
\eqref{app:SobolevCorrespR*}.  Assume $g$ is in $H_{s}$ at
$(\tho,\po,\etao)$. By \cite[Theorem 6.1, p.\ 259]{Petersen}, we can write
$g=g_1+g_2$ where $g_1\in H_s$ and $(\tho,\po,\etao)\notin \WF(g_2)$.  The
operator $R^*$ is continuous in Sobolev spaces from $H_s$ to
$H_{s+1/2}^\text{loc}$ by \cite[Theorem VIII 6.1]{Treves:1980vf} since
$C^t$ is a local canonical graph.  Therefore
$R^*g_1\in H^\text{loc}_{s+1/2}$.  Since $(\tho,\po,\etao)\notin \WF(g_2)$,
$(\xo,\xio)\notin \WF(R^*g_2)$ by the wavefront correspondence
\eqref{R*corresp}.  An exercise using Definition \ref{def:Hs norm} and the
Fourier transform shows that $R^*g = R^*g_1+R^*g_2$ is in $H_{s+1/2}$ at
$(\xo,\xio)$.
\end{proof}

 \begin{proof}[Proof of Theorem \ref{thm:Sobolev sing}] Let $f\in \LD$
and let $A$ satisfy Assumption \ref{hyp:A}.  Let $\thpo\in \bd(A)$ and
assume $Rf\thpo\neq 0$ and $f$ is smooth conormal to $L\thpo$.
Because $f$ is smooth conormal to $L\thpo$, $\WF_\thpo(Rf) =
\emptyset$ so $Rf$ is smooth in a neighborhood of $\thpo$ by Lemma
\ref{lem:WF equal}\,\ref{smooth at x} Since $Rf\thpo\neq 0$, for each
$s$, \bel{Sobolev=}\paren{\WF_{s-1}}_\thpo(\Lambda\onea
Rf)=\paren{\WF_s}_\thpo(\onea Rf) = \paren{\WF_s}_\thpo(\onea)\,;\ee
the left-hand equality is true because $\Lambda$ is an elliptic
pseudodifferential operator of order one (except in the irrelevant
direction $\dth$---see Remark \ref{rem:odds and ends}), and the
right-hand equality is true by Lemma \ref{lem:WF equal}\,\ref{mult by
smooth}

To prove part \ref{smooth bndy not vertical} of the theorem, assume
$\bd(A)$ is smooth and has finite slope at $\thpo$.  Because the
Sobolev wavefront set is contravariant under diffeomorphism
\cite{Treves:1980vf}, we may assume $\bd(A)$ is a horizontal line, at
least locally near $\thpo$.  Let $\etao = \dpp$.  We claim that
$(\tho,\po, \pm\etao)\in \WF_{1/2}(\onea)$ and, for $s<1/2$, $\onea $
is in $H_s$ at $ (\tho, \po,\pm \etao)$.  Furthermore $\onea$ is
smooth in every other direction above $\thpo$.  The proofs of these
two statements are now outlined.  Using a product cutoff function
$\psi=\psi_1(\th)\psi_2(p)$ to calculate $\Fc(\psi \onea)$ and
integrations by parts, one can show that this localized Fourier
transform is of the form $S(\nu)T(\tau)$ where $S$ is a smooth,
rapidly decreasing function and $T$ is $\Oc(1/\abs{\tau})$ (and not
$\Oc(1/\abs{\tau}^p$ for any $p>1$).  Therefore $S(\nu)T(\tau)$ is
rapidly decaying in all directions but the vertical.  This implies
that $\onea$ is in $H_s$ for $s<1/2$ at $(\tho,\po,\pm\etao)$ and
$(\tho,\po,\pm\etao)\in\WF_{1/2}(\onea)$.  This also shows that this
localized Fourier transform is rapidly decaying in all directions
except $\pm\etao$.  Now, using \eqref{Sobolev=} one sees that
$(\tho,\po,\pm\etao)\in \WF_{-1/2}(\Lambda\onea Rf)$; $\Lambda \onea
Rf$ is in $H_s$ for $s<-1/2$ at $(\tho,\po,\pm\etao)$; and $(\tho,\po,
\eta)\notin \WF(\Lambda\onea Rf)$ for any $\eta$ not parallel to
$\etao$.

Now, by Proposition \ref{prop:Sobolev R*}, $\LA f =R^*\Lambda \onea Rf$ is
in $H_s$ at $(\xb(\tho),\pm\tho\dx)$ for $s<0$ and \[(\xb(\tho),
\pm\tho\dx)\in WF_{0}(\LA f), \] where $\xb(\tho)$ is given by
\eqref{def:xb}.  Using this theorem again, one sees that for any $x\in
L\thpo$, if $x\neq \xb(\tho)$,
\[(x,\pm \tho\dx)\notin \WF(\LA f).\]  Therefore,  
the only covectors in $N^*(L\thpo)\cap \WF(\LA f)$ are
$(\xb(\tho),\alpha \tho\dx)$ for $\alpha \neq 0$.

To prove part \ref{nonsmooth bndy}, assume $\bd(A)$ has a corner at
$\thpo$.  Let $\alpha_1$ and $\alpha_2$ be the slopes at $\thpo$ of the two
parts of $\bd(A)$.  Let \bel{two xbs} \eta_j = -\alpha_j \dth + \dpp, \quad
{\xb}_j=\po\tho + \alpha_j\thoperp, \qquad j=1,2. \ee An argument similar
to the diffeomorphism/integration by parts argument in the last part of the
proof is used.  First a diffeomorphism is used to transform the corner so,
\emph{locally} $A$ becomes $\lA=\sparen{(\th,p)\st \th\geq 0, p\geq 0}$.
To do this, one uses Definition \ref{def:smooth} and footnote
  \ref{footnote:corner} and the Inverse and Implicit Function Theorems.
  Then one uses a product cutoff $\psi=\psi_1(\th)\psi_2(p)$ to calculate
  $\WF_s(\one_{\lA})$ at $(0,0)$.  Then, the Fourier transform can be
  written $\Fc\paren{\psi \one_{\lA}}=S(\nu)T(\tau)$ where
  $S(\nu)=\Oc(1/\abs{\nu})$ and $T(\tau)=\Oc(1/\abs{\tau})$.  So, the
  localized Fourier transform is decreasing of order $-1$ in the $\dpp$
  (vertical) and $\dth$ (horizontal) directions and $-2$ in all other
  directions.  

Note that $\eta_1$ and $\eta_2$ are the images of $\dpp$ and $\dth$
under the diffeomorphism back to the original coordinates.  By
contravariance of Sobolev wavefront set under diffeomorphism and the
assumption that $Rf$ is smooth and nonzero near $\thpo$,
$(\tho,\po,\pm\eta_j)\in \WF_{-1/2}(\Lambda\onea Rf)$ and, for
$s<-1/2$, $\Lambda \onea Rf$ is in $H_s$ at $(\tho,\po,\eta_j)$.
Other covectors are in $\WF_{1/2}(\Lambda\onea Rf)$.  One finishes the
proof using \eqref{app:SobolevCorrespR*}.

This proof shows for $j=1,2$ that
$C^t\circ \sparen{(\tho,\po,\eta_j)}\in \WF_0(\LA f)$, and these are the
``more singular points'' referred to after the statement of Theorem
\ref{thm:Sobolev sing}.  If one part of $\bd(A)$ is vertical at $\thpo$,
then for one value of $j$, $\eta_j$ is parallel to $\dth$ and
$C^t\circ\sparen{(\tho,\po,\eta_j)}=\emptyset$ so there is only one point,
not two, on $L\thpo$ on which $f$ is more singular.
\end{proof}

\section*{Acknowledgements}

We thank the Japan Synchrotron Radiation Research Institute for the
allotment of beam time on beamline BL20XU of SPring-8 (Proposal 2015A1147)
that provided the raw data described in Section \ref{sect:synchrotron}.
Todd Quinto thanks John Schotland and Guillaume Bal for a stimulating
discussion about multiplying distributions that relates to Remark
\ref{rem:odds and ends} and the reasons we consider only functions in this
article.  He thanks Plamen Stefanov for stimulating discussions about these
results.  He thanks the Technical University of Denmark and DTU Compute for
a wonderful semester during which this research was being done and he is
indebted to his colleagues there for stimulating, enjoyable discussions
that influenced this work.  In addition, the authors thank the funding
agencies listed at the start of the article.  Quinto also thanks the Tufts
University Deans of Arts and Sciences for their support for his semester at
DTU.

The authors thank the four referees of this article for important comments
that greatly improved the exposition and theorems, and in particular, the
statements and proofs of Theorems \ref{thm:curve} and \ref{thm:streak}.
The original version of this article was posted to the arXiv on July 4,
2017.




\end{document}